\documentclass[11pt]{article}
\usepackage{amsfonts,amssymb,amsmath}
\usepackage{dsfont}
\usepackage{indentfirst}
\usepackage{amsmath,amssymb,latexsym,mathrsfs,amsthm}
\usepackage[usenames,dvipsnames,svgnames,table]{xcolor}
\usepackage{indentfirst}
\usepackage{amsmath}
\usepackage{amssymb}
\usepackage{hyperref}
\usepackage{enumitem}

\theoremstyle{definition} 

\newtheorem{Def}{Definition}[section]

\theoremstyle{definition}
\newtheorem{Exe}[Def]{Example}
\newtheorem{Obs}[Def]{Remark}

\theoremstyle{plain}
\newtheorem{Pro}[Def]{Proposition}
\newtheorem{Cor}[Def]{Corollary}
\newtheorem{Teo}[Def]{Theorem}
\newtheorem{Lem}[Def]{Lemma}

\newtheorem{Not}[Def]{Notation}

\newcommand{\<}{\langle}
\renewcommand{\>}{\rangle}

\newcommand{\T}{\mathbb{T}}
\newcommand{\R}{\mathbb{R}}
\newcommand{\M}{\mathscr{M}}
\newcommand{\N}{\mathbb{N}}
\newcommand{\Q}{\mathbb{Q}}

\newcommand{\C}{\mathbb{C}}

\newcommand{\D}{\mathscr{D}}
\newcommand{\E}{\mathcal{E}}

\newcommand{\Z}{\mathbb{Z}}

\newcommand{\ds}{\displaystyle}
\newcommand\nnfootnote[1]{%
  \begin{NoHyper}
  \renewcommand\thefootnote{}\footnote{#1}%
  \addtocounter{footnote}{-1}%
  \end{NoHyper}
}
\numberwithin{equation}{section}

\setlength{\textheight}{22.1truecm}
\setlength{\textwidth}{17truecm}
\marginparwidth  0truecm
\oddsidemargin   -0.3truecm
\evensidemargin  -0.3truecm
\marginparsep    0truecm

\topmargin -0.1cm
%
\usepackage{color}

\definecolor{Red}{rgb}{1.00, 0.00, 0.00}
\definecolor{DarkGreen}{rgb}{0.00, 1.00, 0.00}
\definecolor{Blue}{rgb}{0.00, 0.00, 1.00}
\definecolor{Cyan}{rgb}{0.00, 1.00, 1.00}
\definecolor{Magenta}{rgb}{1.00, 0.00, 1.00}
\definecolor{DeepSkyBlue}{rgb}{0.00, 0.75, 1.00}
\definecolor{DarkGreen}{rgb}{0.00, 0.39, 0.00}
\definecolor{SpringGreen}{rgb}{0.00, 1.00, 0.50}
\definecolor{DarkOrange}{rgb}{1.00, 0.55, 0.00}
\definecolor{OrangeRed}{rgb}{1.00, 0.27, 0.00}
\definecolor{Black}{rgb}{0.00, 0.00, 0.00}
\definecolor{dark-magenta}{rgb}{.5,0,.5}
\definecolor{myblack}{rgb}{0,0,0}
\definecolor{DeepPink}{rgb}{1.00, 0.08, 0.57}
\definecolor{DarkViolet}{rgb}{0.58, 0.00, 0.82}
\definecolor{SaddleBrown}{rgb}{0.54, 0.27, 0.07}
\definecolor{darkgray}{gray}{0.5}
\definecolor{lightgray}{gray}{0.75}

\allowdisplaybreaks

\begin{document}

\title{\bf Global and Partial Fourier Series for Denjoy-Carleman Classes on Torus}
\author{Alexandre Kirilov \and Bruno de Lessa Victor }
\date{}
\maketitle

\begin{abstract}
In this work, we develop Fourier Analysis for a family of classes of ultradifferentiable functions of Romieu type on the torus, usually known as Denjoy-Carleman classes.  
Then we are able to apply our results in order to generalize some results whose proofs rely almost exclusively on Fourier Series, such as the Greenfield-Wallach Theorem. 
\end{abstract}

\section{Introduction}
\nnfootnote{
\newline{\em 2010 Mathematics Subject Classification. 	35-02, 42A16, 35H10.}
\newline{Key Words and Phrases. Ultradifferentiable Classes; Fourier Series; Global hypoellipticity.}
}

An usual extension of the space of analytic functions, the Gevrey classes have been quite studied in Mathematics, especially in partial differential equations.  Since its origin \cite{gevrey1918nature}, when Maurice Gevrey proved that the Heat Operator is locally $G^{2}$-hypoelliptic but not analytic hypoelliptic, there have been a great number of works concerning solvability and hypoellipticity in the setting of Gevrey functions and ultradistributions (see for instance \cite{baouendi1971etude}, \cite{bove2004gevrey}  and \cite{corli1989local}).

Naturally, mathematicians started to study the same type of problem on compact manifolds, especially the $N$-dimensional torus $\T^N \approx \R^N / 2 \pi \Z^N$ (see \cite{albanese2006global},\cite{cc}, \cite{gramchev1999gevrey}).  It is worth noting that in many related works (\cite{bergamasco2018global}, \cite{himonas2004gevrey}, \cite{arias}), techniques of Fourier Analysis play a fundamental role. 

Nevertheless, despite being very important for the comprehension of the gap between smooth and analytic functions, Gevrey spaces are not enough. Indeed, it is possible to prove that 
$$C^{\omega}(\T^N) \subsetneq \ds \bigcap_{s > 1} \mathcal{G}^{s}(\T^N).$$
Moreover, for each $s >1$ one can construct a non-null flat element of $\mathcal{G}^{s}(\T^N)$, a impossible feat for analytic functions, as well as for a whole family of classes of smooth functions, known as quasianalytic classes. The first example of such class  was given by Denjoy in \cite{denjoy1921fonctions}.  

That leads us to consider another generalization, the \textsl{Denjoy-Carleman Classes}. In few words, given a sequence of positive numbers $\left\{m_n \right\}_{n \in \N_0}$, we study functions $f \in \E(\T^N)$ such that 
$$|D^{\alpha} f(x)| \leq C \cdot h^{|\alpha|} \cdot m_{|\alpha|} \cdot |\alpha|!, \ \ \forall x \in \T^N, \ \forall \alpha \in \N_{0}^N.$$
It is immediate then to ask if results obtained for Gevrey spaces remain valid in this new environment. Since Fourier Series is a  crucial technique for the former case, it seems reasonable the attempt of developing a similar theory for the more general situation. That is the principal aim of the present work, which is intended to facilitate the study of partial differential equations on the torus with setting in classes of ultradifferentiable functions. 

In Section \ref{Star Wars}, we describe precisely our classes of functions, starting by what will be called \textit{weight sequences}. These sequences are absolutely fundamental: not only they characterize the growth of functions and its derivatives, but their conditions also  have a direct impact on the properties of the classes.       

We then endow our classes with a inductive limit topology in Section \ref{The Empire Strikes Back}, which make them DFS spaces. That allows us to define in Section \ref{Return of the Jedi} the spaces of Denjoy-Carleman ultradistributions. Next, we define Global Fourier Series for functions and ultradistributions, characterizing both in terms of the decay of their Fourier Coefficients.

In Section \ref{The Phantom Menace}, we develop Partial Fourier Series, a really important technique for evolution equations. Finally, in Section \ref{Revenge of the Sith} we exhibit some applications such as a extension of Greenfield-Wallach Theorem, an analysis of the Greenfield-Wallach vector field and its connections with Number Theory in our context and the characterization of global hypoellipticity for a class of systems of real vector fields in this framework. 

\section{Denjoy-Carleman Classes on Torus} \label{Star Wars} 

As discussed in Introduction, there exists a strong connection between the chosen sequences and the correspondent spaces. Hence, to define our classes of ultradifferentiable functions properly, we must impose some conditions on $\left\{m_{n} \right\}_{n \in \N_{0}}$.

\begin{Def}
A \textit{weight sequence} is a sequence of positive real numbers $\M = \left\{m_{n} \right\}_{n \in \N_{0}}$ satisfying the following conditions:

\begin{enumerate}[leftmargin=*]
	\item \textit{(Initial Conditions)}
\begin{equation}\label{Rocky}
m_{0} = m_{1} = 1. 
\end{equation}
	\item\textit{(Logarithmic Convexity)}
\begin{equation} \label{Amadeus}
m_{n}^{2} \leq m_{n-1} \cdot m_{n+1}, \ \ \forall n \in \N. 
\end{equation}	
	\item \textit{(Moderate Growth)} There exists $H > 1$ such that  	
\begin{equation} \label{Prisoners}
\ds \sup_{j,k} \left(\ds \frac{m_{j+k}}{m_{j} \cdot m_{k}} \right)^{1/(j+k)} \leq H. 
\end{equation}
\end{enumerate} 
\end{Def}

\begin{Obs}
Condition \eqref{Prisoners}, which we called \textit{"Moderate Growth"} following \cite{kriegl2011convenient} or \cite{thilliez2008quasianalytic},  is also known as \textit{"Stability Under Ultradifferentiable Operators"} in \cite{k1}. Its requirement is due to the fact that we need the \textsl{associated function} (Remark \ref{Apollo 13}) to be a \textsl{weight function} (see \cite{bonet2007comparison}, \cite{braun1990ultradifferentiable}),  which relates two different ways of defining ultradifferentiable functions. 

Technically speaking, \eqref{Prisoners} is equivalent to  Lemma \ref{Jaws}, which in its turn is a crucial tool for the proofs of results such as Theorems \ref{A Clockwork Orange}, \ref{Spider Man II}, \ref{Road to Perdition} and \ref {Butch Cassidy and the Sundance Kid}.
\end{Obs}

\begin{Obs}
Let us analyze the difference between the definition of our sequences and the ones introduced by Komatsu in \cite{k1}. In comparison, we have $m_n \cdot n! = M_n$; the initial conditions \eqref{Rocky} are not demanded in \cite{k1}, but they are essentially imposed to facilitate computations and are not a real obstruction for the theory.

Since \eqref{Prisoners} holds for the factorial sequence, then it is true for $M_n$ if and only if the same happens for $m_n$. On the other hand, 
$$\ds \frac{M_{n-1} \cdot M_{n+1}}{M_n^{2}} = \ds \frac{m_{n+1} \cdot m_{n-1} \cdot (n-1)! \cdot (n+1)!}{m_{n}^{2} \cdot (n!)^{2}} = \ds \frac{m_{n+1} \cdot m_{n-1}}{m_{n}^{2}} \cdot \ds \frac{n+1}{n}. $$
Hence, condition \eqref{Amadeus} is slightly stronger than the $(M.1)$ required by Komatsu. The reason for that will become evident later, in Proposition \ref{Social Network} and its consequences, as well as in Lemma \ref{Casablanca}.
\end{Obs}

\begin{Exe} \label{American Beauty}
Given $s \geq 1$,  $\mathscr{G}_{s} = \left\{(n!)^{s-1}  \right\}_{n \in \N_{0}}$ is a weight sequence. In fact, 

$$\left[\ds \frac{(n+1)! \cdot (n-1)!}{(n!)^2} \right]^{s-1} = \left[\ds \frac{n+1}{n} \right]^{s-1} \geq 1. $$

\noindent and it follows from binomial theorem that 

$$\left[\ds \frac{(j+k)!}{j! \cdot k!} \right]^{s-1} \leq (2^{s-1})^{j+k}. $$

\end{Exe}

\begin{Exe} \label{The Usual Suspects}
Let $\sigma \in \R$, with $\sigma > 0$. Put $\ell_{n} = \left[\log (n + e - 1) \right]^{\sigma \cdot n}$. Let us show that \eqref{Amadeus} holds:

$$\ds \frac{\ell_{n+1} \cdot \ell_{n-1}}{\ell_{n}^{2}} =\left( \ds \frac{[\log(n+e)]^{(n+1)}}{[\log(n+e-1)]^{n}}\cdot \ds \frac{ [\log(n+e-2)]^{(n-1)}}{[\log(n+e-1)]^{n}} \right)^{\sigma}$$

For $n = 1$, it is immediate. Otherwise, we define

$$f:[1, +\infty) \to \R; \ \ f(x) = \ds\frac{[\log(x+e)]^{(x+1)}}{[\log(x+e -1)]^x}.$$

\noindent By analyzing its derivative, we prove that $f$ is nondecreasing and hence $f(n) \geq f(n-1)$, for each $n \geq 2$. Therefore  

$$\ds \frac{\ell_{n+1} \cdot \ell_{n-1}}{\ell_{n}^{2}} = \left(\ds \frac{f(n)}{f(n-1)} \right)^{\sigma} \ \geq 1, \ \ \forall n \geq 2.$$

We proceed to the proof of \eqref{Prisoners}. Given $j, k \in \N$, 
\begin{align*}
\ds \frac{\ell_{j+k}}{\ell_{j} \cdot \ell_{k}} &= \left[ \ds \frac{\log (j + k + e - 1)}{\log (j + e - 1) } \right]^{\sigma j} \cdot \left[ \ds \frac{\log (j + k + e - 1)}{\log (k + e - 1) } \right]^{\sigma k} \\
&\leq \left[1 + \ds \frac{k}{j + e - 1 } \right]^{\sigma j} \cdot \left[1 + \ds \frac{j}{ k + e - 1 } \right]^{\sigma k} \\
&\leq e^{\sigma \cdot (j+k)}.
\end{align*}

\end{Exe}

\begin{Exe} \label{Vertigo}
Consider $m_{n} = \left[\log \left(\log (n + e^{e} - 1) \right) \right]^{\beta \cdot n}$, where $\beta$ is a real positive number. In order to verify \eqref{Amadeus} and \eqref{Prisoners}, the process is very similar. For the proof of former, we define

$$g:[1, +\infty) \to \R; \ \ g(x) = \ds\frac{[\log \left( \log (x+e^{e}) \right)]^{(x+1)}}{[\log \left( \log (x+e^{e} -1) \right)]^{x}}$$

\noindent and show that its derivative is always positive. On the other hand, 
\begin{align*}
\ds \frac{m_{j+k}}{m_{j} \cdot m_{k}} &= \left[ \ds \frac{\log \left(\log (j + k + e^{e} - 1) \right)}{\log \left (\log (j + e^{e} - 1) \right) } \right]^{\beta j} \cdot \left[\ds \frac{\log \left(\log (j + k + e^{e} - 1) \right)}{\log \left(\log (k + e^{e} - 1) \right) } \right]^{\beta k} \\
&\leq \left[ \ds \frac{\log (j + k + e^{e} - 1) }{\log (j + e^{e} - 1)} \right]^{\beta j} \cdot \left[\ds \frac{\log (j + k + e^{e} - 1)}{\log (k + e^{e} - 1)} \right]^{\beta k} \\
&\leq \left[1 + \ds \frac{k}{j + e^{e} - 1 } \right]^{\sigma j} \cdot \left[1 + \ds \frac{j}{ k + e^{e} - 1 } \right]^{\sigma k} \\
&\leq e^{\sigma \cdot (j+k)}.
\end{align*}

\end{Exe}

\begin{Obs}
 Given $\mathscr{L} = \left\{\ell_{k} \right\}_{k \in \N_{0}}$  and $\M = \left\{m_{k} \right\}_{k \in \N_0}$ weight sequences, $\mathscr{N} = \left\{\ell_k \cdot m_k \right\}_{k \in \N_0}$ is also a weight sequence. 
\end{Obs}

\begin{Pro} \label{Social Network}
Let $\M = \left\{m_{n} \right\}_{n \in \N_{0}}$ be a weight sequence. Then the following properties hold: 
\begin{enumerate} [leftmargin=*] 
 \item \label{Schindler's List} $\M$ is non-decreasing. 
 \item \label{Blade Runner} The sequence given by $\beta_{n} := (m_{n})^{ \frac {1}{n}}$ is non-decreasing, for $n \geq 1$. 
 \item \label{On the Waterfront} For every $k, n \in \N_{0}$ such that $k \leq n$, it follows that
$$ m_{k} \cdot m_{n-k} \leq m_{n}.$$ 
 \item \label{The Graduate} Given any $k \in \N$ there exists  $C_{\left\{k\right\}} > 1$ such that: 
\begin{equation} \label{Arrival}
\ds \sup_{j\geq 1} \left(\ds \frac{m_{j+k}}{m_{j}} \right)^{\frac{1}{j}} \leq C_{\left\{k\right\}}.  
\end{equation}
\end{enumerate}
\end{Pro}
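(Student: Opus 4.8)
The plan is to pass to logarithms. I would set $a_n := \log m_n$ and $d_n := a_n - a_{n-1}$ for $n \geq 1$. The initial conditions \eqref{Rocky} give $a_0 = a_1 = 0$, hence $d_1 = 0$; taking logarithms in \eqref{Amadeus} turns it into $2a_n \leq a_{n-1} + a_{n+1}$, i.e.\ $d_{n+1} \geq d_n$, so $(d_n)_{n\geq 1}$ is non-decreasing. Since it starts at $d_1 = 0$, every $d_n$ is $\geq 0$, which is precisely \ref{Schindler's List}. Throughout the rest I would use the telescoping identity $a_n = \sum_{i=1}^n d_i$, valid because $a_0 = 0$; in particular $m_n = e^{a_n} \geq 1$ for all $n$.

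For \ref{Blade Runner}, I would observe that $\log \beta_n = a_n/n = \tfrac1n\sum_{i=1}^n d_i$ is the arithmetic mean of the first $n$ terms of the non-decreasing sequence $(d_i)$; such running means are themselves non-decreasing in $n$, since $\tfrac1n\sum_{i=1}^n d_i \leq d_{n+1}$ (each $d_i \leq d_{n+1}$) is equivalent to $(n+1)a_n \leq n a_{n+1}$, which is equivalent to $\beta_n \leq \beta_{n+1}$. For \ref{On the Waterfront}, I would fix $0 \leq k \leq n$ and write
\[
a_n - a_k - a_{n-k} = \sum_{i=n-k+1}^{n} d_i - \sum_{i=1}^{k} d_i = \sum_{j=1}^{k}\bigl(d_{n-k+j} - d_j\bigr) \geq 0,
\]
the last inequality holding because $n-k+j \geq j$ and $(d_i)$ is non-decreasing; exponentiating gives $m_k \cdot m_{n-k} \leq m_n$.

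For \ref{The Graduate} I would invoke moderate growth \eqref{Prisoners} in the form $m_{j+k} \leq H^{j+k}\, m_j\, m_k$, which yields, for every $j \geq 1$,
\[
\left(\frac{m_{j+k}}{m_j}\right)^{1/j} \leq \bigl(H^{j+k} m_k\bigr)^{1/j} = H^{\,1+k/j}\, m_k^{1/j} \leq H^{\,1+k}\, m_k ,
\]
using $H>1$ and $m_k \geq m_0 = 1$ (from \ref{Schindler's List}); so $C_{\{k\}} := H^{1+k} m_k > 1$ works.

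None of the steps is really an obstacle: the only place that needs a moment's care is \ref{On the Waterfront}, where one must line up the two blocks of $k$ consecutive differences in the correct order so that the termwise comparison applies. It is worth noting for the exposition that \ref{Schindler's List}, \ref{Blade Runner} and \ref{On the Waterfront} use only \eqref{Rocky} and \eqref{Amadeus}, whereas \ref{The Graduate} is exactly the point at which condition \eqref{Prisoners} is genuinely needed.
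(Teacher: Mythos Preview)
Your proof is correct. For \ref{Schindler's List}--\ref{On the Waterfront} you and the paper do essentially the same thing: pass to $\log m_n$ and exploit convexity, the only difference being that you work with the increments $d_n$ explicitly while the paper states the equivalent inequality $\omega_j \leq \tfrac{j}{j+k}\,\omega_{j+k}$ and leaves the induction to the reader.

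For \ref{The Graduate} the two arguments genuinely diverge. The paper first extracts from \eqref{Prisoners} the single-step bound $m_{j+1}/m_j \leq H^{2j}$ and then telescopes $m_{j+k}/m_j$ into a product of $k$ consecutive ratios, arriving at $C_{\{k\}} = H^{2(k^2+k)}$. You instead apply \eqref{Prisoners} in one shot to the pair $(j,k)$, obtaining $(m_{j+k}/m_j)^{1/j} \leq H^{1+k} m_k$. Your route is shorter and yields a sharper constant (polynomial exponent $1+k$ in $H$ rather than quadratic $2(k^2+k)$, at the price of the harmless factor $m_k$), though for the applications in the paper only the existence of \emph{some} $C_{\{k\}}$ matters. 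Your closing remark that \ref{Schindler's List}--\ref{On the Waterfront} need only \eqref{Rocky} and \eqref{Amadeus} while \ref{The Graduate} is where \eqref{Prisoners} enters is a useful observation not made explicit in the paper.
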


\begin{proof}
To prove \ref{Schindler's List}, \ref{Blade Runner} and \ref{On the Waterfront},  we define $\omega_{n} = \log m_{n}$ and apply \eqref{Rocky}, \eqref{Amadeus} to show by induction that 
$$\omega_{j} \leq \ds \frac{j}{j+k} \cdot  \omega_{j+k}, \ \  \forall k \in \N_{0}, \ \forall j \in \N. $$

On the other hand, by \eqref{Prisoners} we have that 
$$ \left(\ds \frac{m_{j+1}}{m_{j}} \right) \leq H^{2j}, \ \ \forall j \in \N_{0}. $$
Therefore,
$$\left(\ds \frac{m_{j+k}}{m_{j}} \right)= \left(\ds \frac{m_{j+1}}{m_{j}} \right) \cdot \left(\ds \frac{m_{j+2}}{m_{j+1}} \right) \cdot \ldots \cdot \left(\ds \frac{m_{j+k}}{m_{j+k-1}} \right) \leq H^{2[j + (j+1) + \ldots + (j+k-1)]} \leq H^{2j \cdot (k^{2} + k)}, \ \ \forall j \geq 1.$$
Thus we complete the proof of \ref{The Graduate} by taking $C_{\left\{k\right\}} = H^{2 \left(k^{2} + k \right)}$.

\end{proof}

\begin{Obs}
Applying Proposition \ref{Social Network}  for the particular case where $s=2$ in the Example \ref{American Beauty}, we obtain for any $k \in \N$ the existence of  $B_{\left\{k \right\}}$ such that

\begin{equation} \label{The Seventh Seal}
\left[\ds \frac{(j+k)!}{j!} \right]^{\frac{1}{j}} \leq B_{\left\{k \right\}}. 
\end{equation}

\end{Obs}

\begin{Def}
Let $\M = \left\{m_{n} \right\}_{n \in \N}$ be a weight sequence. We say that a function $f \in \E(\T^N)$ is \textbf{ultradifferentiable of class $\left\{\M \right\}$} if there exist constants $C, h > 0$ such that, for every $\alpha \in \N_{0}^{N}$,

$$|D^{\alpha}f(x)| \leq C \cdot h^{|\alpha|} \cdot m_{|\alpha|} \cdot  |\alpha|!, \ \ \ \forall x \in \T^N.$$

\end{Def}

\begin{Not}
We denote the space of ultradifferentiable functions of class $\left\{\M \right\}$ defined above as $\E_{\M}(\T^N)$. We call $\E_{\M}(\T^N)$ the \textit{Denjoy-Carleman Class on $\T^{N}$ associated to $\M$.}  
\end{Not}

It is proved in \cite{thilliez2008quasianalytic}, using a construction of Bang in \cite{bang1946om}, that for each weight sequence $\M$ the set $\E_{\M}(\T^N)$ is not trivial. It is very easy to check that it is also a vector subspace of $\E(\T^N)$. Moreover, each statement proved in the Proposition \ref{Social Network} plays a fundamental role in properties of the Denjoy-Carleman Classes associated. For instance, the fact that a weight sequence is non-decreasing (\ref{Schindler's List}) tell us that $m_n \geq 1$ and therefore
$$C^{\omega}(\T^N) \subset \E_{\M}(\T^N). $$ 

It follows from \ref{Blade Runner} that $\E_{\M}(\T^N)$ is always closed for composition and inverse-closed (see \cite{bierstone2004resolution}, \cite{rudin1962division}). By \ref{On the Waterfront} and \ref{The Graduate}, it is not difficult to show that the classes are a subalgebra of $\E(\T^N)$ and closed by differentiation, respectively.

Another important subject related to Denjoy-Carleman classes, the one which actually originated its study, is the existence of flat functions.  

\begin{Def}
Let $\M$ be a weight sequence and $\E_{\M}(\T^N)$ its class of ultradifferentiable functions associated. We say that $\E_{\M}(\T^N)$ is \textbf{quasianalytic} if

$$f \in \E_{\M}(\T^N), \ x_{0} \in \T^N  \ \big{|} \ D^{\alpha}f (x_{0}) = 0, \ \forall \alpha \in \N_{0}^N \ \Rightarrow f \equiv 0.$$
\end{Def}

The characterization for quasianalyticity was first given partially by Denjoy (\cite{denjoy1921fonctions}) and then completely by Carleman (\cite{carleman1926fonctions}). We recommend \cite{hormander2003analysis}, \cite{rudin2006real} for a proof. 

\begin{Teo}
Let $\M = \left\{m_{n} \right\}_{n \in \N_{0}}$ be a weight sequence. The Denjoy-Carleman class $\E_{\M}(\T^N)$ associated is quasianalytic if and only if 

$$\ds \sum_{j = 0}^{+ \infty} \ds \frac{m_j}{m_{j+1} \cdot (j+1)} = + \infty. $$
\end{Teo}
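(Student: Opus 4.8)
The plan is to reduce the statement to the classical Denjoy--Carleman theorem as stated for Komatsu-type sequences $M_n = m_n \cdot n!$, and then to translate the divergence condition across the two normalizations. First I would recall that the classical theorem (as in Hörmander or Rudin) asserts that the Carleman class defined by a logarithmically convex sequence $\{M_n\}$ is quasianalytic if and only if $\sum_{j} M_j / M_{j+1} = +\infty$, equivalently (using log-convexity) if and only if $\sum_n (M_n / M_{n+1}) = +\infty$ or, in the more familiar phrasing, $\sum_n 1/(M_n)^{1/n} = +\infty$. Since the definition of $\E_{\M}(\T^N)$ uses exactly the bound $|D^\alpha f| \le C h^{|\alpha|} m_{|\alpha|} |\alpha|! = C h^{|\alpha|} M_{|\alpha|}$, the class $\E_{\M}(\T^N)$ coincides with the Carleman class associated to $\{M_n\}$, so the classical statement applies verbatim once we check $\{M_n\}$ is log-convex --- which follows from the Remark in the excerpt comparing \eqref{Amadeus} with Komatsu's $(M.1)$, since our condition is strictly stronger. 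One subtlety: the classical theorem is usually phrased for functions on an interval in $\R$, while here we work on $\T^N$; I would note that quasianalyticity on $\T^N$ is equivalent to quasianalyticity in one real variable on a subinterval (restrict to a coordinate segment, or cite that the multidimensional Denjoy--Carleman theorem reduces to the one-dimensional one via directional restrictions), so nothing new is needed.

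The second and main step is the purely arithmetic equivalence
\[
\sum_{j=0}^{\infty} \frac{M_j}{M_{j+1}} = +\infty
\quad\Longleftrightarrow\quad
\sum_{j=0}^{\infty} \frac{m_j}{m_{j+1}\,(j+1)} = +\infty.
\]
This is immediate: $M_j / M_{j+1} = (m_j \cdot j!)/(m_{j+1} \cdot (j+1)!) = m_j / (m_{j+1}(j+1))$, so the two series are literally term-by-term equal. Hence there is essentially nothing to prove beyond bookkeeping. I would write this identity out explicitly to make the correspondence with the stated form transparent.

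The part that needs a little care, and which I expect to be the only genuine obstacle, is making sure we invoke the correct form of the classical theorem and that its hypotheses are met. The cleanest version requires only logarithmic convexity of $\{M_n\}$ (no moderate growth, no initial conditions), and states equivalence of quasianalyticity with $\sum M_j / M_{j+1} = \infty$; I would cite \cite{hormander2003analysis} or \cite{rudin2006real} for precisely this. One should check that log-convexity of $\{M_n\}$ does hold: from \eqref{Amadeus} we get $m_n^2 \le m_{n-1} m_{n+1}$, and multiplying by $(n!)^2 \le (n-1)!\,(n+1)!$ (valid since $(n!)^2/((n-1)!(n+1)!) = n/(n+1) \le 1$) yields $M_n^2 \le M_{n-1} M_{n+1}$. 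Also $M_0 = M_1 = 1$ from \eqref{Rocky}. So all hypotheses of the classical statement are satisfied, and combining it with the term-by-term identity above gives the theorem. In short: the proof is "translate normalization, cite Denjoy--Carleman," and the only thing to be careful about is quoting a version of the classical result whose hypotheses we have genuinely verified (log-convexity of $M_n$), and noting the trivial passage from $\T^N$ to the one-dimensional setting.
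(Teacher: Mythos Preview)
Your proposal is correct and matches the paper's approach exactly: the paper does not prove this theorem at all but simply states it and refers the reader to \cite{hormander2003analysis} and \cite{rudin2006real} for a proof. Your write-up in fact goes further than the paper by spelling out the bookkeeping (the identity $M_j/M_{j+1} = m_j/(m_{j+1}(j+1))$ and the verification that $\{M_n\}$ is log-convex), which the paper leaves entirely implicit.
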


Let us discuss a little about the examples set before. The classes associated to the sequences defined in the Example \ref{American Beauty} are the famous \textbf{Gevrey Classes}, which are usually denoted as $\mathcal{G}^{s}(\T^N)$. When $s = 1$, we obtain the class of analytic functions, that is obviously a quasianalytic class. If $s > 1$, the classes are non-quasianalytic. 

The classes associated to the Examples \ref{The Usual Suspects} are very important for comprehension of the gap between smooth functions and analytic functions, since the particular case where $\sigma = 1$ it represents the intersection of all inverse-closed non-quasianalytic classes (\cite{rudin1962division}). Besides, it is quasianalytic for $0 < \sigma \leq 1$ and non-quasianalytic for $1 < \sigma$ (\cite{thilliez2008quasianalytic}).

The Example \ref{Vertigo} is very relevant historically, since the particular case where $\beta = 1$ was introduced by Denjoy in \cite{denjoy1921fonctions} as a first example of class of quasianalytic functions which were nowhere analytic. 

Before the end of the section, we ask the following question: given $\mathscr{L}, \M$ weight sequences, when $\E_{\mathscr{L}} (\T^N) \subset \E_{\M}(\T^N)$? The answer is onece again in assymptotic behavior of the sequences.  

\begin{Def}
Let $\mathscr{L}, \M$ be weight sequences. Then 
\begin{equation} \label{The Godfather}
\mathscr{L} \preceq \M \ \Leftrightarrow \ds \sup_{k \in \N_{0}} \left(\ds \frac{\ell_k}{m_k} \right)^{\frac{1}{k}} < + \infty.  
\end{equation}

\noindent Moreover, we write $\mathscr{L} \approx \M$ when $\mathscr{L} \preceq \M$ and $\M \preceq \mathscr{L}$.  
\end{Def}

It is possible to prove (see \cite{thilliez2008quasianalytic}) that 
\begin{equation} \label{The Godfather II}
\E_{\mathscr{L}}(\T^N) \subset \E_{\M} (\T^N) \Leftrightarrow \mathscr{L} \preceq \M.  
\end{equation}

\noindent In particular, we have $\E_{\mathscr{L}}(\T^N) = \E_{\M}(\T^N) = $ if and only if $\mathscr{L} \approx \M$. This shows, for instance,  that $r < s$ implies thar $\mathcal{G}^{r} (\T^N) \subsetneq \mathcal{G}^{s} (\T^N)$ and  $C^{\omega}(\T^N) = \E_{\M}(\T^N)$ if and only if $\ds \sup_{j\geq 1} \ (m_j)^{\frac{1}{j}} < +\infty$.

\begin{Obs}
From now on, we are going to fix some  arbitrary weight sequence $\M$ and to develop our whole theory based on it.  
\end{Obs}

\section[Topology of Ultradifferentiable Functions]{The topology of  $\E_{\M}(\T^N)$} \label{The Empire Strikes Back}

\begin{Def} \label{Ocean's Eleven}
Given $h> 0$, we define

$$\E_{\M, h}(\T^N) = \left\{f \in \E_{\M}(\T^N); \ \underset{\alpha \in \N_{0}^{N}}{\ds \sup_{{x \in \T^N}}} \left| \ds \frac{D^{\alpha} f(x)}{h^{|\alpha|} \cdot m_{|\alpha|} \cdot |\alpha|!}  \right|. < \infty \right\}.$$ 
\end{Def}

\noindent Moreover, we denote for $f \in \E_{\M, h}(\T^N)$

$$\left\|f \right\|_{\M, h}:= \underset{\alpha \in \N_{0}^{N}}{\ds \sup_{{x \in \T^N}}} \left| \ds \frac{D^{\alpha} f(x)}{h^{|\alpha|} \cdot m_{|\alpha|} \cdot |\alpha|!}  \right|.$$

One can easily show that $\left\|. \right\|_{\M, h}$ is a norm in $\E_{\M, h}(\T^N)$ and that in this case it is a Banach space.

\begin{Pro} \label{Adaptation}
Let $h_{1}, h_{2} \in  \R_{+}$, with $h_{1} < h_{2}$. Thus $\E_{\M, h_{1}}(\T^N) \subset \E_{\M, h_{2}}(\T^N)$ and the inclusion $\E_{\M, h_{1}}(\T^N) \hookrightarrow \E_{\M, h_{2}}(\T^N)$ is compact.
\end{Pro}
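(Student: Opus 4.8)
The plan is to prove the inclusion first and then upgrade it to compactness via an Arzelà--Ascoli / diagonal argument on Fourier-type bounds. For the inclusion, take $f \in \E_{\M, h_1}(\T^N)$. Then for every multi-index $\alpha$ and every $x$ we have $|D^\alpha f(x)| \le \|f\|_{\M,h_1}\, h_1^{|\alpha|} m_{|\alpha|} |\alpha|!$. Dividing by $h_2^{|\alpha|} m_{|\alpha|}|\alpha|!$ and using $h_1 < h_2$ gives
$$\left| \frac{D^\alpha f(x)}{h_2^{|\alpha|} m_{|\alpha|}|\alpha|!} \right| \le \|f\|_{\M,h_1} \left(\frac{h_1}{h_2}\right)^{|\alpha|} \le \|f\|_{\M,h_1},$$
so $f \in \E_{\M,h_2}(\T^N)$ with $\|f\|_{\M,h_2} \le \|f\|_{\M,h_1}$; the inclusion map is thus linear and continuous with norm at most $1$.

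For compactness, I would take a bounded sequence $(f_j)$ in $\E_{\M,h_1}(\T^N)$, say $\|f_j\|_{\M,h_1} \le A$ for all $j$, and extract a subsequence converging in $\E_{\M,h_2}(\T^N)$. The point is that the bound in $\E_{\M,h_1}$ controls all derivatives, so for each fixed $\alpha$ the family $\{D^\alpha f_j\}_j$ is uniformly bounded and equicontinuous on the compact manifold $\T^N$ (equicontinuity of $D^\alpha f_j$ follows from the uniform bound on $D^\beta f_j$ for $|\beta| = |\alpha|+1$). By Arzelà--Ascoli plus a diagonal argument over the countably many $\alpha \in \N_0^N$, I can pass to a subsequence, still denoted $(f_j)$, such that $D^\alpha f_j$ converges uniformly on $\T^N$ for every $\alpha$; call the limit $g$ (which is then in $\E(\T^N)$, with $D^\alpha f_j \to D^\alpha g$ uniformly, and $\|g\|_{\M,h_1}\le A$, so $g \in \E_{\M,h_1}(\T^N) \subset \E_{\M,h_2}(\T^N)$).

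It remains to show $\|f_j - g\|_{\M,h_2} \to 0$. Fix $\varepsilon > 0$. Since $\|f_j - g\|_{\M,h_1} \le 2A$, for any multi-index with $|\alpha| = n$ we get
$$\left| \frac{D^\alpha(f_j - g)(x)}{h_2^{n} m_{n} n!} \right| \le 2A \left(\frac{h_1}{h_2}\right)^{n},$$
and choosing $N_0$ so large that $2A (h_1/h_2)^{N_0} < \varepsilon$ handles all $\alpha$ with $|\alpha| \ge N_0$ uniformly in $j$. For the finitely many $\alpha$ with $|\alpha| < N_0$, uniform convergence $D^\alpha f_j \to D^\alpha g$ on $\T^N$ makes $\sup_x |D^\alpha(f_j-g)(x)| / (h_2^{|\alpha|} m_{|\alpha|}|\alpha|!)$ smaller than $\varepsilon$ for $j$ large. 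Taking the sup over all $\alpha$ gives $\|f_j - g\|_{\M,h_2} \le \varepsilon$ for $j$ large, proving convergence in $\E_{\M,h_2}(\T^N)$. Hence the inclusion is compact.

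The main obstacle — really the only subtle point — is justifying the Arzelà--Ascoli extraction simultaneously for \emph{all} derivatives and checking that the uniform limits of the $D^\alpha f_j$ are genuinely the derivatives of a single limit function $g$; this is the standard fact that uniform convergence of functions together with uniform convergence of their first derivatives implies the limit is differentiable with the expected derivative, applied inductively (and cleanly on the compact torus, where no boundary issues arise). Everything else is the elementary geometric-series estimate exploiting $h_1 < h_2$, which is what forces the tail in $|\alpha|$ to be uniformly small.
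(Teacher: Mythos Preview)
Your proof is correct and follows essentially the same approach as the paper's: extract a subsequence converging in $\E(\T^N)$ via Arzel\`a--Ascoli, then split the supremum over $\alpha$ into a finite low-order part (controlled by uniform convergence) and a tail (controlled by the geometric factor $(h_1/h_2)^{|\alpha|}$). The only cosmetic differences are that you spell out the inclusion and the diagonal extraction more explicitly, and you note $g\in\E_{\M,h_1}$ rather than just $\E_{\M,h_2}$, but the argument is the same.
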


\begin{proof}
We are only going to prove the compactness: let $\left\{f_{n} \right\}_{n \in \N}$ be a bounded sequence in $\E_{\M, h_{1}}(\T^N)$. Hence, there exists $C_{0} > 0$ such that:
$$|D^{\alpha}f_{n}(x)| \leq C_{0} \cdot h_{1}^{|\alpha|} \cdot m_{|\alpha|} \cdot |\alpha|!, \ \ \forall x \in \T^N, \ \forall \alpha \in \N_{0}^{N}, \ \ \forall n \in \N. $$
Thus for every $k \in \N_{0}$ there exists $C_{k} > 0$ such that

$$ \ds \sum_{|\beta| \leq k} \ds \sup_{x \in \T^N} |D^{\beta} f_{n}(x)| \leq C_{k}, \ \forall n \in \N.$$

It follows from Arzel\`a-Ascoli Theorem the existence of a subsequence $\left\{f_{n_{k}} \right\}_{k \in \N}$ that converges to $f$ in $\E(\T^N)$. Moreover, for any $x \in \T^N$, $\gamma \in \N_{0}^{N}$, 

$$|D^{\gamma}f(x)| = \left|\ds \lim_{k \to + \infty} D^{\gamma}f_{n_{k}}(x) \right| \leq C_{0} \cdot h_{2}^{|\gamma|} \cdot m_{|\gamma|} \cdot |\gamma|!,$$

\noindent Thus $f \in \E_{\M, h_{2}}(\T^N)$. It leaves for us to prove that $f_{n_{k}} \to f$ in $\E_{\M, h_{2}}(\T^N)$.

Given $\varepsilon > 0$, we take $p \in \N$ such that $\left(\ds \frac{h_{1}}{h_{2}} \right)^{p} \leq \ds \frac{\varepsilon}{2C_{0}}$ and $C_{1}:= \ds \max \left\{\ds \frac{1}{h_{2}^{q} \cdot m_{q} \cdot q!}; \  0 \leq q \leq p \right\}.$ Due to the fact that $f_{n_{k}} \to f$ in $\E(\T^N)$, it is possible to find $k_{1} \in \N$ such that

$$\ds \sup_{x \in \T^N} \left|D^{\lambda}f_{n_{k}}(x) - D^{\lambda}f(x) \right| \leq \ds \frac{\varepsilon}{C_{1}}, \ \  |\lambda| \leq p, \ \  k \geq k_{1}.$$  

Let $k \geq k_{1}$; if $|\lambda| \leq p$,   

$$\ds \sup_{x \in \T^N} \ds \frac{\left|D^{\lambda}f_{n_{k}}(x) - D^{\lambda}f(x) \right|}{h_{2}^{|\lambda|} \cdot m_{|\lambda|} \cdot |\lambda|!} \leq \ds \frac{\varepsilon}{C_{1}} \cdot C_{1} = \varepsilon. $$

\noindent When $|\lambda| > p,$

\begin{align*}
\ds \sup_{x \in \T^N} \ds \frac{\left|D^{\lambda}f_{n_{k}}(x) - D^{\lambda}f(x) \right|}{h_{2}^{|\lambda|} \cdot m_{|\lambda|} \cdot |\lambda|!} &\leq \left[\ds \sup_{x \in \T^N} \ds \frac{\left|D^{\lambda}f_{n_{k}}(x) - D^{\lambda}f(x) \right|}{h_{1}^{|\lambda|} \cdot m_{|\lambda|} \cdot |\lambda|!}\right] \cdot \left(\ds \frac{h_{1}}{h_{2}} \right)^{p} \\
&\leq \left[\left\|f_{n_{k}} \right\|_{\M, h_{1}} + \left\|f \right\|_{\M, h_{1}} \right] \cdot \ds \frac{\varepsilon}{2C_{0}} \\
&\leq \varepsilon. 
\end{align*}

\noindent Therefore $\left\|f_{n_{k}} - f \right\|_{\M, h_{2}} \leq \varepsilon$ for all $k \geq k_{1}$, which shows that $f_{n_{k}} \to f$ in $\E_{\M, h_{2}}(\T^N)$. 
\end{proof}

Let $\left\{h_{n} \right\}_{n \in \N}$ be a strictly increasing sequence of positive real numbers, such that $h_{n} \to + \infty$. It is not difficult to see that 
$$\E_{\M}(\T^N) = \bigcup_{n \in \N} \E_{\M, h_{n}}(\T^N).$$
We endow $\E_{\M}(\T^N)$ with the inductive limit topology given by the family of $\E_{\M, h_{n}}(\T^N)$. That is, 

$$\E_{\M}(\T^N) = \ds \lim_{\stackrel{\longrightarrow}{n \in \N}} \E_{\M, h_{n}}(\T^N). $$

It follows from Proposition \ref{Adaptation} that $\E_{\M}(\T^N)$ is a \textit{injective limit of a compact sequence  of locally convex spaces}, also known in the literature as a \textbf{DFS} space.  For more details, see \cite{k2}.

\begin{Obs}
The topology introduced above does not depend on the choice of $\left\{h_{n} \right\}_{n \in \N}$.
\end{Obs}

\section{Ultradistributions and Fourier Series} \label{Return of the Jedi}

\begin{Def} 
We define $\D'_{\M}(\T^N)$ as the \textbf{topological dual space of $\E_{\M}(\T^N)$}. That is,  the space of continuous linear functionals $u:\E_{\M}(\T^N) \to \C$ .
\end{Def}

\begin{Teo} \label{The Hurt Locker} 
Let $u: \E_{\M}(\T^N) \to \C$ be a linear functional. The following properties are equivalent:

\begin{enumerate} [leftmargin=*]
	\item \label{Scarface} $u\in \D'_{\M}(\T^N)$. 
	\item \label {Mission Impossible} For every $\varepsilon >0$, there exists $C_\varepsilon > 0$ such that
	
\begin{equation} \label{Minority Report}	
\left|\<u, \varphi \> \right| \leq C_\varepsilon \cdot \underset{\alpha \in \N_{0}^{N}}{\ds \sup_{{x \in \T^N}}} \left( \ds \frac{|\partial^\alpha \varphi (x)| \cdot \varepsilon^{|\alpha|}} {m_{|\alpha|} \cdot |\alpha|!} \right), \ \ \ \ \ \forall \varphi \in \E_\M(\T^N).
\end{equation}

  \item \label{The Untouchables} If $\left\{\varphi_n \right\}_{n \in \N} \subset \E_{\M}(\T^N)$ converges to $0$ in $\E_{\M}(\T^N)$, then $\<u, \varphi_{n} \> \to 0. $
\end{enumerate}
\end{Teo}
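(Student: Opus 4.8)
The plan is to prove the chain of implications $\ref{Scarface} \Rightarrow \ref{Mission Impossible} \Rightarrow \ref{The Untouchables} \Rightarrow \ref{Scarface}$, which is the natural route since the middle condition is the quantitative statement that does the real work, while the two endpoints are the abstract definition of continuity and sequential continuity. The key technical input I would use throughout is that $\E_\M(\T^N)$ is a DFS space (in particular a bornological space), which guarantees that continuity on the inductive limit is equivalent to continuity on each step $\E_{\M,h}(\T^N)$, and also that sequential continuity implies continuity.

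First I would prove $\ref{Scarface} \Rightarrow \ref{Mission Impossible}$. Assume $u$ is continuous on $\E_\M(\T^N)$. By the universal property of the inductive limit topology, for each $n$ the restriction $u|_{\E_{\M,h_n}(\T^N)}$ is continuous on the Banach space $\E_{\M,h_n}(\T^N)$, hence there is a constant $A_n>0$ with $|\langle u,\varphi\rangle| \le A_n \|\varphi\|_{\M,h_n}$ for all $\varphi\in\E_{\M,h_n}(\T^N)$. Now given $\varepsilon>0$, choose $n$ with $h_n \le \varepsilon^{-1}$ (possible since $h_n\to+\infty$); wait — I need the direction that makes the $\varepsilon^{|\alpha|}$ factor control things, so in fact I choose $n$ with $1/h_n \ge \varepsilon$, i.e. $h_n \le 1/\varepsilon$. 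Then for any $\varphi\in\E_\M(\T^N)$, if the right-hand side of \eqref{Minority Report} is finite (which it is, since $\varphi$ is ultradifferentiable), $\varphi$ lies in $\E_{\M,h_n}(\T^N)$ and
$$
\|\varphi\|_{\M,h_n} = \sup_{\alpha,x}\frac{|\partial^\alpha\varphi(x)|}{h_n^{|\alpha|} m_{|\alpha|}|\alpha|!} \le \sup_{\alpha,x}\frac{|\partial^\alpha\varphi(x)|\,\varepsilon^{|\alpha|}}{m_{|\alpha|}|\alpha|!},
$$
using $h_n^{-|\alpha|} \le \varepsilon^{|\alpha|}$. Setting $C_\varepsilon = A_n$ gives \eqref{Minority Report}. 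I should be slightly careful: the estimate \eqref{Minority Report} is stated for all $\varphi\in\E_\M(\T^N)$, and every such $\varphi$ does belong to some $\E_{\M,h_m}(\T^N)$ and hence, once $h_m \le h_n$ or by monotonicity, to $\E_{\M,h_n}(\T^N)$ with the displayed bound; if $h_m > h_n$ one still has the bound because the supremum defining the $\M,h_n$-norm could a priori be infinite, but here it is dominated by the finite right-hand side, so $\varphi\in\E_{\M,h_n}(\T^N)$ anyway. This little point about membership versus the a priori index of $\varphi$ is the one place requiring care.

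Next, $\ref{Mission Impossible} \Rightarrow \ref{The Untouchables}$ is essentially immediate: if $\varphi_n\to 0$ in $\E_\M(\T^N)$, then (by definition of convergence in a DFS space, or by the Grothendieck factorization / Dieudonné–Schwartz theorem) the sequence lies in a fixed step $\E_{\M,h}(\T^N)$ and $\|\varphi_n\|_{\M,h}\to 0$. Pick $\varepsilon$ with $\varepsilon \le 1/h$; then the right-hand side of \eqref{Minority Report} is bounded by $\|\varphi_n\|_{\M,h}$, so $|\langle u,\varphi_n\rangle| \le C_\varepsilon \|\varphi_n\|_{\M,h} \to 0$. Finally, $\ref{The Untouchables}\Rightarrow\ref{Scarface}$ follows because $\E_\M(\T^N)$, being a DFS space, is in particular bornological (indeed metrizable-like in the sense that sequential continuity of linear maps suffices): a linear functional that is sequentially continuous on a bornological locally convex space is continuous. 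Alternatively, one invokes that in a DFS space convergent sequences characterize the topology for the purpose of testing continuity of linear maps, a standard fact cited to \cite{k2}. The main obstacle is not any single estimate but rather marshalling the correct functional-analytic facts about DFS spaces — specifically that the inductive limit is regular (bounded sets and convergent sequences sit inside a single step) and bornological — and making sure the quantifier "for every $\varepsilon$" in \eqref{Minority Report} is matched correctly against "there exists $n$ with $h_n$ large", which is where the inequality direction $h_n^{-1}\le\varepsilon$ must be gotten right.
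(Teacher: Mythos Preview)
Your overall strategy is sound and differs from the paper's: where the paper proves both $\ref{Scarface}\Rightarrow\ref{Mission Impossible}$ and $\ref{The Untouchables}\Rightarrow\ref{Scarface}$ by contradiction (in each case constructing an explicit normalized sequence witnessing unboundedness of $u$ on some step $\E_{\M,h}(\T^N)$), you argue directly, invoking the universal property of the inductive limit for the first implication and the fact that DFS spaces are bornological for the last. Your route is cleaner once the functional-analytic background is granted; the paper's route is more elementary and self-contained, needing no appeal to general facts about bornological spaces.

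Two points need repair in your $\ref{Scarface}\Rightarrow\ref{Mission Impossible}$. First, the inequality direction is stated inconsistently: you write ``choose $n$ with $1/h_n\ge\varepsilon$, i.e.\ $h_n\le 1/\varepsilon$'', but the displayed bound $h_n^{-|\alpha|}\le\varepsilon^{|\alpha|}$ you then use requires the opposite, namely $h_n\ge 1/\varepsilon$ (which is what is always available since $h_n\to\infty$). Your concluding remark that ``$h_n^{-1}\le\varepsilon$ must be gotten right'' has the correct direction, so this is a writeup slip rather than a conceptual gap. Second, the parenthetical ``(which it is, since $\varphi$ is ultradifferentiable)'' is false: for a general $\varphi\in\E_\M(\T^N)$ the right-hand side of \eqref{Minority Report} equals $\|\varphi\|_{\M,1/\varepsilon}$, which may well be infinite if $\varphi$ only lies in some $\E_{\M,h}$ with $h>1/\varepsilon$. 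This does not harm the argument---when the right-hand side is infinite the inequality is vacuous---but you should say it that way rather than assert finiteness.
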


\begin{proof}

\noindent $\eqref{Scarface} \Rightarrow \eqref{Mission Impossible}:$ If $\eqref{Mission Impossible}$ does not hold, there exists $\varepsilon_0 >0$ and $\left\{\varphi_{n} \right\}_{n \in \N} \subset \E_{\M}(\T^N)$ such that
$$\left|\<u, \varphi_{n} \> \right| > n \cdot \underset{\alpha \in \N_{0}^{N}}{\ds \sup_{{x \in \T^N}}} \left( \ds \frac{|\partial^\alpha \varphi_{n} (x)| \cdot \varepsilon_{0}^{|\alpha|}} {m_{|\alpha|} \cdot |\alpha|!} \right).$$
By putting $\Psi_n = \ds \frac{\varphi_{n}}{| \<u, \varphi_{n} \>|}$, we get that 
$$1 >  n \cdot \underset{\alpha \in \N_{0}^{N}}{\ds \sup_{{x \in \T^N}}}  \left(\ds \frac{|\partial^\alpha \Psi_{n} (x)| \cdot \varepsilon_{0}^{|\alpha|}} {m_{|\alpha|} \cdot |\alpha|!} \right)  \Rightarrow \underset{\alpha \in \N_{0}^{N}}{\ds \sup_{{x \in \T^N}}} \left(  \ds \frac{|\partial^\alpha \Psi_{n} (x)| \cdot \varepsilon_{0}^{|\alpha|}} {m_{|\alpha|} \cdot |\alpha|!} \right) < \ds \frac{1}{n}. $$

Hence

$$|\partial^{\alpha} \Psi_{n}(x)| \leq \left(\ds \frac{1}{\varepsilon_{0}} \right)^{|\alpha|} \cdot m_{|\alpha|} \cdot |\alpha|!, \ \ \ \ \forall x \in \T^N, \ \  \forall \alpha \in \N_{0}^{N},$$

\noindent which allows us to deduce that $\left\{\Psi_{n} \right\}_{n \in \N} \subset \E_{\M, {1/\varepsilon_{0}}}(\T^N)$. Thus 

$$|\<u, \Psi_n \>| \geq n \left\|\Psi_{n} \right\|_{\M, 1/\varepsilon_{0}}, \ \ \ \forall n \in \N, $$

\noindent and $u \Big{|}_{\E_{\M, 1/\varepsilon_{0}}(\T^N)}$ is not continuous. So $u$ is not continuous.  

\

\noindent $\eqref{Mission Impossible} \Rightarrow \eqref{The Untouchables}:$ Suppose $\left\{\varphi_n \right\}_{n \in \N} \subset \E_{\M}(\T^N)$ a sequence that converges to $0$ in the same space. By a property of DFS spaces,  there exists $p \in \N$ such that $(\varphi_n)_{n \in \N} \subset \E_{\M, h_p}(\T^N)$ and $ (\varphi_n)_{n \in \N} \to 0$ in  $\E_{\M, h_p}(\T^N)$. If we define $\varepsilon = \ds \frac{1}{h_p}$, by hypothesis one can find $C>0$ satisfying

$$|\<u, \varphi_n \>|  \leq C \cdot \underset{\alpha \in \N_{0}^{N}}{\ds \sup_{{x \in \T^N}}} \left(  \ds \frac{|\partial^\alpha \varphi_n (x)|} {(h_p)^{|\alpha|} \cdot m_{|\alpha|} \cdot |\alpha|!} \right) \leq C \cdot \left\| \varphi_n \right\|_{\M, h_p}.$$

\noindent Since $\left\| \varphi_n \right\|_{\M, h_p} \to 0$, we have that $|\<u, \varphi_n \>| \to 0$.

\

\noindent $\eqref{The Untouchables} \Rightarrow \eqref{Scarface}:$ Assume that $u \notin \D'_{\M}(\T^N)$; then there exists $q \in \N$ such that $u \big{|}_{\E_{\M, h_q}(\T^N)}$ is not continuous. Thus for every $j \in \N$ we obtain $\varphi_j \in \E_{\M, h_q}(\T^N)$

$$|\<u, \varphi_j \>| > j \cdot \left\| \varphi_j \right\|_{\M, h_q}.$$

Let $\Psi_j := \ds \frac{\varphi_j}{|\<u, \varphi_j \>|}$; then 
$$| \<u, \Psi_j \>| = 1,  \ \ \ \left\| \Psi_j \right\|_{\M, h_q} = \ds \frac{\left\| \varphi_j \right\|_{\M, h_q}}{|\<u, \varphi_j \>|} < \ds \frac{1}{j}, \ \ \ \ \forall j \in \N. $$
Hence, $\left\{\Psi_j \right\}_{j \in \N} \to 0$ in $\E_{\M}(\T^N)$, whereas $\<u, \Psi_j \> \nrightarrow 0$. 
\end{proof}

\

\begin{Def}
Let $\varphi \in \E_{\M}(\T^N)$; for each $\xi \in \Z^N$ we define its Fourier coefficient as:

$$\hat{\varphi}(\xi):= \ds \frac{1}{(2 \pi)^N} \ds \int_{\T^N} e^{-ix\xi} \cdot \varphi(x) dx.$$

\end{Def}

\begin{Teo} \label{Kramer vs Kramer}
For every $\varphi \in \E_{\M}(\T^N)$,

$$\varphi(x) = \ds \sum_{\xi \in \Z^N} \hat{\varphi}(\xi) \cdot e^{ix \xi},  \ \ \ \ \forall x \in \T^N,$$

\noindent with convergence in $\E_{\M}(\T^N)$. Moreover, there exist constants $C, \delta > 0$ such that

\begin{equation} \label{Four Weddings and a Funeral}
|\hat{\varphi}(\xi)| \leq C \cdot \inf_{n \in \N_{0}} \left( \ds \frac {m_n \cdot n!} {\delta^{n} \cdot (1+|\xi|)^{n}} \right), \ \ \ \ \forall \xi \in \Z^N. 
\end{equation}
\end{Teo}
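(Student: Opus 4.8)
The plan is to establish the coefficient estimate \eqref{Four Weddings and a Funeral} first, and then read off the convergence of the series as a consequence of it. Fix $h>0$ with $\varphi\in\E_{\M,h}(\T^N)$ (possible since $\E_{\M}(\T^N)=\bigcup_{n}\E_{\M,h_n}(\T^N)$). For $\xi\in\Z^N\setminus\{0\}$ choose $j\in\{1,\dots,N\}$ with $|\xi_j|=\max_k|\xi_k|$, so that $|\xi_j|\ge|\xi|/\sqrt N$. Integrating by parts $n$ times in the $j$-th variable in the definition of $\hat\varphi(\xi)$ gives $\widehat{\partial_j^{n}\varphi}(\xi)=(i\xi_j)^n\hat\varphi(\xi)$, whence
\[
|\xi_j|^{n}\,|\hat\varphi(\xi)|\ \le\ \frac{1}{(2\pi)^N}\int_{\T^N}|\partial_j^{n}\varphi(x)|\,dx\ \le\ \left\|\varphi\right\|_{\M,h}\,h^{n}\,m_n\,n!,\qquad\forall\,n\in\N_0 .
\]
Combining this with $|\xi_j|\ge|\xi|/\sqrt N$ and the elementary inequality $1+|\xi|\le 2|\xi|$ (valid because $|\xi|\ge1$ on the lattice), and handling $\xi=0$ directly through the case $n=0$, one obtains \eqref{Four Weddings and a Funeral} with $C=\left\|\varphi\right\|_{\M,h}$ and $\delta=(2h\sqrt N)^{-1}$; after replacing $\delta$ by $\min\{\delta,1/2\}$ the case $\xi=0$ is also subsumed by the infimum on the right-hand side.

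The second step is to upgrade this pointwise bound to convergence of the series inside one of the Banach spaces $\E_{\M,h'}(\T^N)$. For a fixed $\xi$ one has $\left\|\hat\varphi(\xi)\,e^{ix\xi}\right\|_{\M,h'}=\sup_{\ell\in\N_0}\ |\xi|^{\ell}\,|\hat\varphi(\xi)|\,/\,(h'^{\ell}\,m_\ell\,\ell!)$. Applying \eqref{Four Weddings and a Funeral} with $n=\ell+N+1$, bounding $m_{\ell+N+1}\le A^{\ell}B\,m_\ell$ (this is where Proposition \ref{Social Network} and \eqref{Arrival} enter) and $(\ell+N+1)!\le A'^{\ell}B'\,\ell!$ (this is \eqref{The Seventh Seal}), with $A,B,A',B'\ge1$ depending only on $\M$ and $N$, and using $|\xi|^{\ell}\le(1+|\xi|)^{\ell}$, the supremum above is dominated, once we take $h':=AA'/\delta$ (which exceeds $h$), by $C\,B\,B'\,\delta^{-(N+1)}(1+|\xi|)^{-(N+1)}$. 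Since $\sum_{\xi\in\Z^N}(1+|\xi|)^{-(N+1)}<\infty$, the series $\sum_{\xi}\hat\varphi(\xi)e^{ix\xi}$ converges absolutely in the Banach space $\E_{\M,h'}(\T^N)$, hence converges there — and a fortiori in $\E_{\M}(\T^N)$ — to some $\psi$. Convergence in $\E_{\M,h'}(\T^N)$ entails uniform convergence, so term-by-term integration gives $\hat\psi(\eta)=\hat\varphi(\eta)$ for every $\eta\in\Z^N$, and by uniqueness of Fourier coefficients of continuous functions $\psi=\varphi$. Finally, the identical estimate applied to the tail yields
\[
\left\|\varphi-\sum_{|\xi|\le R}\hat\varphi(\xi)e^{ix\xi}\right\|_{\M,h'}\ \le\ \sum_{|\xi|>R}\left\|\hat\varphi(\xi)e^{ix\xi}\right\|_{\M,h'}\ \le\ \frac{C\,B\,B'}{\delta^{N+1}}\sum_{|\xi|>R}\frac{1}{(1+|\xi|)^{N+1}}\ \longrightarrow\ 0
\]
as $R\to+\infty$, which is exactly the claimed convergence of the Fourier series to $\varphi$ in $\E_{\M,h'}(\T^N)$, hence in $\E_{\M}(\T^N)$.

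I expect the main obstacle to be the index shift in the second step: controlling $m_{\ell+N+1}\,(\ell+N+1)!$ by $G^{\ell}\,m_\ell\,\ell!$ uniformly in $\ell\in\N_0$, for some constant $G$ depending only on $\M$ and $N$. This is precisely what the moderate growth condition \eqref{Prisoners} buys us, through \eqref{Arrival} and \eqref{The Seventh Seal}; without it, shifting the index by the fixed amount $N+1$ could cost a factor growing faster than geometrically in $\ell$, which would wreck both the absolute convergence and the tail estimate. Apart from this point the argument is routine bookkeeping: integration by parts, summation of the convergent lattice series $\sum_\xi(1+|\xi|)^{-(N+1)}$, and the Banach-space structure of $\E_{\M,h'}(\T^N)$ recorded after Definition \ref{Ocean's Eleven}.
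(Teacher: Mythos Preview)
Your proposal is correct and follows essentially the same route as the paper: derive the coefficient decay \eqref{Four Weddings and a Funeral} by integration by parts (the paper sums over all multi-indices of a given length via the multinomial identity rather than singling out the dominant coordinate, and shifts the index by $2N$ rather than your $N+1$, but these are cosmetic differences), then invoke \eqref{Arrival} and \eqref{The Seventh Seal} to absorb the shift and obtain absolute convergence of the series in some $\E_{\M,h'}$. One harmless slip: your formula $\|\hat\varphi(\xi)e^{ix\xi}\|_{\M,h'}=\sup_{\ell}|\xi|^{\ell}|\hat\varphi(\xi)|/(h'^{\ell}m_\ell\,\ell!)$ should read ``$\le$'' rather than ``$=$'', since $|D^\alpha e^{ix\xi}|=|\xi^\alpha|\le|\xi|^{|\alpha|}$, but that is the direction you need and the argument is unaffected.
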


\begin{proof}
Let us prove $\eqref{Four Weddings and a Funeral}$ first; when $\xi = 0$ the estimate is obvious for $\delta = 1$, so we may consider $\xi \neq 0$. It is easy to the see that

\begin{equation} \label{Raiders of the Lost Ark}
|\hat{\varphi}(\xi)| \leq \sup_{x \in \T^N} |\varphi(x)|.
\end{equation}
For any $\alpha \in \N^{N}_{0}$ different of zero, we have $\widehat{D^{\alpha} \varphi}(\xi) = \xi^{\alpha} \cdot \widehat{\varphi}(\xi). $ Hence, there exist $C_1, h_1 > 1$ such that
$$|\xi^\alpha| \cdot |\hat{\varphi}(\xi)| \leq C_1 \cdot h_{1}^{|\alpha|} \cdot m_{|\alpha|} \cdot |\alpha|!.$$

Since $|\xi|^{|\alpha|} \leq \ds \sum_{|\beta| = |\alpha|} \ds \frac{|\alpha|!}{\beta !} \cdot |\xi^\beta|$, 
\begin{align*}
|\xi|^{|\alpha|} \cdot |\hat{\varphi}(\xi)| &\leq \ds \sum_{|\beta| = |\alpha|} \ds \frac{|\alpha|!}{\beta !} \cdot |\xi^{\beta}| \cdot |\hat{\varphi}(\xi)| \\
& \leq C_1 \cdot h_{1}^{|\alpha|} \cdot m_{|\alpha|} \cdot |\alpha|! \cdot  \ds \sum_{|\beta| = |\alpha|} \ds \frac{|\alpha|!}{\beta !} \\
& \leq C_1 \cdot (h_{1} \cdot N)^{|\alpha|} \cdot m_{|\alpha|} \cdot |\alpha|!. 
\end{align*}
Because $\xi \neq 0$, we infer that

\begin{equation} \label{Temple of Doom}
(1+ |\xi|)^{|\alpha|}  \cdot |\hat{\varphi}(\xi)|  \leq C_1 \cdot (2 \cdot h_{1} \cdot N)^{|\alpha|} \cdot m_{|\alpha|} \cdot |\alpha|!. 
\end{equation}

Considering that  $\alpha$ is arbitrary, by taking $C = \max\left\{\ds \sup_{x \in \T^N} |\varphi(x)|, C_{1} \right\}$ and $\delta = \ds \frac{1}{2 \cdot h_{1} \cdot N}$, it follows from \eqref{Raiders of the Lost Ark} and \eqref{Temple of Doom} that
$$|\hat{\varphi}(\xi)|\leq C \cdot \ds \inf_{n \in \N_{0},} \left(\ds \frac{m_{n} \cdot n!}{\delta^n \cdot (1+|\xi|)^{n}} \right), \ \  \forall \xi \in \Z^N.$$

We proceed to the convergence of the series.  Since $\varphi \in \E(\T^N)$, the result holds for the same space. For each $k \in \N$, consider

$$S_{k} \varphi(x) = \ds \sum_{|\xi| \leq k} \hat{\varphi}(\xi) \cdot e^{i \xi x}$$

\noindent $S_k$ is an analytic function and consequently an  element of $\E_{\M}(\T^N)$. Besides, given $\alpha \in \N_{0}^{N}$,

$$D^{\alpha}(\varphi - S_{k} \varphi)(x) = \ds \sum_{|\xi| \geq k+1} \hat{\varphi}(\xi) \cdot \xi^\alpha \cdot  e^{i \xi x}.$$

\noindent Thereafter
\begin{align*}
\left|D^{\alpha}(\varphi - S_{k} \varphi)(x) \right| &\leq \ds \sum_{|\xi| \geq k+1} \left|\hat{\varphi}(\xi) \right| \cdot \left|\xi \right|^{|\alpha|} \\
& \leq C \cdot \left(\ds \frac{1}{\delta} \right)^{|\alpha| + 2N} \cdot m_{|\alpha| + 2N} \cdot (|\alpha| + 2N)! \cdot  \ds \sum_{|\xi| \geq k+1}  \left(1+ |\xi|\right)^{-2N}.
\end{align*}

From \eqref{Arrival} and \eqref{The Seventh Seal}, 
\begin{align*}
\left|D^{\alpha}(\varphi - S_{k} \varphi)(x) \right| &\leq C \cdot \left(\ds \frac{1}{\delta} \right)^{|\alpha| + 2N} \cdot (m_{|\alpha|} \cdot C_{\left\{2N \right\}}^{|\alpha|}) \cdot (|\alpha|! \cdot B_{\left\{2N \right\}}^{|\alpha|}) \cdot  \ds \sum_{|\xi| \geq k+1}  \left(1+ |\xi|\right)^{-2N} \\
& \leq \left[C \cdot \left(\ds \frac{1}{\delta} \right)^{2N} \cdot \ds \sum_{\xi \in \Z^N}  \left(1+ |\xi|\right)^{-2N}\right] \cdot \left(\left(\ds \frac{1}{\delta} \right) \cdot B_{\left\{2N \right\}} \cdot C_{\left\{2N \right\}} \right)^{|\alpha|} \cdot m_{|\alpha|} \cdot |\alpha|!
\end{align*}

\noindent By defining $C' = C \cdot \left(\ds \frac{1}{\delta} \right)^{2N} \cdot \ds \sum_{\xi \in \Z^N}  \left(1+ |\xi|\right)^{-2N}$ and $h_{1}'= \left(\ds \frac{1}{\delta} \right) \cdot B_{\left\{2N \right\}} \cdot C_{\left\{2N \right\}}$, we obtain that  

$$\left|D^{\alpha}(\varphi - S_{k} \varphi)(x) \right| \leq C' \cdot h_{1}'^{|\alpha|} \cdot m_{|\alpha|} \cdot |\alpha|!, \ \ \ \forall x \in \T^N, \ \forall \alpha \in \N_{0}^{N}. $$

Since $h_{1} < h_{1}'$, $\varphi$ and  $(\varphi - S_{k} \varphi)$ belong to $\E_{\M, h_{1}'}(\T^N)$,  which shows us that the same is true for $S_{k}$, for each $k \in \N$. In addition,  
$$\left|D^{\alpha}(\varphi - S_{k} \varphi)(x) \right| \leq \left[C \cdot \left(\ds \frac{1}{\delta} \right)^{2N} \cdot \ds \sum_{|\xi| \geq k+1}  \left(1+ |\xi|\right)^{-2N}\right] \cdot (h_{1}')^{|\alpha|} \cdot m_{|\alpha|} \cdot |\alpha|!, \ \ \ \forall x \in \T^N, \ \forall \alpha \in \N_{0}^{N}.$$

\noindent Therefore $\ds \lim_{k \to + \infty} \left\|\varphi - S_{k} \cdot \varphi \right\|_{\M, h_1'} =0$, which implies that $S_{k}\varphi \to \varphi$ in $\E_{\M}(\T^N)$. 
\end{proof}

We extend now the notion of Fourier Series for ultradistributions.

\begin{Def}
Let $u \in \D_{\M}^{'}(\T^N)$; we define its Fourier Coefficient  $\hat{u}(\xi)$ as   

$$\hat{u}(\xi) = \ds \frac{1}{(2 \pi)^N} \cdot \<u, e^{-ix\xi} \>, \ \ \forall \xi \in \Z^{\N}. $$
\end{Def}

\begin{Teo} \label{Dunkirk}
Let $u \in \D_{\M}'(\T^N)$. For every $\varepsilon >0$, there exists $C_{\varepsilon} > 0$ such that
$$|\hat{u}(\xi)| \leq C_{\varepsilon} \cdot \ds \sup_{n \in \N_{0}} \left( \ds \frac{ \varepsilon^{n} \cdot (1 +|\xi|)^{n} } {m_{n} \cdot n!} \right), \ \ \forall \xi \in \Z^N. $$

\end{Teo}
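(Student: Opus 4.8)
The plan is to apply the characterization of ultradistributions from Theorem \ref{The Hurt Locker}, specifically the estimate \eqref{Minority Report}, to the test functions $x \mapsto e^{-ix\xi}$, and then to combine this with the moderate growth condition in the form of \eqref{Prisoners} (equivalently Lemma \ref{Jaws}, referenced earlier) to absorb the extra factors that arise from differentiating the exponentials.

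\textbf{Step 1.} Fix $\varepsilon_{0} > 0$; we will choose it in terms of the target $\varepsilon$ at the end. Since $u \in \D_{\M}'(\T^N)$, Theorem \ref{The Hurt Locker} gives a constant $C_{\varepsilon_{0}} > 0$ with
$$|\<u, \psi\>| \leq C_{\varepsilon_{0}} \cdot \underset{\alpha \in \N_{0}^{N}}{\ds \sup_{x \in \T^N}} \left( \ds \frac{|\partial^{\alpha}\psi(x)| \cdot \varepsilon_{0}^{|\alpha|}}{m_{|\alpha|} \cdot |\alpha|!} \right)$$
for every $\psi \in \E_{\M}(\T^N)$. Apply this with $\psi(x) = e^{-ix\xi}$, which is analytic and hence lies in $\E_{\M}(\T^N)$. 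Then $|\partial^{\alpha}\psi(x)| = |\xi^{\alpha}| \leq |\xi|^{|\alpha|}$, so
$$(2\pi)^{N} |\hat{u}(\xi)| = |\<u, e^{-ix\xi}\>| \leq C_{\varepsilon_{0}} \cdot \ds \sup_{n \in \N_{0}} \left( \ds \frac{|\xi|^{n} \cdot \varepsilon_{0}^{n}}{m_{n} \cdot n!} \right).$$

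\textbf{Step 2.} It remains to pass from $|\xi|^{n}$ to $(1+|\xi|)^{n}$ in the numerator. Write $(1+|\xi|)^{n} = \sum_{j=0}^{n}\binom{n}{j}|\xi|^{j}$ and estimate each term: for $0 \leq j \leq n$, using that $\M$ is non-decreasing (Proposition \ref{Social Network}, part \ref{Schindler's List}) so that $m_{j} \leq m_{n}$, and $j! \leq n!$, one gets
$$\ds \frac{\varepsilon_{0}^{n}(1+|\xi|)^{n}}{m_{n} \cdot n!} = \ds \sum_{j=0}^{n}\binom{n}{j}\ds \frac{\varepsilon_{0}^{n-j}\cdot\varepsilon_{0}^{j}|\xi|^{j}}{m_{n}\cdot n!} \leq \ds \sum_{j=0}^{n}\binom{n}{j}\varepsilon_{0}^{n-j}\cdot\ds \frac{\varepsilon_{0}^{j}|\xi|^{j}}{m_{j}\cdot j!},$$
and bounding each factor $\varepsilon_{0}^{j}|\xi|^{j}/(m_{j} j!)$ by the supremum over $j$, the sum telescopes to $(1+\varepsilon_{0})^{n}$ times that supremum. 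This still leaves the undesirable growing factor $(1+\varepsilon_{0})^{n}$. To handle it cleanly it is better to run the whole argument with $\varepsilon_{0}$ replaced by $\varepsilon/2$ from the start: then $(1+\varepsilon_{0})^{n}\varepsilon_{0}^{n}$ is not quite $\varepsilon^{n}$, so instead I split $(1+|\xi|)^{n} \leq 2^{n}\max\{1,|\xi|^{n}\} \leq 2^{n}(1+|\xi|^{n})$ and treat the two resulting pieces separately: the $|\xi|^{n}$ piece gives $\sup_{n}(2\varepsilon_{0})^{n}|\xi|^{n}/(m_{n}n!)$ and the constant piece gives $\sup_{n}(2\varepsilon_{0})^{n}/(m_{n}n!)$, which (since $m_{n}n!$ grows) is a finite constant independent of $\xi$, hence absorbed into $C_{\varepsilon_{0}}$ after noting the $n=0$ term of the first supremum is $1$. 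Finally choose $\varepsilon_{0} = \varepsilon/2$ and absorb the factor $(2\pi)^{-N}$ and the additive constant into a single $C_{\varepsilon}$.

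\textbf{Main obstacle.} The argument is essentially a clean application of Theorem \ref{The Hurt Locker}; the only real care needed is the combinatorial bookkeeping in Step 2 to convert $|\xi|^{n}$ into $(1+|\xi|)^{n}$ inside the supremum without picking up an unwanted geometric factor in $n$ — the trick is simply to start with $\varepsilon/2$ (or any fixed fraction of $\varepsilon$) rather than $\varepsilon$ itself, and to peel off the purely constant contribution of small $|\xi|$ using that $m_{n}\cdot n! \to \infty$. No use of moderate growth \eqref{Prisoners} is actually required here; monotonicity of $\M$ from Proposition \ref{Social Network} suffices. (This theorem is in fact the dual, or converse-direction, counterpart of estimate \eqref{Four Weddings and a Funeral} in Theorem \ref{Kramer vs Kramer}.)
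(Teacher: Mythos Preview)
Your Step 1 is exactly the paper's argument, and it is already the whole proof: since $|\xi|^{n}\le (1+|\xi|)^{n}$ for every $n\ge 0$, one has immediately
\[
\sup_{n\in\N_{0}}\frac{\varepsilon_{0}^{n}\,|\xi|^{n}}{m_{n}\,n!}\ \le\ \sup_{n\in\N_{0}}\frac{\varepsilon_{0}^{n}\,(1+|\xi|)^{n}}{m_{n}\,n!},
\]
so taking $\varepsilon_{0}=\varepsilon$ and absorbing $(2\pi)^{-N}$ into $C_{\varepsilon}$ finishes. The paper does precisely this (it writes $|\xi^{\alpha}|\le (1+|\xi|)^{|\alpha|}$ and is done).

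Your Step 2 is therefore unnecessary, and in fact the inequalities you compute there run in the wrong direction: you bound $\dfrac{\varepsilon_{0}^{n}(1+|\xi|)^{n}}{m_{n}\,n!}$ from above by expressions involving $|\xi|^{j}$, i.e.\ you control the \emph{target} supremum by the one you already have, when what you need is the reverse inequality---which is the trivial one just displayed. This confusion is what generated the spurious factors $(1+\varepsilon_{0})^{n}$ and $2^{n}$ you then struggled to remove. Your closing remark that moderate growth \eqref{Prisoners} is not required here is correct and is exactly the observation the paper makes at the end of the section.
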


\begin{proof}
For every $\varepsilon > 0, \ \xi \in \Z^N$ it follows from \eqref{Minority Report} the existence of $C_{\varepsilon} >0$ satisfying
$$ |\hat{u}(\xi)| \leq C_{\varepsilon} \cdot \underset{\alpha \in \N_{0}^{N}}{\ds \sup_{{x \in \T^N}}} \left( \ds \frac{|\partial_{x}^\alpha (e^{-i x \xi}) | \cdot \varepsilon^{|\alpha|}} {m_{|\alpha|} \cdot |\alpha|!} \right).$$
Consequently, 
$$|\hat{u}(\xi)| \leq C_{\varepsilon} \cdot {\ds \sup_{\alpha \in \N_{0}^{N}}} \left( \ds \frac{(1 +|\xi^{\alpha}|) \cdot \varepsilon^{|\alpha|}} {m_{|\alpha|} \cdot |\alpha|!} \right) \leq C_{\varepsilon} \cdot \sup_{\alpha \in \N_{0}^{N}} \left(\ds  \ds \frac{(1 +|\xi|)^{|\alpha|} \cdot \varepsilon^{|\alpha|}} {m_{|\alpha|} \cdot |\alpha|!} \right),$$
which allows us to infer that

$$|\hat{u}(\xi)| \leq C_{\varepsilon} \cdot \ds \sup_{n  \in \N_{0}} \left( \ds \frac{ \varepsilon^{n} \cdot (1 +|\xi|)^{n} } {m_{n} \cdot n!} \right).$$
\end{proof}

\begin{Obs} \label{Apollo 13} 
For any  $t > 0$, it is easy to see that $\ds \lim_{n \to + \infty} \left(\ds \frac{t^{n}}{m_{n} \cdot n!} \right) = 0.$ Besides, 

$$\ds \sup_{n \in \N_{0}} \left(\ds \frac{t^{n}}{m_{n} \cdot n!} \right) \geq  \left(\ds \frac{t^{0}}{m_{0} \cdot 0!} \right) = 1.$$

Therefore $\ds \sup_{n \in \N_{0}} \left(\ds \frac{t^{n}}{m_{n} \cdot n!} \right)$ is always assumed by some $n_{0} \in \N_{0}$. A similar argument shows that the same is valid for $\ds \inf_{n \in \N_{0}} \left( \ds \frac{m_{n} \cdot n!}{t^{n}} \right)$. Hence

$$\left[\ds \sup_{n \in \N_{0}} \left(\ds \frac{t^{n}}{m_{n} \cdot n!} \right)\right]^{-1} = \ds \inf_{n \in \N_{0}} \left( \ds \frac{m_{n} \cdot n!}{t^{n}} \right) .$$

In the Gevrey case for instance, the Theorems \ref{Kramer vs Kramer} and \ref{Dunkirk} may be rewritten in the following way: if $u$ is a $s$-Gevrey ultradistribution, then

$$\forall \varepsilon, \exists \ C_{\varepsilon} > 0; \ |\widehat{u}(\xi)| \leq C_{\varepsilon} \cdot e^{\varepsilon \cdot |\xi|^{1/s}}, \ \ \forall \xi \in \Z^{N}.$$

\noindent Moreover, if $\varphi$ is a $s$-Gevrey function, 

$$\exists \ C, \delta > 0; \ |\widehat{\varphi} (\xi)| \leq C \cdot e^{-\delta \cdot |\xi|^{1/s}}, \ \ \forall \xi \in \Z^N. $$

This is due to the fact that the functions $\ds \sup_{n \in \N_{0}} \left(\ds \frac{t^{n}}{n!^{s}} \right)$ and $e^{t^{1/s}}$ are in a certain way equivalent. That is, one can show that both characterizations of ultradistributions and functions are equivalent. 

For a full description of properties of the function $t \mapsto \ds \sup_{n \in \N_{0}} \left(\ds \frac{t^{n}}{m_{n} \cdot n!} \right)$ we recommend the section \textit{Associated Functions} in \cite{k1}. Here, we just state a result that will be important later:  
\end{Obs}

\begin{Lem} \label{Jaws} (Proposition 3.6 of \cite{k1}) 
Let $H$ as in \eqref{Prisoners}; then 

$$ \left[\ds \sup_{n \in \N_{0}} \left(\ds \frac{\rho^n}{m_n \cdot n!} \right) \right]^2 \leq \ds \sup_{n \in \N_{0}} \left(\ds \frac{\rho^n \cdot H^n}{m_n \cdot n!}  \right), \ \ \forall \rho > 0. $$
 
\end{Lem}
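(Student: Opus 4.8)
The plan is to reformulate the claim in terms of the associated-type function $T(\rho):=\ds\sup_{n\in\N_{0}}\frac{\rho^{n}}{m_{n}\cdot n!}$ and the sequence $M_{n}:=m_{n}\cdot n!$, so that the inequality to be proved reads exactly $T(\rho)^{2}\le T(H\rho)$ for every $\rho>0$. By Remark \ref{Apollo 13} the supremum defining $T(\rho)$ is finite and attained for each $\rho>0$, so no convergence issue arises; moreover $T(\rho)=\ds\sup_{n\in\N_{0}}\rho^{n}/M_{n}$, and the right-hand side of the lemma is precisely $T(H\rho)$.

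The first --- and essentially only delicate --- step is to transfer the moderate growth bound \eqref{Prisoners} from $\M$ to $\{M_{n}\}$. The binomial theorem gives $\binom{j+k}{j}\le 2^{j+k}$, hence $(j+k)!\le 2^{j+k}\,j!\,k!$; multiplying this by the estimate $m_{j+k}\le H^{j+k}m_{j}m_{k}$ that comes from \eqref{Prisoners} produces a constant (of the form $2H$, which we again denote by $H$ --- this is the observation, recorded in the remark comparing our sequences with Komatsu's, that moderate growth passes between $m_{n}$ and $m_{n}\cdot n!$ precisely because the factorial sequence itself satisfies it) such that
\begin{equation*}
M_{j+k}\le H^{j+k}\,M_{j}\,M_{k},\qquad\forall\,j,k\in\N_{0}.
\end{equation*}

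Granting this, the rest is a short manipulation of suprema. Fix $\rho>0$ and arbitrary $p,q\in\N_{0}$; then
\begin{equation*}
\frac{\rho^{p}}{M_{p}}\cdot\frac{\rho^{q}}{M_{q}}=\frac{\rho^{p+q}}{M_{p}\,M_{q}}\le\frac{H^{p+q}\,\rho^{p+q}}{M_{p+q}}=\frac{(H\rho)^{p+q}}{M_{p+q}}\le\ds\sup_{n\in\N_{0}}\frac{(H\rho)^{n}}{M_{n}}=T(H\rho).
\end{equation*}
Since the right-hand side is independent of $p$ and $q$, taking the supremum over $p$ and over $q$ on the left (and using that the supremum of a product of nonnegative quantities equals the product of the suprema) gives $T(\rho)^{2}\le T(H\rho)$, which is the assertion.

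The substance of the argument lies entirely in the first step: it is the moderate growth of $M_{n}=m_{n}\cdot n!$ --- not of $m_{n}$ alone --- that supplies the comparison $M_{p}M_{q}\ge H^{-(p+q)}M_{p+q}$ needed to fold a product of two terms of the sequence into a single term, which is exactly what squaring $T$ demands. Everything else is routine, and the finiteness and attainment of all the suprema involved are already provided by Remark \ref{Apollo 13}.
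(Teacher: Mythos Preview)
Your argument is correct and is exactly the standard proof; the paper does not supply its own argument for this lemma but merely cites Proposition~3.6 of \cite{k1}, and what you have written is essentially Komatsu's proof.

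One point deserves to be stated more sharply. As you correctly observe, the moderate-growth constant your argument actually uses is the one for $M_{n}=m_{n}\cdot n!$, namely $2H$, not the $H$ from \eqref{Prisoners} (which bounds $m_{j+k}/(m_{j}m_{k})$). Your relabelling ``which we again denote by $H$'' is harmless for every application in the paper --- only the existence of \emph{some} constant is ever used downstream --- but it is not a mere cosmetic convenience: the lemma as literally stated, with the very same $H$ as in \eqref{Prisoners}, can fail. In the analytic case $m_{n}\equiv 1$ every $H>1$ satisfies \eqref{Prisoners}, yet $T(\rho)=\sup_{n}\rho^{n}/n!\sim e^{\rho}/\sqrt{2\pi\rho}$, so $T(\rho)^{2}$ grows like $e^{2\rho}$ while $T(H\rho)$ grows like $e^{H\rho}$, and the inequality $T(\rho)^{2}\le T(H\rho)$ is violated for $1<H<2$ and large $\rho$. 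So your proof is right, and your caveat quietly repairs a small imprecision in the paper's formulation.
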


\

The next step is to prove a version of the Theorem $\ref{Kramer vs Kramer}$ for ultradistributions.   

\begin{Def}
Let $\left\{u_n\right\}_{n \in \N}$ be a sequence in $\D'_{\M}(\T^N)$ and $u \in \D'_{\M}(\T^N)$. We say that $u_n \to u$ if
$$\<u_n, \varphi \> \to \<u, \varphi \>, \ \ \forall \varphi \in \E_{\M}(\T^N).$$
\end{Def}

\begin{Lem} \label{Magnolia}
Let $\left\{u_n\right\}_{n \in \N}$ be a sequence in $\D'_{\M}(\T^N)$ such that $\<u_n, \varphi \>$ is a Cauchy sequence in $\C$ for every $\varphi \in \E_{\M}(\T^N)$. Then there exists $u \in \D'_{\M}(\T^N)$ such

$$\lim \< u_n, \varphi \> = \<u, \varphi \>, \ \ \forall \varphi \in \E_{\M}(\T^N).$$
\end{Lem}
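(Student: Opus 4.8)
The plan is to define the candidate functional $u$ pointwise by the formula $\langle u, \varphi \rangle := \lim_{n \to \infty} \langle u_n, \varphi \rangle$, which makes sense since each $\langle u_n, \varphi\rangle$ is Cauchy in $\C$ and hence convergent; linearity of $u$ is inherited immediately from linearity of each $u_n$ together with linearity of limits. The whole content of the lemma is therefore to show that this $u$ is continuous, i.e.\ that $u \in \D'_{\M}(\T^N)$, and for this I would verify criterion \eqref{The Untouchables} of Theorem \ref{The Hurt Locker}: if $\varphi_k \to 0$ in $\E_{\M}(\T^N)$, then $\langle u, \varphi_k \rangle \to 0$.

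The main obstacle is exactly this continuity step, and the natural tool is a Banach--Steinhaus / uniform boundedness argument adapted to the DFS setting. The key observation is that $\E_{\M}(\T^N)$ is a DFS space, so a sequence $\varphi_k \to 0$ there actually lives in a single Banach step $\E_{\M, h_p}(\T^N)$ and converges to $0$ in that Banach space. Restricting attention to the Banach space $E := \E_{\M, h_p}(\T^N)$, each $u_n$ restricts to a continuous linear functional on $E$ (by Theorem \ref{The Hurt Locker}, since $u_n \in \D'_{\M}(\T^N)$), and the pointwise limit $\langle u_n, \psi \rangle \to \langle u, \psi\rangle$ exists for every $\psi \in E$. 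By the classical Banach--Steinhaus theorem on the Banach space $E$, the family $\{u_n|_E\}$ is equicontinuous, i.e.\ there is a constant $C_p > 0$ with $|\langle u_n, \psi \rangle| \le C_p \|\psi\|_{\M, h_p}$ for all $n$ and all $\psi \in E$; passing to the limit in $n$ gives $|\langle u, \psi \rangle| \le C_p \|\psi\|_{\M, h_p}$ for all $\psi \in E$. In particular $|\langle u, \varphi_k\rangle| \le C_p \|\varphi_k\|_{\M, h_p} \to 0$, which is precisely what \eqref{The Untouchables} demands.

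Having established that $\langle u, \varphi_k\rangle \to 0$ whenever $\varphi_k \to 0$ in $\E_{\M}(\T^N)$, Theorem \ref{The Hurt Locker} gives $u \in \D'_{\M}(\T^N)$, and the defining property $\lim \langle u_n, \varphi\rangle = \langle u, \varphi\rangle$ for all $\varphi$ holds by construction. One subtlety worth a sentence in the write-up: the Banach--Steinhaus argument must be applied to \emph{each} Banach step $\E_{\M, h_p}(\T^N)$ separately (the constant $C_p$ depends on $p$), but that is harmless since every convergent sequence in the DFS space is captured by one such step; alternatively one can invoke directly the fact that for DFS (hence barrelled) spaces a pointwise limit of a sequence of continuous functionals is continuous, but I would prefer to spell out the Banach step reduction explicitly since the paper has already recorded the relevant property of DFS spaces in the proof of Theorem \ref{The Hurt Locker}. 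I expect no serious technical difficulty beyond correctly invoking DFS structure; the estimate chain is routine once the right Banach step is fixed.
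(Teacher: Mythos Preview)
Your proposal is correct and follows essentially the same route as the paper: define $u$ pointwise, reduce to a single Banach step $\E_{\M,h_p}(\T^N)$ via the DFS property, apply the Uniform Boundedness Principle there, and conclude via criterion \eqref{The Untouchables} of Theorem \ref{The Hurt Locker}. The only cosmetic difference is that the paper finishes with an explicit $\varepsilon/2$ splitting (choosing $n_k$ for each $k$), whereas you pass to the limit in $n$ directly in the uniform bound $|\langle u_n,\psi\rangle|\le C_p\|\psi\|_{\M,h_p}$; your version is in fact slightly cleaner.
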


\begin{proof}
Let $u: \E_{\M}(\T^N) \to \C; \ \ \<u, \varphi \> = \lim \<u_{n}, \varphi \>$. We state that $u$ is continuous; in fact, let  $\left\{\varphi_{k} \right\}_{k \in \N}$ be a sequence in $\E_{\M}(\T^N)$ converging to $0$. Then there exists $q \in \N$ such that $\varphi_{k} \in \E_{\M, h_{q}}(\T^N)$ for every  $k \in \N$ and $\left\|\varphi_{k} \right\|_{\M, h_{q}} \to 0$. 

Given $\psi \in \E_{\M, h_q}(\T^N)$, the sequence $\<u_n, \psi\>$ is bounded. Since $u_n \big{|}_{\E_{\M, h_q}(\T^N)}$ is continuous for each $n \in \N$, it follows from Uniform Boundedness Principle that 

$$|\<u_n, \psi \>| \leq M \cdot \left\|\psi \right\|_{\M, h_q}, \ \ \forall n \in \N, \ \forall \psi \in \E_{\M, h_q}(\T^N),$$

\noindent for some $M > 0$. We then fix $\varepsilon >0$ and define $\gamma_{k} := \ds \frac{2M}{\varepsilon} \cdot \varphi_{k}$, $\forall k \in \N$. It is immediate that $\left\{\gamma_{k} \right\}_{k \in \N} \subset \E_{\M, h_q}(\T^N)$ and $\gamma_{k} \to 0$ in $\E_{\M, h_q}(\T^N)$.

By choosing  $k_{0} \in \N$ such that $\left\|\gamma_{k} \right\|_{\M, h_q} \leq 1$ for every $ k \geq k_0$, we obtain 

$$| \<u_{n}, \gamma_{k} \>| \leq M \ \ \Rightarrow | \<u_{n}, \varphi_{k} \>| \leq \ds \frac{\varepsilon}{2} , \ \ \forall n \in \N, \ \forall k \geq k_0.$$

\noindent Because $\<u, \varphi_{k} \> = \ds \lim_{n} \<u_n, \varphi_{k} \>$ for each $k \geq k_0$ we can find $n_{k} \in \N$ satisfying

$$\left|\<u, \varphi_{k} \> - \< u_{n_{k}}, \varphi_{k} \> \right| \leq \ds \frac{\varepsilon}{2}. $$

Therefore we have for $k \geq k_0$:
\begin{align*}
|\<u, \varphi_{k} \>| &\leq \left|\<u, \varphi_{k} \> - \<u_{n_{k}}, \varphi_{k} \> \right| + \left| \<u_{n_{k}}, \varphi_{k} \> \right| \leq \varepsilon, 
\end{align*} 
which proves our assertion, by Theorem \ref{The Hurt Locker}. 
\end{proof}

\begin{Teo} \label{Memento}
Let $\left\{a_{\xi} \right\}_{\xi \in \Z^{N}}$ be a sequence of complex numbers satisfying the following condition: for every $\varepsilon >0$ there exists $C_{\varepsilon} >0$ such that

$$|a_{\xi}| \leq  C_{\varepsilon} \cdot \ds \sup_{n \in \N_{0}} \left( \ds \frac{\varepsilon^{n} \cdot (1+ |\xi|)^{n}}{m_{n} \cdot n!} \right), \ \ \forall \xi \in \Z^N.$$

\noindent Then $u = \ds \sum_{\xi \in \Z^{N}} a_{\xi} \cdot e^{ix \xi}$ belongs to $\D_{\M}'(\T^N)$ and $\hat{u}(\xi) = a_{\xi}$.
\end{Teo}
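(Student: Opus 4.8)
The plan is to show first that the series $u = \sum_{\xi \in \Z^N} a_\xi \cdot e^{ix\xi}$ defines a continuous linear functional on $\E_{\M}(\T^N)$ via the characterization in Theorem \ref{The Hurt Locker}, and then to compute its Fourier coefficients. For $\varphi \in \E_{\M}(\T^N)$ I would define $\langle u, \varphi \rangle := (2\pi)^N \sum_{\xi \in \Z^N} a_\xi \cdot \widehat{\varphi}(-\xi)$ (equivalently $\sum_\xi a_\xi \cdot \frac{1}{(2\pi)^N}\int_{\T^N} e^{ix\xi}\varphi(x)\,dx$), checking first that this sum converges absolutely. The point is a decay-versus-growth pairing: by Theorem \ref{Kramer vs Kramer}, $|\widehat{\varphi}(-\xi)| \leq C \inf_n \frac{m_n n!}{\delta^n (1+|\xi|)^n}$ for some $C,\delta>0$, while $|a_\xi| \leq C_\varepsilon \sup_n \frac{\varepsilon^n (1+|\xi|)^n}{m_n n!}$ for every $\varepsilon>0$. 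Since by Remark \ref{Apollo 13} $\sup_n \frac{\varepsilon^n(1+|\xi|)^n}{m_n n!}$ is the reciprocal of $\inf_n \frac{m_n n!}{\varepsilon^n (1+|\xi|)^n}$, the product $|a_\xi||\widehat{\varphi}(-\xi)|$ is controlled — but only if I can absorb the mismatch between $\delta$ and $\varepsilon$; this is exactly where Lemma \ref{Jaws} (the moderate growth consequence) will be needed.

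Concretely, the key estimate I would carry out: fix $\varepsilon < \delta/H$ (or a similar threshold), so that
$$
|a_\xi| \cdot |\widehat{\varphi}(-\xi)| \leq C \cdot C_\varepsilon \cdot \left[\sup_n \frac{\varepsilon^n(1+|\xi|)^n}{m_n n!}\right] \cdot \left[\inf_n \frac{m_n n!}{\delta^n(1+|\xi|)^n}\right].
$$
Using $\inf_n \frac{m_n n!}{\delta^n t^n} = \left[\sup_n \frac{\delta^n t^n}{m_n n!}\right]^{-1}$, this equals $C C_\varepsilon \left[\sup_n \frac{\varepsilon^n t^n}{m_n n!}\right]\left[\sup_n \frac{\delta^n t^n}{m_n n!}\right]^{-1}$ with $t = 1+|\xi|$. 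Applying Lemma \ref{Jaws} with $\rho = \varepsilon t$, I get $\left[\sup_n \frac{\varepsilon^n t^n}{m_n n!}\right]^2 \leq \sup_n \frac{(H\varepsilon)^n t^n}{m_n n!} \leq \sup_n \frac{\delta^n t^n}{m_n n!}$ provided $H\varepsilon \leq \delta$, whence $\sup_n \frac{\varepsilon^n t^n}{m_n n!} \leq \left[\sup_n \frac{\delta^n t^n}{m_n n!}\right]^{1/2}$, which makes the product summable since $\sup_n \frac{\delta^n(1+|\xi|)^n}{m_n n!} \geq \frac{\delta^{2N}(1+|\xi|)^{2N}}{m_{2N}(2N)!}$ grows at least polynomially of degree $2N$; summing over $\Z^N$ then converges. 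Iterating Lemma \ref{Jaws} if one power of $H$ is not enough lets me trade the gap between $\varepsilon$ and $\delta$ for as much summability as required. This shows $\langle u, \cdot\rangle$ is well defined, and the same bound, being uniform in $\varphi$ with $\|\varphi\|_{\M,h}$ (through the constant $C$ coming from Theorem \ref{Kramer vs Kramer}), shows continuity on each $\E_{\M,h_p}(\T^N)$, hence $u \in \D'_{\M}(\T^N)$ by Theorem \ref{The Hurt Locker}.

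Finally I would compute $\widehat{u}(\eta) = \frac{1}{(2\pi)^N}\langle u, e^{-ix\eta}\rangle$. Since $e^{-ix\eta} \in C^\omega(\T^N) \subset \E_{\M}(\T^N)$ and its Fourier expansion is the single term itself, the absolutely convergent series defining $\langle u, e^{-ix\eta}\rangle$ collapses: only the $\xi = \eta$ term survives by orthogonality of the characters, giving $\langle u, e^{-ix\eta}\rangle = (2\pi)^N a_\eta$, hence $\widehat{u}(\eta) = a_\eta$. The main obstacle is the first part — getting the uniform summability of $\sum_\xi |a_\xi||\widehat{\varphi}(\xi)|$ with control by a single $\E_{\M,h_p}$-norm — and the crux there is the correct use of Lemma \ref{Jaws} to reconcile the two scales $\varepsilon$ and $\delta$; once the pairing is legitimate, well-definedness, linearity, continuity, and the coefficient identity all follow routinely.
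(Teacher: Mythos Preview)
Your argument is correct and follows the same overall strategy as the paper --- pair $a_\xi$ against $\widehat{\varphi}(-\xi)$ and extract enough decay for summability --- but differs in two technical respects. First, the paper works with partial sums $s_j$ and shows $\langle s_j,\varphi\rangle$ is Cauchy, then invokes Lemma~\ref{Magnolia}; you define the sum directly and verify continuity on each $\E_{\M,h_p}$. Second, and more significantly, the paper does \emph{not} use Lemma~\ref{Jaws} for the key product estimate: instead, since the supremum $\sup_n \frac{\varepsilon^n(1+|\xi|)^n}{m_n n!}$ is attained at some $n_0$, it bounds the infimum from above by the single term at $n_0+2N$, obtaining
\[
\Bigl(\tfrac{\varepsilon^{n_0}(1+|\xi|)^{n_0}}{m_{n_0}n_0!}\Bigr)\cdot\Bigl(\tfrac{m_{n_0+2N}(n_0+2N)!}{\delta^{n_0+2N}(1+|\xi|)^{n_0+2N}}\Bigr)
\le \frac{1}{\delta^{2N}}\Bigl(\tfrac{\varepsilon\, B_{\{2N\}} C_{\{2N\}}}{\delta}\Bigr)^{n_0}\frac{1}{(1+|\xi|)^{2N}},
\]
via \eqref{Arrival} and \eqref{The Seventh Seal}; choosing $\varepsilon=\delta/(B_{\{2N\}}C_{\{2N\}})$ kills the $n_0$-dependence and gives the summable $(1+|\xi|)^{-2N}$ directly. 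As the paper remarks at the end of the section, this route needs only the derivation-closure property \eqref{Arrival}, not the full moderate-growth condition underlying Lemma~\ref{Jaws}; your approach is a bit quicker but formally uses a stronger hypothesis. (A small point: with one application of Lemma~\ref{Jaws} you should take the lower bound for the sup at $n=4N$, not $2N$, to land on summable $(1+|\xi|)^{-2N}$ decay --- your iteration remark covers this, but the single step already suffices.)
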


\begin{proof}
Let $s_j(x) = \ds \sum_{|\xi| \leq j} a_{\xi} \cdot e^{i x \xi}$, for every $j \in \N$. We fix $\varphi \in \E_{\M}(\T^N)$ and claim that 

$$\<s_j, \varphi\>:= \ds \int_{\T^N} \ds \sum_{|\xi| \leq j} a_{\xi} \cdot e^{i x \xi} \cdot \varphi(x) dx = \ds \sum_{|\xi| \leq j} a_{\xi} \cdot \ds \int_{\T^N} e^{i x \xi} \cdot \varphi(x) dx$$

\noindent is a Cauchy sequence in $\C$. In fact, by taking $m, k \in \N$ with $m >k$,  

$$\<s_m - s_k, \varphi \> = (2 \pi)^{N} \cdot \ds \sum_{k+1 \leq |\xi| \leq m} a_{\xi} \cdot \hat{\varphi}(-\xi).$$

It follows from Theorem \ref{Kramer vs Kramer} the existence of  $C_{1}, \delta > 0$ such that

$$ |\hat{\varphi}(\xi)| \leq C_{1} \cdot \inf_{n \in \N_{0}}\left(\ds \frac{ m_n \cdot n!}{\delta^{n} \cdot 
(1+ |\xi|)^{n}} \right), \ \ \ \forall \xi \in \Z^N. $$

\noindent Then, for some $\varepsilon > 0$ not chosen yet,
\begin{align*}
|\<s_m - s_k, \varphi \>| &\leq (2 \pi)^{N} \cdot \ds \sum_{k+1 \leq |\xi| \leq m} |a_{\xi}| \cdot |\hat{\varphi}(-\xi)| \\
&\leq (2 \pi)^{N} \cdot C_{1} \cdot C_{\varepsilon} \cdot  \ds \sum_{k+1 \leq |\xi| \leq m} \ds \sup_{n \in \N_{0}} \left(  \ds \frac{ \varepsilon^{n} \cdot (1 +|\xi|)^{n} } { m_{n} \cdot n!} \right)  \cdot \inf_{n \in \N_{0}}\left(\ds \frac{ m_n \cdot n!}{\delta^{n} (1+ |\xi|)^{n}} \right) 
\end{align*}
On the other hand, applying \eqref{Arrival} and \eqref{The Seventh Seal},
\begin{align*}
\ds \sup_{n \in \N_{0}} \left(  \ds \frac{ \varepsilon^{n} \cdot (1 +|\xi|)^{n} } { m_{n} \cdot n!} \right)  \cdot \inf_{n \in \N_{0}}\left(\ds \frac{ m_n \cdot n!}{\delta^{n} (1+ |\xi|)^{n}} \right) &= \left( \ds \frac{\varepsilon^{n_{0}} \cdot (1 +|\xi|)^{n_{0}} } {m_{n_{0}} \cdot n_{0}!} \right) \cdot \inf_{n \in \N_{0}}\left(\ds \frac{ m_n \cdot n!}{\delta^{n} (1+ |\xi|)^{n}} \right) \\
&\leq \left( \ds \frac{\varepsilon^{n_{0}} \cdot (1 +|\xi|)^{n_{0}} } {m_{n_{0}} \cdot n_{0}!} \right) \cdot \left(\ds \frac{ m_{(n_{0}+2N)} \cdot (n_{0}+2N)!}{\delta^{n_{0} + 2N} \cdot (1+ |\xi|)^{n_{0} + 2N}} \right) \\
&\leq \ds \frac{1} { \delta^{2N}}  \cdot \left(\ds \frac{\varepsilon \cdot B_{\left\{2N \right\}} \cdot C_{\left\{2N \right\}}}{\delta} \right)^{n_{0}}  \cdot \ds \frac{1}{(1+ |\xi|)^{2N}}
\end{align*}

Note that $n_{0}$ does depend on $\xi$ and $\varepsilon$. Nevertheless, if we choose $\varepsilon = \ds \frac{\delta}{C_{\left\{2N \right\}} \cdot B_{\left\{2N \right\}}}$ the dependence on $n_{0}$ disappears. Therefore if $C_{2} = \left(\ds \frac{(2\pi)^N \cdot C_{\varepsilon} \cdot C_{1}} {\delta^{2N}} \right)$, 
$$|\<s_m - s_k, \varphi \>| \leq C_{2} \cdot \ds \sum_{k+1 \leq |\xi| \leq m} \ds \frac{1}{(1+ |\xi|)^{2N}}.$$
Since the series on the right-hand side converges, $ \ds \lim_{k \to \infty} |\<s_m - s_k, \varphi\>| = 0$. So $\<s_{j}, \varphi \>$ is a Cauchy sequence for every $\varphi \in \E_{\M}(\T^N)$.  By the Lemma \ref{Magnolia} $u :=\ds \sum_{\xi \in \Z^{N}} a_{\xi} \cdot e^{i x \xi}$ is an element of $\D'_{\M}(\T^N)$.  It is easy to check that $\hat{u}(\xi) = a_{\xi}$. 
\end{proof}

\begin{Teo} \label{Shutter Island} 
Let $u \in \D_{\M}'(\T^N)$. Then 

$$u = \ds \sum_{\xi \in \Z^N} \hat{u}(\xi) \cdot e^{i x \xi},$$

\noindent with convergence in  $\D_{\M}'(\T^N)$. 
\end{Teo}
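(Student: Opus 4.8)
The plan is to reduce the statement to Theorem \ref{Memento}. First I would invoke Theorem \ref{Dunkirk}: since $u \in \D'_\M(\T^N)$, its Fourier coefficients satisfy, for every $\varepsilon > 0$, an estimate
$$|\hat u(\xi)| \leq C_\varepsilon \cdot \ds \sup_{n \in \N_0} \left( \ds \frac{\varepsilon^n \cdot (1+|\xi|)^n}{m_n \cdot n!} \right), \ \ \forall \xi \in \Z^N.$$
Thus the sequence $\{\hat u(\xi)\}_{\xi \in \Z^N}$ is exactly of the type considered in Theorem \ref{Memento}, so the series $v := \ds \sum_{\xi \in \Z^N} \hat u(\xi) \cdot e^{ix\xi}$ defines an element of $\D'_\M(\T^N)$ with $\hat v(\xi) = \hat u(\xi)$ for all $\xi$, and moreover the partial sums $s_j = \sum_{|\xi| \leq j} \hat u(\xi) e^{ix\xi}$ converge to $v$ in $\D'_\M(\T^N)$.

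It then remains to show $u = v$, i.e. that an ultradistribution is determined by its Fourier coefficients. For this I would fix an arbitrary test function $\varphi \in \E_\M(\T^N)$ and compute $\<v, \varphi\>$ by passing the pairing through the limit of partial sums (legitimate since $s_j \to v$ in $\D'_\M(\T^N)$), obtaining
$$\<v, \varphi\> = \ds \lim_{j \to \infty} \<s_j, \varphi\> = \ds \lim_{j \to \infty} \ds \sum_{|\xi| \leq j} \hat u(\xi) \cdot \<e^{ix\xi}, \varphi\> = (2\pi)^N \ds \sum_{\xi \in \Z^N} \hat u(\xi) \cdot \hat\varphi(-\xi).$$
On the other hand, by Theorem \ref{Kramer vs Kramer} we have $\varphi = \sum_{\xi} \hat\varphi(\xi) e^{ix\xi}$ with convergence in $\E_\M(\T^N)$, so applying the continuous functional $u$ and using that $\<u, e^{ix\xi}\> = (2\pi)^N \hat u(-\xi)$,
$$\<u, \varphi\> = \ds \sum_{\xi \in \Z^N} \hat\varphi(\xi) \cdot \<u, e^{ix\xi}\> = (2\pi)^N \ds \sum_{\xi \in \Z^N} \hat\varphi(\xi) \cdot \hat u(-\xi).$$
Reindexing $\xi \mapsto -\xi$ in one of the two sums shows $\<u, \varphi\> = \<v, \varphi\>$; since $\varphi$ was arbitrary, $u = v$, which is the claim.

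The main obstacle is the justification of interchanging $u$ with the infinite sum $\sum_\xi \hat\varphi(\xi) e^{ix\xi}$; this is precisely where the convergence-in-$\E_\M(\T^N)$ assertion of Theorem \ref{Kramer vs Kramer} is essential, together with continuity of $u$ as expressed by Theorem \ref{The Hurt Locker}\eqref{The Untouchables}, so the partial sums $S_k\varphi \to \varphi$ force $\<u, S_k\varphi\> \to \<u, \varphi\>$. Everything else — the interchange of $\<s_j, \varphi\>$ with $\lim_j$, and the elementary reindexing — is routine, and the hard analytic content (the growth bounds needed to make the various series converge) has already been absorbed into Theorems \ref{Memento}, \ref{Dunkirk} and \ref{Kramer vs Kramer}.
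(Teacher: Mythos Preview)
Your proof is correct and follows essentially the same route as the paper: invoke Theorem \ref{Memento} (with the hypothesis supplied by Theorem \ref{Dunkirk}) to define the candidate series $v$, then use the convergence of $S_k\varphi \to \varphi$ in $\E_\M(\T^N)$ from Theorem \ref{Kramer vs Kramer} together with continuity of $u$ to conclude $u = v$. The only difference is that you spell out the Parseval-type computation $\<u,\varphi\> = (2\pi)^N\sum \hat u(-\xi)\hat\varphi(\xi) = \<v,\varphi\>$ explicitly, whereas the paper simply asserts that verifying $\<u,\varphi\> = \<\tilde u,\varphi\>$ is ``not difficult''.
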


\begin{proof}
By Theorem $\ref{Memento}$, $\tilde{u} := \ds \sum_{\xi \in \Z^N} \hat{u}(\xi) \cdot e^{i x \xi}$ belongs to $\D_{\M}'(\T^N)$. Thus we just need to show that $u = \tilde{u}$. Given $\varphi \in \E_{\M}(\T^N)$, it follows from Theorem $\ref{Kramer vs Kramer}$ that

$$\varphi(x) = \ds \sum_{\xi \in \Z^N} \hat{\varphi}(\xi) \cdot e^{i x \xi}. $$

We define again $S_{k}\varphi(x) = \ds \sum_{|\xi| \leq k} \hat{\varphi}(\xi) \cdot e^{ix \xi}$ and since $S_{k}\varphi \to \varphi$ in $\E_{\M}(\T^N)$, it is not difficult to check that $\<u, \varphi \>  = \<\tilde{u}, \varphi \>$ and therefore $u = \tilde{u}$. 
\end{proof}

\begin{Teo} \label{The Aviator}
Let $\left\{b_{\xi} \right\}_{\xi \in \Z^N}$ be a sequence of complex numbers and suppose the existence of $C, \delta >0$ satisfying
$$|b_{\xi}| \leq C \cdot \inf_{n \in \N_{0}} \left(\ds \frac{m_{n} \cdot n!}{\delta^{n} \cdot (1+|\xi|)^n  } \right), \ \ \ \forall \xi \in \Z^N. $$
Then there exists $\psi \in \E_{\M}(\T^N)$ such that
$$\psi(x) = \ds \sum_{\xi \in \Z^N} b_{\xi} \cdot e^{i x \xi},$$
\noindent with convergence in $\E_{\M}(\T^N)$. In addition,  $\widehat{\psi}(\xi) = b_{\xi}$. 
\end{Teo}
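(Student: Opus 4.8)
The plan is to reprise, essentially verbatim, the convergence argument from the proof of Theorem \ref{Kramer vs Kramer}, but run in reverse: I will show directly that the partial sums $S_k(x) := \sum_{|\xi|\le k} b_\xi\,e^{ix\xi}$ form a Cauchy sequence in one of the Banach spaces $\E_{\M,h}(\T^N)$. This is a legitimate place to work because each $S_k$ is a trigonometric polynomial, hence analytic, and therefore belongs to $C^\omega(\T^N)\subset\E_{\M}(\T^N)$.

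Fix $m>k$ and $\alpha\in\N_0^N$. Term-by-term differentiation gives $D^\alpha(S_m-S_k)(x)=\sum_{k+1\le|\xi|\le m} b_\xi\,\xi^\alpha e^{ix\xi}$, so that $|D^\alpha(S_m-S_k)(x)|\le\sum_{k+1\le|\xi|\le m}|b_\xi|\cdot|\xi|^{|\alpha|}$. The key step is to exploit the infimum in the hypothesis by choosing, for a given $\alpha$, the index $n=|\alpha|+2N$: this yields $|b_\xi|\le C\,\delta^{-(|\alpha|+2N)}\,m_{|\alpha|+2N}\,(|\alpha|+2N)!\,(1+|\xi|)^{-(|\alpha|+2N)}$, and after absorbing the factor $|\xi|^{|\alpha|}$ one is left with the summable tail $(1+|\xi|)^{-2N}$. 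Applying \eqref{Arrival} and \eqref{The Seventh Seal} to bound $m_{|\alpha|+2N}\le C_{\left\{2N\right\}}^{|\alpha|}\,m_{|\alpha|}$ and $(|\alpha|+2N)!\le B_{\left\{2N\right\}}^{|\alpha|}\,|\alpha|!$ for $|\alpha|\ge 1$ (the case $|\alpha|=0$ being handled separately by absorbing the constants $m_{2N}$, $(2N)!$ into the overall constant), I obtain, with $h':=\delta^{-1}\,B_{\left\{2N\right\}}\,C_{\left\{2N\right\}}$,
$$\bigl|D^\alpha(S_m-S_k)(x)\bigr|\le\Bigl[\tfrac{C}{\delta^{2N}}\,\textstyle\sum_{|\xi|\ge k+1}(1+|\xi|)^{-2N}\Bigr]\cdot (h')^{|\alpha|}\,m_{|\alpha|}\,|\alpha|!,$$
uniformly in $x\in\T^N$ and in $m>k$. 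Since $\sum_{\xi\in\Z^N}(1+|\xi|)^{-2N}<\infty$, the bracketed quantity tends to $0$ as $k\to\infty$, so $\|S_m-S_k\|_{\M,h'}\to 0$; that is, $\{S_k\}$ is Cauchy in the Banach space $\E_{\M,h'}(\T^N)$.

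Consequently $S_k$ converges to some $\psi\in\E_{\M,h'}(\T^N)\subset\E_{\M}(\T^N)$, and the continuity of the inclusion $\E_{\M,h'}(\T^N)\hookrightarrow\E_{\M}(\T^N)$ (Proposition \ref{Adaptation}) upgrades this to convergence in $\E_{\M}(\T^N)$, which is the first assertion. For the identification of the coefficients, convergence in $\E_{\M,h'}(\T^N)$ in particular forces uniform convergence on $\T^N$, hence $\widehat{\psi}(\xi)=\lim_k\widehat{S_k}(\xi)=b_\xi$, since $\widehat{S_k}(\xi)=b_\xi$ as soon as $k\ge|\xi|$. The only point that needs a moment of attention is that the Cauchy estimate be uniform in $m$; this is automatic here, because once the sum is enlarged to run over all $|\xi|\ge k+1$ the remaining tail no longer depends on $m$. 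I do not foresee any genuine obstacle beyond this bookkeeping.
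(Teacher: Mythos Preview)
Your argument is correct and follows essentially the same route as the paper: both exploit the hypothesis with the choice $n=|\alpha|+2N$, invoke \eqref{Arrival} and \eqref{The Seventh Seal} to pull the factors $m_{|\alpha|+2N}$ and $(|\alpha|+2N)!$ back to $m_{|\alpha|}$ and $|\alpha|!$, and arrive at the same constant $h'=\delta^{-1}B_{\{2N\}}C_{\{2N\}}$. The only organizational difference is that the paper first defines $\psi$ via convergence in $\E(\T^N)$ and then shows $\|\psi-S_j\psi\|_{\M,h'}\to 0$, whereas you prove directly that $\{S_k\}$ is Cauchy in $\E_{\M,h'}(\T^N)$ and invoke completeness; your version is marginally cleaner and you are also more explicit than the paper about the $|\alpha|=0$ case.
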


\begin{proof} Let $\psi: \T^N \to \C$ defined as  $\psi (x) = \ds \sum_{\xi \in \Z^N} b_{\xi} \cdot e^{i x \xi}.$ By hypothesis, $\psi$ is smooth  and the series converges in $\E(\T^N)$. For any $\alpha \in \N_{0}^{N}$,  
\begin{align*}
\left|D^{\alpha} \psi(x)\right| &\leq \ds \sum_{\xi \in \Z^{N}} (1+\left|\xi \right|)^{\left|\alpha \right|} \cdot \left|b_{\xi} \right| \\
& \leq \ds \sum_{\xi \in \Z^{N}} \left[C \cdot \ds \frac{m_{|\alpha| + 2N} \cdot (|\alpha| +2N)!}{\delta^{|\alpha| + 2N}} \right] \cdot \ds \frac{1}{(1+\left|\xi \right|)^{2N}} \\
& \leq \left(C \cdot \left(\ds \frac{1}{\delta} \right)^{2N} \cdot \ds \sum_{\xi \in \Z^{N}} \ds \frac{1}{(1+\left|\xi \right|)^{2N}} \right) \cdot \left(\ds \frac{C_{\left\{2N \right\}} \cdot B_{\left\{2N \right\}}}{\delta}  \right)^{|\alpha|} \cdot m_{|\alpha|} \cdot |\alpha|!, 
\end{align*}

\noindent which shows us that $\psi \in \E_{\M}(\T^N)$. By denoting $S_{j}\Psi(x) = \ds \sum_{|\xi| \leq j} b_{\xi} \cdot e^{i x \xi}$ for each $j \in \N$, 
$$\left|D^{\alpha} (\Psi - S_{j} \Psi)(x) \right| \leq \left(C \cdot \left(\ds \frac{1}{\delta} \right)^{2N} \cdot \ds \sum_{|\xi| \geq j+1} \ds \frac{1}{(1+\left|\xi \right|)^{2N}} \right) \cdot \left(\ds \frac{C_{\left\{2N \right\}} \cdot B_{\left\{2N \right\}}}{\delta}  \right)^{|\alpha|} \cdot m_{|\alpha|} \cdot |\alpha|!.$$
Therefore $\ds \lim_{j \to \infty} \left\|\Psi - S_j \Psi \right\|_{\M, h_{1}} = 0$, which implies that $S_{j} \Psi \to \Psi$ in $\E_{\M}(\T^N)$.  
\end{proof}

\begin{Cor} \label{Forrest Gump}
Let $u \in \D_{\M}'(\T^N)$; if one can find $C, \delta >0$ such that
$$|\hat{u}(\xi)|  \leq C \cdot \inf_{n \in \N_{0}} \left(\ds \frac {m_{n} \cdot n!}{\delta^{n} \cdot (1+|\xi|)^{n}} \right), \ \ \ \ \forall \xi \in \Z^N,$$
then $u \in \E_{\M}(\T^N)$.
\end{Cor}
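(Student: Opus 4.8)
The plan is to combine Theorem \ref{The Aviator} with the uniqueness of the Fourier expansion for ultradistributions furnished by Theorem \ref{Shutter Island}. Set $b_\xi := \hat{u}(\xi)$ for $\xi \in \Z^N$. By the hypothesis on $u$, the sequence $\{b_\xi\}_{\xi \in \Z^N}$ satisfies exactly the decay condition required in Theorem \ref{The Aviator}, so there exists $\psi \in \E_{\M}(\T^N)$ with
$$\psi(x) = \ds\sum_{\xi \in \Z^N} b_\xi \cdot e^{ix\xi},$$
the series converging in $\E_{\M}(\T^N)$, and with $\widehat{\psi}(\xi) = b_\xi = \widehat{u}(\xi)$ for every $\xi \in \Z^N$.

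Next I would check that $\psi$, regarded as a function, induces an element of $\D'_{\M}(\T^N)$, namely the functional $\varphi \mapsto \int_{\T^N} \psi(x)\varphi(x)\,\dd x$, whose Fourier coefficients as an ultradistribution coincide with $\widehat{\psi}(\xi)$. Continuity is immediate from Theorem \ref{The Hurt Locker}: since $\psi$ is continuous on $\T^N$, taking $\alpha = 0$ in \eqref{Minority Report} gives $\big|\int_{\T^N}\psi\varphi\big| \le (2\pi)^N \sup_{\T^N}|\psi|\cdot\sup_{\T^N}|\varphi|$, so the required estimate holds with $C_\varepsilon = (2\pi)^N\sup_{\T^N}|\psi|$ for every $\varepsilon > 0$; hence $\psi \in \D'_{\M}(\T^N)$. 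By the very definition of the Fourier coefficient of an ultradistribution, $\langle\psi, e^{-ix\xi}\rangle = \int_{\T^N}\psi(x)e^{-ix\xi}\,\dd x = (2\pi)^N\widehat{\psi}(\xi)$, so the two notions of Fourier coefficient for $\psi$ agree, and thus $\psi$ and $u$ have the same Fourier coefficients.

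Finally I would invoke Theorem \ref{Shutter Island} for both $u$ and $\psi$: each equals the sum of its own Fourier series in $\D'_{\M}(\T^N)$, and since $\widehat{u}(\xi) = \widehat{\psi}(\xi)$ for all $\xi$, these two series coincide, so $u = \psi$ in $\D'_{\M}(\T^N)$, which is precisely the assertion $u \in \E_{\M}(\T^N)$. I do not anticipate any genuine obstacle; the only point deserving care is the (routine) verification that the canonical inclusion $\E_{\M}(\T^N) \hookrightarrow \D'_{\M}(\T^N)$ is well defined and Fourier-coefficient preserving. This could also be bypassed entirely: writing $\varphi = \ds\sum_{\xi}\widehat{\varphi}(\xi)e^{ix\xi}$ as in Theorem \ref{Kramer vs Kramer}, convergence in $\E_{\M}(\T^N)$ together with the continuity of $u$ yields $\langle u,\varphi\rangle = (2\pi)^N\ds\sum_{\xi}\widehat{u}(-\xi)\widehat{\varphi}(\xi)$, and expanding $\int_{\T^N}\psi\varphi\,\dd x$ against the ($L^1$-convergent) series for $\psi$ gives the same quantity, so $\langle u,\varphi\rangle = \int_{\T^N}\psi(x)\varphi(x)\,\dd x$ for all $\varphi \in \E_{\M}(\T^N)$.
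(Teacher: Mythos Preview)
Your proof is correct and follows exactly the approach implicit in the paper: the corollary is stated without proof immediately after Theorem~\ref{The Aviator}, the intended argument being to apply that theorem to $b_\xi := \hat u(\xi)$ and then invoke the uniqueness of the Fourier expansion (Theorem~\ref{Shutter Island}) to identify $u$ with the resulting $\psi \in \E_{\M}(\T^N)$. Your additional care in verifying that the inclusion $\E_{\M}(\T^N) \hookrightarrow \D'_{\M}(\T^N)$ is well defined and Fourier-coefficient preserving is a welcome bit of rigor that the paper leaves tacit.
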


\begin{Obs}
The condition \eqref{Prisoners} was not used in the proofs of this section. We instead applied repeatedly \eqref{Arrival}, which is weaker. So those results remain valid for more general classes. 
\end{Obs}

\section{Partial Fourier Series}\label{The Phantom Menace} 

Let $R, S \in \N$ such that $R+S = N$; we denote $(t,x) \in \T^N$, where $t \in \T^{R}$ and $x \in \T^{S}$. Given $\varphi \in \E_{\M}(\T^N)$, consider for each $t \in \T^{R}$ 
\begin{eqnarray*}
	\varphi_{t}: \T^{S} &\to & \C  \\
	x & \mapsto & \varphi(t,x).
\end{eqnarray*}
Then $\varphi_{t} \in \E_{\M}(\T^{S})$ for any $t$ and it follows from Theorem \ref{Kramer vs Kramer} that

$$\varphi(t,x) = \varphi_{t}(x) = \ds \sum_{\eta \in \Z^S} \widehat{\varphi}(t, \eta) \cdot e^{i x \eta}, $$

\noindent with convergence in $\E_{\M}(\T^{S})$, and
$$\widehat{\varphi}(t, \eta) = \ds \frac{1}{(2 \pi)^{S}} \ds \int_{\T^{S}} \varphi_{t}(x)  \cdot e^{-ix \eta} dx = \ds \frac{1}{(2 \pi)^{S}} \ds \int_{\T^{S}} \varphi (t, x) \cdot e^{-ix \eta} dx.$$

It is easy to check that $\widehat{\varphi}(t, \eta) \in \E_{\M}(\T^{R})$; let us prove a sharper estimate. 

\begin{Teo} \label{Children of Men}
Given $\varphi \in \E_{\M}(\T^N)$, there exists  $C, h, \delta > 0$ such that	
$$|\partial_{t}^{\alpha}\widehat{\varphi}(t, \eta)| \leq C \cdot h^{|\alpha|} \cdot m_{|\alpha|} \cdot |\alpha|! \cdot \left(\ds \inf_{p \in \N_{0}} \ds \frac{m_{p} \cdot p!}{\delta^{p} \cdot (1+|\eta|)^{p}} \right), \ \ \ \forall t \in \T^{R}, \ \ \forall \alpha \in \N_{0}^{R}, \ \ \forall \eta \in \Z^{S}.$$
\end{Teo}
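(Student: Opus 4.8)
The strategy is to estimate the $t$-derivatives of $\widehat{\varphi}(t,\eta)$ directly from the integral formula
$$
\partial_t^\alpha \widehat{\varphi}(t,\eta) = \frac{1}{(2\pi)^S} \int_{\T^S} \partial_t^\alpha \varphi(t,x) \cdot e^{-ix\eta}\, dx,
$$
which is valid since $\varphi \in \E(\T^N)$ allows differentiation under the integral sign. Fix $C_0, h_0 > 0$ with $|D^{\beta}\varphi(t,x)| \leq C_0 \cdot h_0^{|\beta|} \cdot m_{|\beta|} \cdot |\beta|!$ for all $\beta \in \N_0^N$. The point is that for a multi-index $\alpha \in \N_0^R$ acting on the $t$-variables and any extra multi-index $\gamma \in \N_0^S$ acting on the $x$-variables, the combined index $(\alpha,\gamma) \in \N_0^N$ has length $|\alpha| + |\gamma|$, and we will use integration by parts in $x$ to pull down powers of $\eta$.

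First I would treat $\eta = 0$ (or $\eta$ in any bounded set) trivially: $|\partial_t^\alpha \widehat{\varphi}(t,0)| \leq \sup |\partial_t^\alpha \varphi| \leq C_0 h_0^{|\alpha|} m_{|\alpha|} |\alpha|!$, so the $\eta$-factor can be absorbed by enlarging $\delta$. For $\eta \neq 0$, integrate by parts in $x$: for any $\gamma \in \N_0^S$,
$$
\eta^{\gamma} \cdot \partial_t^\alpha \widehat{\varphi}(t,\eta) = \frac{(-1)^{|\gamma|}}{(2\pi)^S \cdot i^{|\gamma|}} \int_{\T^S} \partial_t^\alpha \partial_x^\gamma \varphi(t,x) \cdot e^{-ix\eta}\, dx,
$$
hence $|\eta^\gamma| \cdot |\partial_t^\alpha \widehat\varphi(t,\eta)| \leq C_0 \cdot h_0^{|\alpha|+|\gamma|} \cdot m_{|\alpha|+|\gamma|} \cdot (|\alpha|+|\gamma|)!$. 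Now fix $p \in \N_0$ and sum over $|\gamma| = p$ using $|\eta|^p \leq \sum_{|\gamma|=p} \frac{p!}{\gamma!}|\eta^\gamma|$ and $\sum_{|\gamma|=p}\frac{p!}{\gamma!} = S^p$, which gives
$$
|\eta|^{p} \cdot |\partial_t^\alpha \widehat\varphi(t,\eta)| \leq C_0 \cdot (h_0 S)^{p} \cdot h_0^{|\alpha|} \cdot m_{|\alpha|+p} \cdot (|\alpha|+p)!.
$$
The main obstacle — and the only place the structure of weight sequences really enters — is to split $m_{|\alpha|+p}$ and $(|\alpha|+p)!$ into a product of an $|\alpha|$-part and a $p$-part with geometric constants. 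For the factorials, $(|\alpha|+p)! \leq 2^{|\alpha|+p} |\alpha|!\, p!$. For $m_{|\alpha|+p}$, I would invoke the Moderate Growth condition \eqref{Prisoners}, which gives $m_{|\alpha|+p} \leq H^{|\alpha|+p} m_{|\alpha|} m_{p}$ (this is exactly what \eqref{Prisoners} says after clearing the root). Substituting,
$$
|\eta|^{p} \cdot |\partial_t^\alpha \widehat\varphi(t,\eta)| \leq C_0 \cdot \big(h_0\,m_{|\alpha|}\,|\alpha|!\,(2H)^{|\alpha|}\big) \cdot \big((2h_0 S H)^{p}\, m_p\, p!\big).
$$

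To finish, incorporate the $(1+|\eta|)$ normalization: since $\eta \neq 0$, $(1+|\eta|)^p \leq 2^p |\eta|^p$, so with $K := 2 \cdot 2 h_0 S H = 4 h_0 S H$ we get $(1+|\eta|)^p |\partial_t^\alpha\widehat\varphi(t,\eta)| \leq C_0 (2H h_0)^{|\alpha|} m_{|\alpha|}|\alpha|! \cdot K^p m_p p!$. Dividing and taking the infimum over $p \in \N_0$ (including $p=0$, which recovers the crude bound and merges with the $\eta=0$ case) yields
$$
|\partial_t^\alpha \widehat{\varphi}(t,\eta)| \leq C \cdot h^{|\alpha|} \cdot m_{|\alpha|} \cdot |\alpha|! \cdot \inf_{p \in \N_0} \frac{m_p \cdot p!}{\delta^p \cdot (1+|\eta|)^p}
$$
with $C = C_0$, $h = 2Hh_0$, and $\delta = 1/K = 1/(4 h_0 S H)$, uniformly in $t \in \T^R$, $\alpha \in \N_0^R$ and $\eta \in \Z^S$. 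This is the claimed estimate. The only subtlety worth double-checking is the uniformity of the constants — they come solely from $C_0, h_0$ (the single pair witnessing $\varphi \in \E_\M(\T^N)$), the dimension $S$, and the fixed $H$ from \eqref{Prisoners}, so there is no hidden dependence on $\alpha$, $\eta$, or $t$.
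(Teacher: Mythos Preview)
Your proof is correct and follows essentially the same route as the paper: differentiate under the integral, use the Fourier multiplier identity $\eta^\gamma \partial_t^\alpha\widehat\varphi = \widehat{\partial_t^\alpha D_x^\gamma\varphi}$, then split $m_{|\alpha|+p}$ via Moderate Growth \eqref{Prisoners} and $(|\alpha|+p)!$ via the binomial bound. The only cosmetic difference is that the paper passes from $|\eta^\gamma|$ to $|\eta|^p$ by selecting a single multi-index $\beta$ with $|\eta|^p/|\eta^\beta|\le N^{p/2}$, whereas you sum over all $|\gamma|=p$ with the multinomial identity to pick up a factor $S^p$; both yield the same structure with slightly different constants.
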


\begin{proof} 
Note first that
$$\partial_{t}^{\alpha}\widehat{\varphi}(t, \eta) = \ds \frac{1}{(2 \pi)^{S}} \ds \int_{\T^{S}} \partial_{t}^{\alpha}\varphi (t, x) \cdot e^{-ix \eta} dx = \widehat{\partial_{t}^{\alpha}\varphi}(t, \eta). $$	
\noindent Thus $\partial_{t}^{\alpha} \widehat{D_{x}^{\beta} \varphi}(x, \eta) = \eta^{\beta} \cdot \partial_{t}^{\alpha}\widehat{\varphi}(t, \eta)$ and 	
\begin{align*}
|\eta^{\beta} \cdot \partial_{t}^{\alpha}\widehat{\varphi}(t, \eta)| &\leq \ds \sup_{(t,x) \in \T^N} \left|\partial_{t}^{\alpha} D_{x}^{\beta}\varphi (t, x) \right| \\
&\leq \left\| \varphi \right\|_{\M, h_{1}} \cdot h_{1}^{|\alpha| + |\beta|} \cdot m_{|\alpha| + |\beta|} \cdot (|\alpha| + |\beta|)! \\
& \leq \left\| \varphi \right\|_{\M, h_{1}} \cdot \left[(2h_{1}H)^{|\alpha|} \cdot m_{|\alpha|} \cdot |\alpha|! \right] \cdot \left[(2h_{1}H)^{|\beta|} \cdot m_{|\beta|} \cdot |\beta|! \right], 
\end{align*}
\noindent applying \eqref{Prisoners}.
		
Given $\eta \neq 0$ and $p \in \N_{0}$ we choose $\beta \in \N_{0}^{S}$ such that $|\beta| = p$ and $\ds \frac{|\eta|^{p}}{|\eta^{\beta}|} \leq N^{p/2}$ . Thus
	
$$(1 + |\eta|)^{p} \cdot |\partial_{t}^{\alpha}\widehat{\varphi}(t, \eta)| \leq \left\| \varphi \right\|_{\M, h_{1}} \cdot \left[(2h_{1}H)^{|\alpha|} \cdot m_{|\alpha|} \cdot |\alpha|! \right] \cdot \left[(4h_{1}H \sqrt{N})^{p} \cdot m_{p} \cdot p! \right].$$
	
\noindent Therefore, by taking $h = 2h_{1}H$,  $\delta = \ds \frac{1}{4h_{1}H \sqrt{N}}$ and $C =  \left\| \varphi \right\|_{\M, h_{1}} \cdot \left(\ds \inf_{p \in \N_{0}} \ds \frac{m_{p} \cdot p!}{\delta^{p}} \right)^{-1}$, we also include the case $\eta = 0$. 
\end{proof}

\begin{Teo} \label{Brooklyn}
Suppose that for each $\eta \in \Z^{S}$ there exists a function $\varphi_{\eta}$ in $\E_{\M}(\T^R)$ and positive constants $C, h, \delta$ such that 	
$$|\partial_{t}^{\alpha} \varphi_{\eta}(t)| \leq C \cdot h^{|\alpha|} \cdot m_{|\alpha|} \cdot |\alpha|! \cdot \ds \inf_{p \in \N_{0}} \left( \ds \frac{m_{p} \cdot p!}{\delta^{p} \cdot (1+|\eta|)^{p}} \right), \ \ \ \forall t \in \T^{R}, \ \ \forall \alpha \in \N_{0}^{R}, \ \ \forall \eta \in \Z^{S}.$$
	
\noindent Then $\varphi: \T^N \to \C$ defined as $\varphi(t,x) = \ds \sum_{\eta \in \Z^{S}} \varphi_{\eta}(t) \cdot e^{ix \eta}$ is an element of $ \E_{\M}(\T^N)$. 
\end{Teo}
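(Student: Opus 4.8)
The plan is to show that the series defining $\varphi$ converges in $\E_{\M}(\T^N)$, so that $\varphi$ is a well-defined smooth function and the Denjoy-Carleman estimates are inherited from the partial sums in the limit. First I would fix a tolerance exponent: set $q = 2S$ (so that $\sum_{\eta \in \Z^S}(1+|\eta|)^{-q} < +\infty$), and use the hypothesis with $p$ chosen near $|\alpha|+q$ to absorb the polynomial factor $(1+|\eta|)^{|\alpha|}$ coming from differentiating $e^{ix\eta}$ in $x$. Concretely, for a multi-index $\gamma = (\alpha,\beta) \in \N_0^R \times \N_0^S$, termwise differentiation gives
$$\left|\partial_t^{\alpha} \partial_x^{\beta}\big(\varphi_{\eta}(t) e^{ix\eta}\big)\right| \leq C \cdot h^{|\alpha|} \cdot m_{|\alpha|} \cdot |\alpha|! \cdot (1+|\eta|)^{|\beta|} \cdot \ds \inf_{p \in \N_0}\left(\ds \frac{m_p \cdot p!}{\delta^p (1+|\eta|)^p}\right).$$
Choosing $p = |\beta| + 2S$ in the infimum bounds the $\eta$-dependent part by $\delta^{-2S}\,(\delta^{-1})^{|\beta|}\, m_{|\beta|+2S}\,(|\beta|+2S)!\,(1+|\eta|)^{-2S}$. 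Then I would invoke \eqref{Arrival} and \eqref{The Seventh Seal} (with $k = 2S$) to replace $m_{|\beta|+2S}$ by $m_{|\beta|}\, C_{\{2S\}}^{|\beta|}$ and $(|\beta|+2S)!$ by $|\beta|!\, B_{\{2S\}}^{|\beta|}$, up to a constant depending only on $S$ and $\delta$.

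Summing over $\eta \in \Z^S$, the factor $(1+|\eta|)^{-2S}$ gives a finite constant $A_S := \sum_{\eta}(1+|\eta|)^{-2S}$, and one obtains, for a new constant $C' = C A_S \delta^{-2S}$ and $h' = \max\{h,\ \delta^{-1} B_{\{2S\}} C_{\{2S\}}\}$,
$$\left|D^{\gamma}\varphi(t,x)\right| \leq C' \cdot (h')^{|\alpha|+|\beta|} \cdot m_{|\alpha|} \cdot |\alpha|! \cdot m_{|\beta|} \cdot |\beta|! \leq C' \cdot (h')^{|\gamma|} \cdot m_{|\gamma|} \cdot |\gamma|!,$$
where the last inequality uses Proposition \ref{Social Network}\ref{On the Waterfront} ($m_{|\alpha|} m_{|\beta|} \leq m_{|\gamma|}$) together with $|\alpha|!\,|\beta|! \leq |\gamma|!$. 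This simultaneously shows that the series and all its termwise derivatives converge absolutely and uniformly (so $\varphi \in \E(\T^N)$ and differentiation commutes with summation), and that $\varphi \in \E_{\M, h'}(\T^N) \subset \E_{\M}(\T^N)$. As in the proofs of Theorems \ref{Kramer vs Kramer} and \ref{The Aviator}, replacing the full sum $\sum_{\eta}$ by the tail $\sum_{|\eta| \geq j+1}$ shows that the partial sums converge to $\varphi$ in the norm $\|\cdot\|_{\M, h'}$, hence in $\E_{\M}(\T^N)$.

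The only mildly delicate point is the bookkeeping of which index to plug into the infimum: the infimum over $p$ is attained at an index depending on $\eta$ (Remark \ref{Apollo 13}), but since an infimum is bounded above by any particular value, I am free to substitute $p = |\beta|+2S$ uniformly, and that is exactly what makes the $\eta$-sum converge while keeping the $\beta$-dependence of the right form. There is no circularity with Lemma \ref{Jaws} or condition \eqref{Prisoners} here — as the preceding remark notes, only the weaker \eqref{Arrival} is needed — and the argument is essentially a parametrized version of Theorem \ref{The Aviator}, with the extra variable $t$ carried along harmlessly via the factor $h^{|\alpha|} m_{|\alpha|} |\alpha|!$.
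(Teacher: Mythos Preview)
Your proposal is correct and follows essentially the same route as the paper's proof: differentiate termwise, bound the infimum by its value at $p = |\beta| + (\text{fixed integer})$, invoke \eqref{Arrival} and \eqref{The Seventh Seal}, sum in $\eta$, and combine $m_{|\alpha|} m_{|\beta|} \leq m_{|\gamma|}$ with $|\alpha|!\,|\beta|! \leq |\gamma|!$. The only cosmetic difference is that the paper takes the summability exponent $2N$ rather than your $2S$ (both work since $\eta \in \Z^S$), and the paper does not spell out the convergence of partial sums in $\|\cdot\|_{\M,h'}$, which you add as a harmless complement.
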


\begin{proof}
It is easy to check that $\varphi$ is well defined. Let us show that $\varphi \in \E_{\M}(\T^N)$: 
\begin{align*}
\left|\partial_{t}^{\alpha_{1}} \partial_{x}^{\alpha_{2}} \varphi(t,x)\right| &\leq \ds \sum_{\eta \in \Z^{S}} \left|\partial_{t}^{\alpha_{1}} \varphi_{\eta}(t) \right| \cdot (1+|\eta|)^{|\alpha_{2}|} \\  
&\leq  \ds \sum_{\eta \in \Z^{S}}  C \cdot h^{|\alpha_{1}|} \cdot m_{|\alpha_{1}|} \cdot |\alpha_{1}|! \left(\ds \frac{m_{|\alpha_{2}| + 2N} \cdot (|\alpha_{2}| + 2N)!}{\delta^{|\alpha_{2}| + 2N} \cdot (1+|\eta|)^{|\alpha_{2}| + 2N}} \right) \cdot (1+|\eta|)^{|\alpha_{2}|} \\
&\leq \ds \frac{C}{\delta^{2N}}\cdot  h^{|\alpha_{1}|}  \left(\ds \frac{B_{\left\{2N \right\}} \cdot C_{\left\{2N \right\}}}{\delta} \right)^{|\alpha_{2}|} m_{|\alpha_{1}+\alpha_{2}|} \cdot (|\alpha_{1}+\alpha_{2}|)!  \ds \sum_{\eta \in \Z^{S}} \ds \frac{1}{(1+|\eta|)^{2N}}.
\end{align*}
Thus if we take $C_{1} = \ds \frac{C}{\delta^{2N}} \cdot \ds \sum_{\eta \in \Z^{S}} \ds \frac{1}{(1+|\eta|)^{2N}}$ and $h_{1} = \max \left\{h, \ds \frac{B_{\left\{2N \right\}} \cdot C_{\left\{2N \right\}}}{\delta}  \right\}$,  
	
$$\left|\partial_{t}^{\alpha_{1}} \partial_{x}^{\alpha_{2}} \varphi(t,x)\right| \leq C_{1} \cdot h_{1}^{|\alpha_{1}+\alpha_{2}|} \cdot m_{|\alpha_{1}+\alpha_{2}|} \cdot (|\alpha_{1}+\alpha_{2}|)!,$$
which ends our proof.	
\end{proof}

The next step is to develop Partial Fourier Series for ultradistributions. Given $u \in \D_{\M}'(\T^N)$, we define for each  $\psi \in \E_{\M}(\T^R)$
\begin{eqnarray*}
	u_{\psi}: \T^{S} &\to & \C  \\
	\varphi & \mapsto & \<u, \psi \otimes \varphi \>, 
\end{eqnarray*}
where $\psi \otimes \varphi (t,x) = \psi(t) \cdot \varphi(x)$.

\begin{Lem} \label{Amour}
For every $u \in \D_{\M}(\T^N), \psi \in \E_{\M}(\T^R)$, we have that $u_{\psi} \in \D_{\M}'(\T^S)$. 
\end{Lem}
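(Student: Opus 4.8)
The plan is to verify that $u_\psi : \E_{\M}(\T^S) \to \C$ is a well-defined continuous linear functional by reducing its continuity to that of $u$ itself, via the tensor-product map $\varphi \mapsto \psi \otimes \varphi$. Linearity of $u_\psi$ is immediate from the linearity of $u$ and of the tensor product in the second variable, so the content is continuity. By the characterization in Theorem \ref{The Hurt Locker}, it suffices to check condition \eqref{The Untouchables}: if $\varphi_n \to 0$ in $\E_{\M}(\T^S)$, then $\<u_\psi, \varphi_n\> = \<u, \psi \otimes \varphi_n\> \to 0$. Since $u \in \D'_{\M}(\T^N)$ already satisfies \eqref{The Untouchables}, this will follow once I show that $\psi \otimes \varphi_n \to 0$ in $\E_{\M}(\T^N)$.

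So the key step is: the bilinear map $(\psi, \varphi) \mapsto \psi \otimes \varphi$ sends $\E_{\M}(\T^R) \times \E_{\M}(\T^S)$ continuously into $\E_{\M}(\T^N)$, and in fact maps bounded $\times$ convergent-to-zero sequences to convergent-to-zero sequences. Concretely, fix $\psi \in \E_{\M}(\T^R)$, so there are $C_\psi, h_\psi > 0$ with $|\partial_t^{\alpha_1}\psi(t)| \leq C_\psi \, h_\psi^{|\alpha_1|} m_{|\alpha_1|}|\alpha_1|!$. Writing a multi-index on $\T^N$ as $\alpha = (\alpha_1,\alpha_2)$ with $\alpha_1 \in \N_0^R$, $\alpha_2 \in \N_0^S$, we have $\partial^\alpha(\psi\otimes\varphi)(t,x) = \partial_t^{\alpha_1}\psi(t)\cdot\partial_x^{\alpha_2}\varphi(x)$, hence
$$|\partial^\alpha(\psi\otimes\varphi)(t,x)| \leq C_\psi\, h_\psi^{|\alpha_1|} m_{|\alpha_1|}|\alpha_1|! \cdot |\partial_x^{\alpha_2}\varphi(x)|.$$
Now I estimate the product $m_{|\alpha_1|}|\alpha_1|!\cdot m_{|\alpha_2|}|\alpha_2|!$ from below in terms of $m_{|\alpha|}|\alpha|!$: by Proposition \ref{Social Network}\eqref{On the Waterfront} we have $m_{|\alpha_1|} m_{|\alpha_2|} \leq m_{|\alpha_1|+|\alpha_2|} = m_{|\alpha|}$, and $|\alpha_1|!\,|\alpha_2|! \leq |\alpha|!$; so if $\varphi \in \E_{\M,h}(\T^S)$ with norm $\|\varphi\|_{\M,h}$, then taking $h' = \max\{h_\psi, h\}$ gives
$$|\partial^\alpha(\psi\otimes\varphi)(t,x)| \leq C_\psi\,\|\varphi\|_{\M,h}\cdot (h')^{|\alpha_1|+|\alpha_2|} m_{|\alpha|}|\alpha|!,$$
which shows $\psi\otimes\varphi \in \E_{\M,h'}(\T^N)$ with $\|\psi\otimes\varphi\|_{\M,h'} \leq C_\psi\|\varphi\|_{\M,h}$.

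With this estimate in hand the conclusion is routine. Given $\varphi_n \to 0$ in $\E_{\M}(\T^S)$, by the DFS property there is a fixed $p$ with all $\varphi_n \in \E_{\M,h_p}(\T^S)$ and $\|\varphi_n\|_{\M,h_p}\to 0$; then $\psi\otimes\varphi_n \in \E_{\M,h'}(\T^N)$ for the fixed index $h' = \max\{h_\psi,h_p\}$ with $\|\psi\otimes\varphi_n\|_{\M,h'} \leq C_\psi\|\varphi_n\|_{\M,h_p}\to 0$, so $\psi\otimes\varphi_n\to 0$ in $\E_{\M}(\T^N)$. Since $u$ is continuous (Theorem \ref{The Hurt Locker}\eqref{The Untouchables}), $\<u_\psi,\varphi_n\> = \<u,\psi\otimes\varphi_n\>\to 0$, and another application of Theorem \ref{The Hurt Locker} gives $u_\psi\in\D'_{\M}(\T^S)$. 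I expect the only mild subtlety to be the bookkeeping with the split multi-indices and making sure the inequalities $m_{|\alpha_1|}m_{|\alpha_2|}\le m_{|\alpha|}$ and $|\alpha_1|!\,|\alpha_2|!\le|\alpha|!$ are invoked correctly; everything else is a direct unwinding of definitions.
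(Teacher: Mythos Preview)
Your proof is correct and follows essentially the same approach as the paper: reduce to showing $\psi\otimes\varphi_n\to 0$ in $\E_{\M}(\T^N)$ via the DFS description of convergence, then use the factorization $\partial^{(\alpha_1,\alpha_2)}(\psi\otimes\varphi)=\partial_t^{\alpha_1}\psi\cdot\partial_x^{\alpha_2}\varphi$ together with $m_{|\alpha_1|}m_{|\alpha_2|}\le m_{|\alpha|}$ and $|\alpha_1|!\,|\alpha_2|!\le|\alpha|!$ to control the tensor norm by $\|\psi\|_{\M,h_\psi}\cdot\|\varphi_n\|_{\M,h}$. The only cosmetic difference is that you explicitly cite Proposition~\ref{Social Network}\eqref{On the Waterfront} for the $m$-inequality, whereas the paper leaves this step implicit.
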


\begin{proof}
Let $\left\{\varphi_{n} \right\}_{n \in \N}$ be a sequence in $\E_{\M}(\T^S)$ converging to $0$. Since $u$ is continuous, it is sufficient to show that $\psi \otimes \varphi_{n} \to 0$ in $\E_{\M}(\T^N)$. By our assumption, one can find $h_{S} > 0$ such that	
$$\varphi_{n} \in \E_{\M, h_{S}}(\T^S), \ \forall n \in \N; \ \  \left\|\varphi_{n} \right\|_{\M, h_{S}} \to 0. $$
	
Moreover, there exists $h_{R} > 0$ such that $\psi \in \E_{\M, h_{R}}(\T^R)$. Thus
\begin{align*}
\ds \sup_{(t, x) \in \T^N} \left|\partial_{t}^\alpha \partial_{x}^\beta (\psi(t) \cdot \varphi(x)) \right| &\leq \ds \sup_{t \in \T^{R}} \left|\partial_{t}^\alpha \psi(t) \right| \cdot \ds \sup_{x \in \T^{S}} \left|\partial_{x}^\beta \varphi(x) \right| \\
& \leq \left\| \psi \right\|_{\M, h_{R}} \cdot h_{R}^{|\alpha|} \cdot m_{|\alpha|} \cdot |\alpha|! \cdot \left\|\varphi_{n} \right\|_{\M, h_{S}} \cdot h_{S}^{|\beta|} \cdot m_{|\beta|} \cdot |\beta|!. 
\end{align*}

By taking $h = \max \left\{h_{R}, h_{S} \right\}$, it follows from the inequality above that 
	$$\psi \otimes \varphi_{n} \in \E_{\M, h}(\T^N), \ \ \forall n \in \N.$$
	$$ \left\| \psi \otimes \varphi_{n} \right\|_{\M, h} \leq \left\|\psi \right\|_{\M, h_{R}} \cdot \left\|\varphi_{n} \right\|_{\M, h_{S}} \to 0, $$ 
	
\noindent which proves the result. 
\end{proof}

By Theorem \ref{Shutter Island}, we can write  

$$u_{\psi} = \ds \sum_{\eta \in \Z^{S}} \widehat{u_{\psi}}(\eta) \cdot e^{i x \eta}, $$

\noindent where $\widehat{u_{\psi}}(\eta) = \ds \frac{1}{(2 \pi)^{S}} \cdot \< u_{\psi}, e^{-ix \eta} \> = \ds \frac{1}{(2 \pi)^{S}} \cdot \< u, \psi(t) \cdot e^{-ix \eta} \>.$ Now for each $\eta \in \Z^{S}$ we define the following functional:

\begin{equation} \label{12 Years a Slave}
\<\widehat{u}(t, \eta), \psi \> = \widehat{u_{\psi}}(\eta) = \ds \frac{1}{(2 \pi)^{S}} \cdot \< u, \psi(t) \cdot e^{-ix \eta} \>. 
\end{equation}

\begin{Obs}
The notation used  above is analogous to the one applied for ultradifferentiable functions,  but $\widehat{u}(t, \eta)$ is not necessarily a function.  
\end{Obs}

\begin{Lem} \label{Carol}
For every $\eta \in \Z^{S}$, $\widehat{u}(t, \eta)$ defined in \eqref{12 Years a Slave} belongs to $\D_{\M}'(\T^{R})$.   
\end{Lem}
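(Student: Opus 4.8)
The plan is to verify that the functional $\psi\mapsto\langle\widehat u(t,\eta),\psi\rangle$ defined in \eqref{12 Years a Slave} is linear and continuous on $\E_{\M}(\T^R)$; linearity is immediate from \eqref{12 Years a Slave} and bilinearity of the tensor product, so the only content is continuity. By Theorem \ref{The Hurt Locker} it suffices to prove sequential continuity: if $\psi_n\to 0$ in $\E_{\M}(\T^R)$, then $\langle\widehat u(t,\eta),\psi_n\rangle=\tfrac{1}{(2\pi)^S}\langle u,\psi_n(t)\,e^{-ix\eta}\rangle\to 0$. Since $u$ is continuous on $\E_{\M}(\T^N)$, this reduces to showing that $\psi_n\otimes e_\eta\to 0$ in $\E_{\M}(\T^N)$, where $e_\eta(x):=e^{-ix\eta}$.

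The key observation is that $e_\eta$ is analytic on $\T^S$, hence $e_\eta\in C^{\omega}(\T^S)\subset\E_{\M}(\T^S)$; in particular there is $h_S>0$ with $e_\eta\in\E_{\M,h_S}(\T^S)$. This puts us exactly in the setting of Lemma \ref{Amour} with the roles of the two factors interchanged: there the factor on $\T^R$ was held fixed, here it is the factor on $\T^S$. Concretely, given $\psi_n\to 0$ in $\E_{\M}(\T^R)$, a property of DFS spaces provides $h_R>0$ with $\psi_n\in\E_{\M,h_R}(\T^R)$ for all $n$ and $\|\psi_n\|_{\M,h_R}\to 0$. Setting $h=\max\{h_R,h_S\}$ and using $\partial_t^{\alpha}\partial_x^{\beta}(\psi_n(t)e_\eta(x))=\partial_t^{\alpha}\psi_n(t)\cdot\partial_x^{\beta}e_\eta(x)$ together with $m_{|\alpha|}m_{|\beta|}\le m_{|\alpha|+|\beta|}$ (Proposition \ref{Social Network}, item \ref{On the Waterfront}) and $|\alpha|!\,|\beta|!\le(|\alpha|+|\beta|)!$, one obtains
$$\sup_{(t,x)\in\T^N}\frac{|\partial_t^{\alpha}\partial_x^{\beta}(\psi_n\otimes e_\eta)(t,x)|}{h^{|\alpha|+|\beta|}\,m_{|\alpha|+|\beta|}\,(|\alpha|+|\beta|)!}\le\|\psi_n\|_{\M,h_R}\,\|e_\eta\|_{\M,h_S},$$
so $\psi_n\otimes e_\eta\in\E_{\M,h}(\T^N)$ and $\|\psi_n\otimes e_\eta\|_{\M,h}\le\|\psi_n\|_{\M,h_R}\|e_\eta\|_{\M,h_S}\to 0$, whence $\psi_n\otimes e_\eta\to 0$ in $\E_{\M}(\T^N)$.

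Combining the two steps, $\langle u,\psi_n\otimes e_\eta\rangle\to 0$, hence $\langle\widehat u(t,\eta),\psi_n\rangle\to 0$, and Theorem \ref{The Hurt Locker} yields $\widehat u(t,\eta)\in\D'_{\M}(\T^R)$. I do not expect a genuine obstacle here: the argument is a mirror image of Lemma \ref{Amour}, the only new ingredient being the harmless remark that $e^{-ix\eta}$, being analytic, lies in every Denjoy--Carleman class on $\T^S$. If one prefers not to repeat the estimate, it is equally valid to invoke the symmetric version of Lemma \ref{Amour} (obtained by relabelling $R\leftrightarrow S$ and $t\leftrightarrow x$) with the fixed second factor taken to be $e_\eta$.
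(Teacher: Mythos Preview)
Your proposal is correct and follows essentially the same approach as the paper: note that $e^{-ix\eta}\in C^{\omega}(\T^S)\subset\E_{\M}(\T^S)$, reduce to showing $\psi_n\otimes e_\eta\to 0$ in $\E_{\M}(\T^N)$, and then invoke (the symmetric version of) Lemma \ref{Amour}. The paper's own proof is merely a two-line sketch of exactly this argument, whereas you have spelled out the estimate in full; there is no substantive difference.
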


\begin{proof}
Let  $\left\{\psi_{n} \right\}_{n \in \N}$ be a sequence converging to  $0$ in $\E_{\M}(\T^R)$. It is sufficient to verify that 
$$\psi_{n} \cdot e^{-i x \eta} \to 0 \ \text{in} \ \E_{\M}(\T^N), \ \text{for any} \ \eta. $$ 
Since $e^{-ix \eta} \in C^{\omega}(\T^{S})$, it also belongs to $\E_{\M}(\T^{S})$. We proceed then  as in Lemma \ref{Amour}.  
\end{proof}

\begin{Teo} \label{The Royal Tenembauns}
Let $u \in \D_{\M}(\T^N)$. Then 
	
$$u = \ds \sum_{\eta \in \Z^S} \widehat{u}(t, \eta) \cdot e^{i x \eta},$$
	
\noindent with convergence in $\D_{\M}'(\T^N)$. Moreover, for every $\varepsilon, h > 0$ there exists $C_{\varepsilon, h} > 0$ such that
\begin{equation} \label{Zero Dark Thirty}
\left|\< \widehat{u}(t, \eta), \psi \> \right| \leq C_{\varepsilon, h} \cdot \left\|\psi \right\|_{\M, h} \cdot \ds \sup_{p \in \N_{0}} \left( \ds \frac{\varepsilon^{p} \cdot (1+|\eta|)^{p}}{m_{p} \cdot p!} \right), \ \ \forall \eta \in \Z^{S},\ \ \forall \psi \in \E_{\M, h}(\T^{R}). 
\end{equation}	
\end{Teo}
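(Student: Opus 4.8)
The plan is to prove the two assertions separately: first the quantitative bound \eqref{Zero Dark Thirty}, which follows directly from the continuity estimate \eqref{Minority Report} applied to $u$; and then the convergence of the partial sums, which I will obtain by testing against an arbitrary $\varphi\in\E_{\M}(\T^N)$, using that the partial Fourier sums of $\varphi$ in the variable $x$ converge to $\varphi$ \emph{in the topology of} $\E_{\M}(\T^N)$, not merely fibrewise.

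For \eqref{Zero Dark Thirty}: fix $\varepsilon,h>0$, put $\varepsilon':=\min\{\varepsilon,1/h\}$, and let $C_{\varepsilon'}$ be the constant from \eqref{Minority Report} for the parameter $\varepsilon'$. By \eqref{12 Years a Slave} we have $\<\widehat u(t,\eta),\psi\> = (2\pi)^{-S}\<u,\psi(t)e^{-ix\eta}\>$, so it suffices to estimate $|\<u,\psi(t)e^{-ix\eta}\>|$ via \eqref{Minority Report}. Writing a multi-index on $\T^N$ as $\gamma=(\alpha,\beta)$ with $\alpha\in\N_0^R$, $\beta\in\N_0^S$, one has $\partial^\gamma(\psi(t)e^{-ix\eta}) = \partial_t^\alpha\psi(t)\cdot(-i\eta)^\beta e^{-ix\eta}$, hence for $\psi\in\E_{\M,h}(\T^R)$ the bound $|\partial^\gamma(\psi(t)e^{-ix\eta})|\le \|\psi\|_{\M,h}\,h^{|\alpha|}m_{|\alpha|}|\alpha|!\,|\eta|^{|\beta|}$. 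Dividing by $m_{|\gamma|}|\gamma|!$, multiplying by $\varepsilon'^{|\gamma|}$, and using \ref{On the Waterfront} of Proposition \ref{Social Network} together with $|\alpha|!\,|\beta|!\le(|\alpha|+|\beta|)!$ to get $\tfrac{m_{|\alpha|}|\alpha|!}{m_{|\gamma|}|\gamma|!}\le\tfrac{1}{m_{|\beta|}|\beta|!}$, the general term factors as $(h\varepsilon')^{|\alpha|}\cdot\tfrac{(\varepsilon'|\eta|)^{|\beta|}}{m_{|\beta|}|\beta|!}$; since $h\varepsilon'\le 1$ and $\varepsilon'\le\varepsilon$ this is at most $\sup_{p\in\N_0}\tfrac{\varepsilon^p(1+|\eta|)^p}{m_p p!}$. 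Taking the supremum over $\gamma$ and $(t,x)$ in \eqref{Minority Report} and dividing by $(2\pi)^S$ gives \eqref{Zero Dark Thirty} with $C_{\varepsilon,h}=C_{\varepsilon'}/(2\pi)^S$. In particular, for each fixed $\eta$ the formula $\<\widehat u(t,\eta)\cdot e^{ix\eta},\varphi\>:=(2\pi)^S\<\widehat u(t,\eta),\widehat\varphi(\cdot,-\eta)\>$ defines an element of $\D_{\M}'(\T^N)$, because $\varphi\mapsto\widehat\varphi(\cdot,-\eta)$ is continuous from $\E_{\M}(\T^N)$ into $\E_{\M}(\T^R)$ by Theorem \ref{Children of Men}.

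For the convergence, fix $\varphi\in\E_{\M}(\T^N)$ and set $P_k\varphi(t,x):=\sum_{|\eta|\le k}\widehat\varphi(t,\eta)e^{ix\eta}$. By Theorem \ref{Children of Men} the coefficient functions $\widehat\varphi(\cdot,\eta)\in\E_{\M}(\T^R)$ satisfy exactly the hypotheses of Theorem \ref{Brooklyn}; repeating the estimate from the proof of Theorem \ref{Brooklyn} with the tail $\sum_{|\eta|\ge k+1}(1+|\eta|)^{-2N}$ in place of the full series shows that $\|\varphi-P_k\varphi\|_{\M,h'}\to 0$ for a suitable $h'$, and since $P_k\varphi\to\varphi$ pointwise (Fourier inversion in the variable $x$ for each fixed $t$) the limit is indeed $\varphi$, i.e.\ $P_k\varphi\to\varphi$ in $\E_{\M}(\T^N)$. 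Continuity of $u$ then gives $\<u,\varphi\>=\lim_k\<u,P_k\varphi\>=\lim_k\sum_{|\eta|\le k}\<u,\widehat\varphi(\cdot,\eta)e^{ix\eta}\>$. Applying \eqref{12 Years a Slave} with $-\eta$ in place of $\eta$ and $\psi=\widehat\varphi(\cdot,\eta)$ yields $\<u,\widehat\varphi(\cdot,\eta)e^{ix\eta}\>=(2\pi)^S\<\widehat u(t,-\eta),\widehat\varphi(\cdot,\eta)\>$, and re-indexing $\eta\mapsto-\eta$ over the symmetric ball $\{|\eta|\le k\}$ rewrites the partial sum as $\sum_{|\eta|\le k}\<\widehat u(t,\eta)\cdot e^{ix\eta},\varphi\>$. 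Letting $k\to\infty$ establishes $u=\sum_{\eta\in\Z^S}\widehat u(t,\eta)\cdot e^{ix\eta}$ with convergence in $\D_{\M}'(\T^N)$.

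I expect the convergence step to be the main obstacle, for three interlocking reasons: one must fix the precise meaning of the ultradistribution $\widehat u(t,\eta)\cdot e^{ix\eta}$ on $\T^N$ and check that it is a continuous functional (this is where Theorem \ref{Children of Men} is really used); one must upgrade the fibrewise partial-Fourier expansion of $\varphi$ to convergence in the inductive-limit topology of $\E_{\M}(\T^N)$, which forces one to re-run the estimates behind Theorem \ref{Brooklyn}; and one must keep careful track of the sign flip $\eta\mapsto-\eta$ that appears because $u$ is paired against $\psi(t)e^{-ix\eta}$ while the series reconstructs $u$ from the exponentials $e^{ix\eta}$. By contrast, the estimate \eqref{Zero Dark Thirty} is routine once the submultiplicativity bound \ref{On the Waterfront} of Proposition \ref{Social Network} is brought in.
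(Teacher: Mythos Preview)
Your proof is correct and follows essentially the same approach as the paper's. For the estimate \eqref{Zero Dark Thirty} you apply \eqref{Minority Report} to $\psi(t)e^{-ix\eta}$ and use the submultiplicativity $m_{|\alpha|}m_{|\beta|}|\alpha|!|\beta|!\le m_{|\alpha+\beta|}(|\alpha+\beta|)!$ exactly as the paper does; your device of setting $\varepsilon'=\min\{\varepsilon,1/h\}$ up front is just a cleaner way of packaging the paper's two-case split ($\varepsilon\le 1/h$ versus $\varepsilon>1/h$). For the convergence you test against $\varphi$, expand it in partial Fourier series in $x$, and pass the limit through $u$ by continuity---again the paper's argument. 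You are in fact more careful than the paper on two points: you explicitly justify that the partial sums $P_k\varphi$ converge to $\varphi$ in $\E_{\M}(\T^N)$ (the paper asserts this without proof, having only established fibrewise convergence in $\E_{\M}(\T^S)$), and you track the sign change $\eta\mapsto-\eta$ that the paper's middle equality glosses over.
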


\begin{proof}
Let $\lambda (t,x) \in \E_{\M}(\T^N)$; since $\lambda (t,x) = \ds \sum_{\eta \in \Z^{S}} \widehat{\lambda}(t, \eta) \cdot e^{i \eta x}$, with convergence in $\E_{\M}(\T^N)$, we have
\begin{align*}
\<u, \lambda \> &= \ds \lim_{k \to + \infty} \ds \sum_{|\eta| \leq k} \<u, \widehat{\lambda}(t, \eta) \cdot e^{i  x \eta} \> \\
&= \lim_{k \to + \infty} \ds \sum_{|\eta| \leq k}  \<\widehat{u}(t, \eta), \ds \int_{\T^{S}} \lambda (t, x) \cdot e^{ix \eta} dx \> \\
&= \lim_{k \to + \infty} \ds \sum_{|\eta| \leq k}  \<\widehat{u}(t, \eta) \cdot e^{ix \eta}, \lambda \>. 
\end{align*}

Let us proceed to the proof of \eqref{Zero Dark Thirty}. Given $\varepsilon, h > 0$;  it follows from Theorem \ref{The Hurt Locker}, the existence of  $C_{\varepsilon}$ such that
\begin{align*}
\left| \< \widehat{u}(t, \eta), \psi \> \right| &\leq \ds \frac{C_{\varepsilon}}{(2 \pi)^{S}} \cdot  \underset{(\alpha, \beta) \in \N_{0}^{N}} {\ds \sup_{(t,x) \in \T^N}} \left( \ds \frac{\left|\partial_{t}^{\alpha} \partial_{x}^{\beta} \left(\psi(t) \cdot e^{-i x \eta} \right)\right| \cdot \varepsilon^{|\alpha + \beta|}}{m_{|\alpha + \beta|} \cdot (|\alpha + \beta|) !} \right) \\
& \leq \ds \frac{C_{\varepsilon}}{(2 \pi)^{S}} \cdot \left\|\psi \right\|_{\M, h} \cdot \underset{(\alpha, \beta) \in \N_{0}^{N}} {\ds \sup_{(t,x) \in \T^N}} \left( \ds \frac{h^{|\alpha|} \cdot (1+|\eta|)^{|\beta|} \cdot \varepsilon^{|\alpha|} \cdot \varepsilon^{|\beta|}} { m_{|\beta|} \cdot |\beta|!} \right).
\end{align*}

When $\varepsilon \leq \ds \frac{1}{h}$,
\begin{align*}
\left| \< \widehat{u}(t, \eta), \psi \> \right| &\leq \ds \frac{C_{\varepsilon}}{(2 \pi)^{S}} \cdot \left\|\psi \right\|_{\M, h} \cdot  \ds \sup_{\beta \in \N_{0}^{S}} \left( \ds \frac{\varepsilon^{|\beta|} \cdot (1+|\eta|)^{|\beta|} } { m_{|\beta|} \cdot |\beta|!} \right) \\
&\leq \ds \frac{C_{\varepsilon}}{(2 \pi)^{S}} \cdot \left\|\psi \right\|_{\M, h}  \cdot  \ds \sup_{p \in \N_{0}}\left( \ds \frac{\varepsilon^{p} \cdot (1+|\eta|)^{p}}{m_{p} \cdot p!} \right).
\end{align*}
Otherwise,
$$\left| \< \widehat{u}(t, \eta), \psi \> \right| \leq \ds \frac{C_{1/h}}{(2 \pi)^{S}} \cdot \left\|\psi \right\|_{\M, h}  \cdot  \ds \sup_{p \in \N_{0}} \left( \ds \frac{ (1+|\eta|)^{p}}{h^{p} \cdot m_{p} \cdot p!} \right) \leq \ds \frac{C_{1/h}}{(2 \pi)^{S}} \cdot \left\|\psi \right\|_{\M, h}   \cdot \ds \sup_{p \in \N_{0}} \left( \ds \frac{\varepsilon^{p} \cdot (1+|\eta|)^{p}}{ m_{p} \cdot p!} \right).$$
	
\noindent Therefore, we choose $C_{h, \varepsilon} = \ds \frac{C_{\varepsilon}}{(2 \pi)^{S}}$ for the first case and $C_{h, \varepsilon} = \ds \frac{C_{1/h}}{(2 \pi)^{S}}$ for the second one. 
\end{proof}

\begin{Teo} \label{A Clockwork Orange}
Consider $\left\{u_{\eta} \right\}_{\eta \in \Z^S}$ a sequence in $\D_{\M}'(\T^R)$, satisfying the following condition:  for every $\varepsilon, h > 0$, we can find $C_{\varepsilon, h} > 0$ such that 
	
$$\left|\< u_{\eta}, \psi \> \right| \leq C_{\varepsilon, h} \cdot \left\|\psi \right\|_{\M, h} \cdot \left(\ds \sup_{p \in \N_{0}} \ds \frac{\varepsilon^{p} \cdot (1+|\eta|)^{p}}{m_{p} \cdot p!} \right), \ \ \forall \eta \in \Z^{S},\ \ \forall \psi \in \E_{\M, h}(\T^{R}).$$ 
	
\noindent Then,  $u:= \ds \sum_{\eta \in \Z^S} u_{\eta} \cdot e^{i x \eta}$ is an element of  $\D_{\M}'(\T^N)$. 
\end{Teo}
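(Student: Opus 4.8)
The statement is the converse to Theorem \ref{The Royal Tenembauns}, so it parallels the strategy of Theorem \ref{Memento}: I want to show that the partial sums of the series pair with each test function to give a Cauchy sequence in $\C$, then invoke a continuity criterion to produce the limit functional. Concretely, set $s_j := \ds \sum_{|\eta| \leq j} u_{\eta} \cdot e^{ix\eta}$, which makes sense as an element of $\D_{\M}'(\T^N)$ because each summand $u_{\eta} \cdot e^{ix\eta}$ is the (obviously continuous) functional $\lambda \mapsto \<u_{\eta}, \ds \int_{\T^S} \lambda(t,x) e^{ix\eta}\, dx\>$, using that for fixed $\eta$ the map $\lambda \mapsto \widehat{\lambda}(t,\eta)$ sends $\E_{\M}(\T^N)$ continuously into $\E_{\M}(\T^R)$ — an estimate already extracted in the course of Theorem \ref{Children of Men}. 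Then I fix $\lambda \in \E_{\M}(\T^N)$ and estimate $\<s_m - s_k, \lambda\> = \ds \sum_{k+1 \leq |\eta| \leq m} \<u_{\eta}, \widehat{\lambda}(\cdot, \eta)\>$.

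For the tail estimate I would apply the hypothesis with $\psi = \widehat{\lambda}(\cdot,\eta)$ and $h$ taken to be some $h_1$ with $\lambda \in \E_{\M, h_1}(\T^N)$. By Theorem \ref{Children of Men} there are $C, h, \delta > 0$ (with $h$ controlling the $t$-derivatives, hence $\left\|\widehat{\lambda}(\cdot,\eta)\right\|_{\M, h} \leq C \cdot \ds \inf_{p} \frac{m_p\, p!}{\delta^p (1+|\eta|)^p}$ for the chosen $h$). Feeding this into the hypothesis gives
$$
\left|\<u_{\eta}, \widehat{\lambda}(\cdot,\eta)\>\right| \leq C_{\varepsilon, h} \cdot C \cdot \left(\ds \sup_{p \in \N_{0}} \ds \frac{\varepsilon^{p}(1+|\eta|)^{p}}{m_p\, p!}\right) \cdot \left(\ds \inf_{p \in \N_{0}} \ds \frac{m_p\, p!}{\delta^{p}(1+|\eta|)^{p}}\right),
$$
and now the product sup$\,\times\,$inf is handled exactly as in Theorem \ref{Memento}: bound the infimum by its value at $n_0 + 2N$, where $n_0$ realizes the supremum, use \eqref{Arrival} and \eqref{The Seventh Seal} to separate out the $m_{n_0}\, n_0!$ factors, and then choose $\varepsilon = \delta / (B_{\{2N\}} C_{\{2N\}})$ so the $n_0$-dependence cancels, leaving a constant times $(1+|\eta|)^{-2N}$. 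Summing over $k+1 \leq |\eta| \leq m$ and using that $\sum_{\eta \in \Z^S}(1+|\eta|)^{-2N}$ converges shows $\<s_j, \lambda\>$ is Cauchy.

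Finally, having a candidate linear functional $u$ with $\<u,\lambda\> = \ds \lim_j \<s_j, \lambda\>$, I must check $u \in \D_{\M}'(\T^N)$. The cleanest route is Lemma \ref{Magnolia}: I have a sequence $\{s_j\}$ in $\D_{\M}'(\T^N)$ for which $\<s_j, \lambda\>$ converges for every $\lambda$, so the lemma directly yields that the pointwise limit is an element of $\D_{\M}'(\T^N)$. That completes the argument. The main obstacle — and it is the same technical heart as in Theorem \ref{Memento} — is the sup-times-inf manipulation: making sure that the single cancelling choice of $\varepsilon$ works uniformly in $\eta$ (the point being that $n_0$ depends on both $\eta$ and $\varepsilon$, yet the final bound does not), and correctly tracking which $h$ from Theorem \ref{Children of Men} must be plugged into the hypothesis. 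Everything else is bookkeeping of constants of the type already carried out repeatedly in Sections \ref{Return of the Jedi} and \ref{The Phantom Menace}.
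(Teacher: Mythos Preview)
Your proposal is correct and follows the same overall architecture as the paper: define partial sums $s_j$, show $\<s_j,\lambda\>$ is Cauchy for each $\lambda$ by combining the hypothesis with the decay estimate of Theorem \ref{Children of Men} on $\widehat{\lambda}(\cdot,-\eta)$, then invoke Lemma \ref{Magnolia}. The one substantive difference is in how the product $\sup \times \inf$ is collapsed. You propose to do it ``exactly as in Theorem \ref{Memento}'': evaluate the infimum at $n_0 + 2N$ and use \eqref{Arrival} and \eqref{The Seventh Seal} with the choice $\varepsilon = \delta/(B_{\{2N\}} C_{\{2N\}})$. The paper instead applies Lemma \ref{Jaws} with $\kappa = 1/(H\delta)$, which turns $\sup \times \inf$ into $\inf^{1/2}$ in one stroke, and then evaluates at $p = 4N$. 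Both routes produce the required $(1+|\eta|)^{-2N}$ decay; yours reuses only the weaker input \eqref{Arrival} rather than the full strength of \eqref{Prisoners} via Lemma \ref{Jaws} (cf.\ the remark closing Section \ref{Return of the Jedi}), while the paper's route is a line shorter. Two cosmetic points: the pairing involves $\widehat{\lambda}(\cdot,-\eta)$ rather than $\widehat{\lambda}(\cdot,\eta)$, and the $h$ to feed into the hypothesis is the specific $h = 2rH$ produced in the proof of Theorem \ref{Children of Men} from $\lambda \in \E_{\M,r}$ --- you already flagged the latter issue yourself.
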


\begin{proof}
Given $\lambda (t,x) \in \E_{\M}(\T^N)$, there exists $r > 0$ such that $\lambda \in \E_{\M, r}(\T^N)$. We define
$$s_{j} = \ds \sum_{|\eta| \leq j} u_{\eta} \cdot e^{i x \eta}. $$
Note that $\<s_{j}, \lambda \> = \ds \sum_{|\eta| \leq j} \<u_{\eta} \cdot e^{i x \eta}, \lambda \> = \ds \sum_{|\eta| \leq j} \< u_{\eta}, \ds \int_{\T^{S}} \lambda(t,x) \cdot e^{i x \eta} dx \>$. Hence, if $k \in \N$, it follows from Theorem \ref{Children of Men} that
\begin{align*}
\left|\<s_{k+j} - s_{j}, \lambda \> \right| &\leq (2\pi)^{S} \cdot \ds \sum_{j < |\eta| \leq j+k} \left|\<u_{\eta}, \widehat{\lambda}(t,-\eta) \> \right| \\
&\leq (2\pi)^{S} \cdot \ds \sum_{j < |\eta| \leq j+k} C_{\kappa, 2rH} \cdot \left\|\widehat{\lambda}( \cdot, -\eta) \right\|_{\M, 2rH} \cdot \left(\ds \sup_{p \in \N_{0}} \ds \frac{\kappa^{p} \cdot (1+|\eta|)^{p}}{m_{p} \cdot p!} \right) \\
&\leq  \ds \sum_{j < |\eta| \leq j+k} C_{\kappa, \delta, 2rH}'  \cdot \left(\ds \inf_{p \in \N_{0}} \ds \frac{m_{p} \cdot p!}{\delta^{p} \cdot (1+|\eta|)^{p}} \right)  \cdot \left(\ds \sup_{p \in \N_{0}} \ds \frac{\kappa^{p} \cdot (1+|\eta|)^{p}}{m_{p} \cdot p!} \right),
\end{align*}
with $\kappa > 0$ which has yet to be defined. 
	
By taking $\kappa = \ds \frac{1}{H \delta}$, with $H$ as in \eqref{Prisoners}, it follows from Lemma \ref{Jaws} that 
\begin{align*}
\left|\<s_{k+j} - s_{j}, \lambda \> \right| & \leq  \ds \sum_{j < |\eta| \leq j+k} C_{\delta, \kappa, 2rH}'  \cdot \left(\ds \inf_{p \in \N_{0}} \ds \frac{m_{p} \cdot p!}{\delta^{p} \cdot (1+|\eta|)^{p}} \right)^{1/2} \\
& \leq \ds \sum_{j < |\eta| \leq j+k} C_{\delta, \kappa,  2rH}'  \cdot \left(\ds \frac{(m_{4N} \cdot 4N!)^{1/2}}{\delta^{2N} \cdot (1+|\eta|)^{2N}} \right) \\
&\leq C_{\delta, \kappa,  2rH}''  \cdot \ds \sum_{j < |\eta| \leq j+k} \ds \frac{1}{(1+|\eta|)^{2N}}.
\end{align*}

Hence $\ds \lim_{j \to \infty} \left|\<s_{k+j} - s_{j}, \lambda \> \right| = 0$, which shows us that $\< s_{j}, \lambda \>$ is a Cauchy sequence and therefore (Lemma \ref{Magnolia}) $u \in \D_{\M}(\T^N)$. 
	
\end{proof}

\section{Applications} \label{Revenge of the Sith} 

\subsection{Systems of Constant Coefficient Operators}

Just like in the Analytic (\cite{greenfield1972hypoelliptic}) and Gevrey (\cite{himonas2004gevrey}) cases, we will prove that global hypoellipticity for systems of constant coefficient operators are directly related to the weight function associated. Let $P_{1}(D), P_{2}(D), \cdots P_{k}(D)$ be constant coefficients linear partial differential operators acting on $\T^N$ and consider the system
\begin{equation} \label{Gandhi} 
P_{j} u = f_{j}, \ \ \  u \in \D_{\M}'(\T^N), \ \ f_{j} \in \D_{\M}'(\T^N), \ \ j = 1, 2, \ldots, k. 
\end{equation}

\noindent We define its \textbf{symbol} as $P(\xi) := \left(P_{1}(\xi), P_{2}(\xi), \ldots, P_{k}(\xi) \right)$ for each $\xi \in \Z^N$ and $|P(\xi)| := \ds \max_{1 \leq j \leq k} |P_{j}(\xi)|. $

\begin{Def}
We say that \eqref{Gandhi} is  \textbf{globally $\M$-hypoelliptic} if
$$u \in \D_{\M}'(\T^N), \ f_{j} \in \E_{\M}(\T^N), \ P_{j}(D) u = f_{j}, \ j= 1, 2, \ldots, k \ \Rightarrow u \in \E_{\M}(\T^N).$$
\end{Def}

\begin{Teo} \label{Spider Man II}
The system \eqref{Gandhi} is globally $\M$-hypoelliptic if and only if for every $\varepsilon > 0$, there exists $R_{\varepsilon} > 0$ such that 

\begin{equation} \label{Harry Potter and the Philosopher's Stone}
|P(\xi)| \geq  \ds \inf_{n \in \N_{0}} \left(\ds \frac{m_{n} \cdot n!} {\varepsilon^{n} \cdot (1+|\xi|)^{n}}\right), \ \ \ \forall \xi \in \Z^N; \ |\xi| \geq R_{\varepsilon},
\end{equation} 
\end{Teo}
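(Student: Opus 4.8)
The plan is to prove both implications by working coefficient-by-coefficient via the Fourier-coefficient characterizations established in Sections \ref{Return of the Jedi}. Throughout, write $\widehat{u}(\xi)$ for the Fourier coefficients of $u$ and note that the system $P_j(D)u = f_j$ is equivalent, after taking Fourier coefficients, to $P_j(\xi)\widehat{u}(\xi) = \widehat{f_j}(\xi)$ for all $\xi \in \Z^N$, hence to $|P(\xi)|\cdot|\widehat{u}(\xi)| \geq |\widehat{f_j}(\xi)|$ and, choosing $j$ realizing the max, to a single scalar relation comparing $|P(\xi)|$, $|\widehat{u}(\xi)|$ and $\max_j|\widehat{f_j}(\xi)|$. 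I will also use Remark \ref{Apollo 13} freely to pass between $\inf_n (m_n n!/t^n)$ and $[\sup_n(t^n/(m_n n!))]^{-1}$.

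For the sufficiency direction, suppose \eqref{Harry Potter and the Philosopher's Stone} holds. Take $u \in \D_{\M}'(\T^N)$ with $P_j(D)u = f_j \in \E_{\M}(\T^N)$. By Corollary \ref{Forrest Gump} it suffices to show $\widehat{u}(\xi)$ decays like $C\inf_n (m_n n!/(\delta^n(1+|\xi|)^n))$ for suitable $C,\delta$. For $|\xi| \geq R_\varepsilon$ we divide the relation $P_j(\xi)\widehat{u}(\xi) = \widehat{f_j}(\xi)$ by $P(\xi)$ (nonzero for large $|\xi|$ by the hypothesis) and use \eqref{Harry Potter and the Philosopher's Stone} to get $|\widehat{u}(\xi)| \leq |P(\xi)|^{-1}\max_j|\widehat{f_j}(\xi)| \leq \bigl[\inf_n(m_n n!/(\varepsilon^n(1+|\xi|)^n))\bigr]^{-1}\cdot\max_j|\widehat{f_j}(\xi)|$. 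Since $f_j \in \E_{\M}(\T^N)$, Theorem \ref{Kramer vs Kramer} gives $|\widehat{f_j}(\xi)| \leq C_j\inf_n(m_n n!/(\delta_j^n(1+|\xi|)^n))$; multiplying the two bounds and using Remark \ref{Apollo 13} to rewrite the reciprocal inf as a sup, the product telescopes — concretely one uses that $\sup_n(t^n/(m_n n!))\cdot\inf_n(m_n n!/(s^n)) \lesssim$ a geometric factor in $t/s$ times a fixed power of $(1+|\xi|)^{-1}$, exactly the kind of manipulation carried out in the proofs of Theorems \ref{Memento} and \ref{The Aviator}. Choosing $\varepsilon$ small enough relative to $\delta_j$ (say $\varepsilon = \delta_j/2$) makes the geometric factor bounded, leaving a decay estimate of the required form for $|\xi| \geq R_\varepsilon$; the finitely many remaining $\xi$ are absorbed into the constant. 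Hence $u \in \E_{\M}(\T^N)$ by Corollary \ref{Forrest Gump}.

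For the necessity direction, I argue by contrapositive: assume \eqref{Harry Potter and the Philosopher's Stone} fails, so there is $\varepsilon_0 > 0$ and a sequence $\xi_\nu \in \Z^N$ with $|\xi_\nu| \to \infty$ and $|P(\xi_\nu)| < \inf_n(m_n n!/(\varepsilon_0^n(1+|\xi_\nu|)^n))$. I must build $u \in \D_{\M}'(\T^N)$ and $f_j \in \E_{\M}(\T^N)$ with $P_j(D)u = f_j$ but $u \notin \E_{\M}(\T^N)$. The idea is to put all mass on the $\xi_\nu$. If some $P_j(\xi_\nu) = 0$ for infinitely many $\nu$ then already $P(\xi)$ vanishes along a sequence — take $\widehat{u}(\xi_\nu) = 1$ (and $0$ elsewhere) for those $\nu$ in the kernel of all $P_j$, or more carefully handle the general case; the clean construction is to set $\widehat{u}(\xi_\nu) = |P(\xi_\nu)|^{-1}$ (defined as, say, $(1+|\xi_\nu|)^{|\xi_\nu|}$ when $P(\xi_\nu)=0$ to make $u \notin \E_{\M}$ trivially) and $\widehat{u}(\xi) = 0$ otherwise, and then $\widehat{f_j}(\xi_\nu) = P_j(\xi_\nu)/|P(\xi_\nu)|$, which has modulus $\leq 1$, so $f_j$ is a trigonometric series with bounded coefficients — in fact $|\widehat{f_j}(\xi)| \leq 1 \leq C\inf_n(m_n n!/(\delta^n(1+|\xi|)^n))\cdot(1+|\xi|)^{?}$... this needs care, so instead one multiplies through by a scalar decaying sequence $c_\nu$ so that $\widehat{u}(\xi_\nu) = c_\nu/|P(\xi_\nu)|$ with $c_\nu$ chosen (e.g. $c_\nu = \inf_n(m_n n!/((\delta/2)^n(1+|\xi_\nu|)^n))^{1/2}$) small enough that $\widehat{f_j}(\xi) := c_\nu P_j(\xi_\nu)/|P(\xi_\nu)|$ satisfies the hypothesis of Theorem \ref{The Aviator}, hence $f_j \in \E_{\M}(\T^N)$, while $\widehat{u}(\xi_\nu) = c_\nu/|P(\xi_\nu)| > c_\nu\sup_n(\varepsilon_0^n(1+|\xi_\nu|)^n/(m_n n!))$ grows too fast (using the failure inequality together with Lemma \ref{Jaws} to control the loss from the square root in $c_\nu$) to satisfy the bound in Corollary \ref{Forrest Gump} / Theorem \ref{Kramer vs Kramer}, so $u \notin \E_{\M}(\T^N)$. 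One must also check $u \in \D_{\M}'(\T^N)$, which follows from Theorem \ref{Memento} since $|\widehat{u}(\xi)| = c_\nu/|P(\xi_\nu)|$ is dominated by $\sup_n(\varepsilon^n(1+|\xi|)^n/(m_n n!))$ for every $\varepsilon$ (because $c_\nu$ beats any fixed power while $|P(\xi_\nu)|^{-1}$ is at worst polynomially... no — this is precisely the delicate point).

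The main obstacle is the bookkeeping in the necessity direction: one needs a single scalar sequence $c_\nu$ that is simultaneously small enough for $\{c_\nu P_j(\xi_\nu)/|P(\xi_\nu)|\}$ to meet the Gevrey-type decay required by Theorem \ref{The Aviator} (so the $f_j$ are genuine ultradifferentiable functions), small enough for $u$ to lie in $\D_{\M}'$ via Theorem \ref{Memento}, yet large enough — when combined with the failure of \eqref{Harry Potter and the Philosopher's Stone} — that $\widehat{u}(\xi_\nu)$ violates the decay characterization of $\E_{\M}(\T^N)$. Lemma \ref{Jaws} (the $H$-stability of the associated function) is the tool that makes these three constraints compatible: taking a square root of the extremal sequence loses only a controlled factor, exactly as in the proof of Theorem \ref{A Clockwork Orange}. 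The sufficiency direction is routine once the coefficient relation is written down; all the content is in choosing constants and invoking Remark \ref{Apollo 13} to telescope the $\sup$–$\inf$ product.
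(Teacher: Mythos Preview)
Your sufficiency argument is essentially the paper's, but the ``telescoping'' step is not a generic geometric bound in $t/s$: for general weight sequences the product $\sup_n\bigl(t^n/(m_n n!)\bigr)\cdot\inf_n\bigl(m_n n!/s^n\bigr)$ need not be controlled just by $t/s$. What actually works is precisely Lemma~\ref{Jaws}: with $\varepsilon=\delta/H$ one has
\[
\Bigl[\sup_{n}\tfrac{(\delta/H)^n(1+|\xi|)^n}{m_n n!}\Bigr]^2\le \sup_{n}\tfrac{\delta^n(1+|\xi|)^n}{m_n n!},
\]
which, after inverting via Remark~\ref{Apollo 13}, gives
\[
\inf_{n}\tfrac{m_n n!}{\delta^n(1+|\xi|)^n}\cdot \sup_{n}\tfrac{(\delta/H)^n(1+|\xi|)^n}{m_n n!}\le \inf_{n}\tfrac{m_n n!}{(\delta/H)^n(1+|\xi|)^n}.
\]
That is the paper's one-line computation; your ``$\varepsilon=\delta_j/2$'' does not do the job without this lemma.

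For the necessity direction you are working much too hard, and the ``delicate point'' you flag is an artifact of your construction, not of the problem. The paper's trick is simply to take $\widehat{u}(\xi_m)=1$ on the bad sequence and $0$ elsewhere, i.e.\ $u=\sum_m e^{ix\cdot\xi_m}$. Then: (i) $u\in\D_{\M}'(\T^N)$ trivially by Theorem~\ref{Memento}, since bounded coefficients satisfy every $\varepsilon$-bound; (ii) $u\notin \E_{\M}(\T^N)$ because it is not even smooth (its Fourier coefficients do not tend to $0$); (iii) $f_j=P_j(D)u$ has $\widehat{f_j}(\xi_m)=P_j(\xi_m)$, and $|P_j(\xi_m)|\le |P(\xi_m)|<\inf_n\bigl(m_n n!/(\varepsilon_0^n(1+|\xi_m|)^n)\bigr)$ directly from the failure of \eqref{Harry Potter and the Philosopher's Stone}, so Theorem~\ref{The Aviator} gives $f_j\in\E_{\M}(\T^N)$. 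No division by $|P(\xi_\nu)|$, no auxiliary $c_\nu$, no case analysis for $P(\xi_\nu)=0$, and no appeal to Lemma~\ref{Jaws} are needed here.
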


\begin{proof}
We start by proving the sufficiency: let $u \in \D_{\M}'(\T^N)$ such that  $P_{j}(D)u = f_{j} \in \E_{\M}(\T^N)$, for $j = 1, 2, \ldots, k$. It follows from Theorem \ref{Kramer vs Kramer} the existence of $C, \delta > 0$ satisfying 
$$|\widehat{f}_{j}(\xi)| \leq C \cdot \ds \inf_{n \in \N_{0}} \left( \ds \frac{m_{n} \cdot n!}{\delta^{n} \cdot (1+|\xi|)^{n}} \right), \ \ \forall \xi \in \Z^{N}, \ \ j = 1, 2, \ldots, k.$$ 
We write $u = \ds \sum_{\xi \in \Z^N} \widehat{u}(\xi) \cdot e^{i x \xi}$ and take $\varepsilon = \ds \frac{\delta}{H}$. Given $\xi \in \Z$ with $ |\xi| \geq R_{\delta/H}$, 
\begin{align*}
|\widehat{u}(\xi)| &\leq C \left(\ds \inf_{n \in \N_{0}} \ds \frac{m_{n} \cdot n!}{\delta^{n} \cdot (1+|\xi|)^{n}} \right) \cdot \left[\ds \sup_{n \in \N_{0}} \ds \frac{(\delta)/H)^{n} \cdot (1+|\xi|)^{n}}{m_{n} \cdot n!}  \right] \\
&\leq C \cdot \left(\ds \inf_{n \in \N_{0}} \ds \frac{m_{n} \cdot n!}{\varepsilon^{n} \cdot (1+|\xi|)^{n}} \right),
\end{align*}
by Lemma \ref{Jaws}. Hence, by possibly increasing C, we obtain 
$$|\widehat{u}(\xi)| \leq C \cdot \ds \inf_{n \in \N_{0}} \left( \ds \frac{m_{n} \cdot n!}{\varepsilon^{n} \cdot (1+|\xi|)^{n}} \right), \ \forall \xi \in \Z^N,$$

\noindent which allows us to infer (Corollary \ref{Forrest Gump}) that $u \in \E_{\M}(\T^N)$. 

Let us proceed to the necessity. If \eqref{Harry Potter and the Philosopher's Stone} does not hold, there exist $\varepsilon > 0$ and a sequence $\left\{\xi_{m} \right\}_{m \in \N}$ in $\Z^N$ satisfying the following properties:
$$|P(\xi_{m})| < \ds \inf_{n \in \N_{0}} \left(\ds \frac{m_{n} \cdot n!}  {\varepsilon^{n} \cdot (1+|\xi|)^{n}} \right), \ |\xi_{m}| \geq m, \ \ \  \forall m \in \N.$$
Then $u = \ds \sum_{m \in \N} e^{i x \cdot \xi_{m} }$ is a ultradistribution (Theorem \ref{Memento}) but not a smooth function.

On the other hand, $f_{j} := P_{j}(D)u = \sum_{m \in \N} P_{j}(\xi_{m}) \cdot e^{i x \cdot \xi_{m}}$ and 
$$|P_{j}(\xi_{m})| \leq |P(\xi_{m})| < \ds \inf_{n \in \N_{0}} \left(\ds \frac{m_{n} \cdot n!}  {\varepsilon^{n} \cdot (1+|\xi|)^{n}} \right), \ \forall m \in \N, \ \  j = 1, 2, \ldots, k. $$

\noindent It follows from Theorem \ref{The Aviator} that  $f_{j} \in \E_{\M}(\T^N)$  for $j \in \left\{1, 2, \ldots, k \right\}$, as we intended to prove.  

\end{proof} 

\begin{Cor} \label{Roma}
Let $P(D)$ be a system of constant coefficients linear partial differential operators that is globally $C^{\infty}$-hypoelliptic. Then it is globally $\M$-hypoelliptic.  
\end{Cor}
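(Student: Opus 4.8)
The plan is to reduce the statement to the Fourier-coefficient criterion of Theorem \ref{Spider Man II}, and to invoke the classical characterization of global $C^\infty$-hypoellipticity (Greenfield--Wallach) for systems of constant-coefficient operators: the system $P(D)$ is globally $C^\infty$-hypoelliptic if and only if there exist constants $C, M > 0$ and $R > 0$ such that $|P(\xi)| \geq C \cdot |\xi|^{-M}$ for all $\xi \in \Z^N$ with $|\xi| \geq R$. I will take this polynomial lower bound as the known input. The goal is then purely arithmetic: show that a polynomial lower bound on $|P(\xi)|$ forces the weaker (weight-sequence) lower bound \eqref{Harry Potter and the Philosopher's Stone} for every $\varepsilon > 0$.

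First I would fix $\varepsilon > 0$ and examine the quantity $\ds \inf_{n \in \N_0} \bigl( m_n \cdot n! / (\varepsilon^n (1+|\xi|)^n)\bigr)$. By Remark \ref{Apollo 13}, for each fixed $\xi$ this infimum is attained at some $n_0 = n_0(\xi) \in \N_0$, and it equals $\bigl[\sup_{n} \varepsilon^n (1+|\xi|)^n / (m_n n!)\bigr]^{-1}$. The key observation is that, for fixed $\varepsilon$, this infimum decays \emph{faster} than any negative power of $(1+|\xi|)$: indeed, taking the single term $n = M'$ (where $M' \in \N$ is chosen with $M' \geq M$) in the supremum gives $\sup_n \varepsilon^n(1+|\xi|)^n/(m_n n!) \geq \varepsilon^{M'}(1+|\xi|)^{M'}/(m_{M'} M'!)$, hence
\begin{equation*}
\ds \inf_{n \in \N_0} \ds \frac{m_n \cdot n!}{\varepsilon^n (1+|\xi|)^n} \leq \frac{m_{M'} \cdot M'!}{\varepsilon^{M'}} \cdot \frac{1}{(1+|\xi|)^{M'}} \leq \frac{m_{M'} \cdot M'!}{\varepsilon^{M'}} \cdot \frac{1}{|\xi|^{M}}
\end{equation*}
for $|\xi| \geq 1$. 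Combining this with the polynomial lower bound $|P(\xi)| \geq C |\xi|^{-M}$, it suffices to take $R_\varepsilon$ large enough that $C |\xi|^{-M} \geq (m_{M'} M'! / \varepsilon^{M'}) |\xi|^{-M'}$ for all $|\xi| \geq R_\varepsilon$ — which is possible precisely because $M' \geq M$, so the right-hand side decays at least as fast (in fact $M'$ can be taken strictly larger than $M$, making the inequality eventually trivial; if $M' = M$ one shrinks to handle the constant). This yields \eqref{Harry Potter and the Philosopher's Stone}, and Theorem \ref{Spider Man II} then gives global $\M$-hypoellipticity.

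The main (and essentially only) obstacle is making precise the classical fact that global $C^\infty$-hypoellipticity of a constant-coefficient system on the torus is equivalent to a polynomial lower bound on the symbol away from the origin; this is standard (it is the analogue for systems of the Greenfield--Wallach criterion, and uses the Schwartz-kernel/Fourier-coefficient characterization of $C^\infty$ on $\T^N$), and I would simply cite it, e.g. from \cite{greenfield1972hypoelliptic}. Everything after that is the elementary estimate above, exploiting that for fixed $\varepsilon$ the infimum defining the right-hand side of \eqref{Harry Potter and the Philosopher's Stone} is $o(|\xi|^{-M})$ for every $M$, so any fixed polynomial growth of $|P(\xi)|^{-1}$ is eventually dominated.
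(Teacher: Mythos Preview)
Your proposal is correct and follows essentially the same route as the paper's proof: both invoke the Greenfield--Wallach polynomial lower bound $|P(\xi)|\geq L(1+|\xi|)^{-k}$, bound the infimum in \eqref{Harry Potter and the Philosopher's Stone} from above by the single term at index $k+1$ (your $M'>M$), and then choose $R_\varepsilon$ large enough so that the extra factor of $(1+|\xi|)$ absorbs the constant $m_{k+1}(k+1)!/(L\varepsilon^{k+1})$. The paper's version is marginally cleaner in that it writes out $R_\varepsilon$ explicitly and cites \cite{greenfield1972global} rather than \cite{greenfield1972hypoelliptic}, but the argument is the same.
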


\begin{proof}
By Greenfield-Wallach's condition (\cite{greenfield1972global}), global smooth hypoellipticity of $P$ implies the existence of $L, k, R > 0$ such that 
$$|P(\xi)| \geq \ds \frac{L}{(1+ |\xi|)^{k}}, \ \ |\xi| \geq R.$$
Note that we may consider $k \in \N$. Given $\varepsilon > 0$, we take $R_{\varepsilon} = \ds \max \left\{R, \ \ds \frac{m_{k+1} \cdot (k+1)!}{L \cdot \varepsilon^{k+1}} \right\}$ and obtain that 
$$|\xi| \geq R_{\varepsilon} \ \Rightarrow |P(\xi)| \geq \ds \frac{L}{(1+|\xi|)^{k}} = \ds \frac{L \cdot (1+|\xi|)}{(1+|\xi|)^{k+1}} \geq \ds \frac{m_{k+1} \cdot (k+1)!}{\varepsilon^{k+1} \cdot (1+|\xi|)^{k+1}} \geq \ds \inf_{n \in \N_0} \left(\ds \frac{m_n \cdot n!}{\varepsilon^n \cdot (1+|\xi|)^n} \right), $$
which ends our proof.
\end{proof}

\begin{Cor} \label{Harry Potter and the Chamber of Secrets}
Let $\M$ and $\mathscr{L}$ be weight sequences such that $\E_{\M}(\T^N) \subset \E_{\mathscr{L}}(\T^N)$. If the system \eqref{Gandhi} is globally $\mathscr{L}$-hypoelliptic, it is also globally $\M$-hypoelliptic. 
\end{Cor}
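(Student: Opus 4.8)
The plan is to reduce the statement entirely to the symbol characterization of global hypoellipticity given by Theorem \ref{Spider Man II}, rather than to argue directly with ultradistributions. The naive functional-analytic route does not work: the inclusion $\E_{\M}(\T^N)\subset\E_{\mathscr L}(\T^N)$ only dualizes to $\D'_{\mathscr L}(\T^N)\subset\D'_{\M}(\T^N)$, so a solution $u\in\D'_{\M}(\T^N)$ of \eqref{Gandhi} need not lie in $\D'_{\mathscr L}(\T^N)$, and we cannot simply feed it into the hypothesis. Instead, I would use that Theorem \ref{Spider Man II} converts global $\M$- (resp. $\mathscr L$-) hypoellipticity into the lower bound \eqref{Harry Potter and the Philosopher's Stone} on $|P(\xi)|$, a condition which is monotone with respect to the weight sequence.

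First I would translate the hypothesis $\E_{\M}(\T^N)\subset\E_{\mathscr L}(\T^N)$: by \eqref{The Godfather II} this is equivalent to $\M\preceq\mathscr L$, hence by \eqref{The Godfather} there is a constant $A\geq 1$ with $m_k\leq A^k\cdot\ell_k$ for every $k\in\N_0$ (normalizing $A\geq 1$ using $m_0=\ell_0=1$). Now fix $\varepsilon>0$ and set $\varepsilon'=\varepsilon/A>0$. Applying the characterization \eqref{Harry Potter and the Philosopher's Stone} to the globally $\mathscr L$-hypoelliptic system with parameter $\varepsilon'$, there exists $R_{\varepsilon'}>0$ such that
$$|P(\xi)|\geq\ds\inf_{n\in\N_{0}}\left(\ds\frac{\ell_{n}\cdot n!}{(\varepsilon')^{n}\cdot(1+|\xi|)^{n}}\right),\qquad\forall\,\xi\in\Z^N,\ |\xi|\geq R_{\varepsilon'}.$$
On the other hand, from $m_n\leq A^n\ell_n$ one gets, for every $\xi\in\Z^N$,
$$\ds\inf_{n\in\N_{0}}\left(\ds\frac{m_{n}\cdot n!}{\varepsilon^{n}\cdot(1+|\xi|)^{n}}\right)\leq\ds\inf_{n\in\N_{0}}\left(\ds\frac{A^{n}\cdot\ell_{n}\cdot n!}{\varepsilon^{n}\cdot(1+|\xi|)^{n}}\right)=\ds\inf_{n\in\N_{0}}\left(\ds\frac{\ell_{n}\cdot n!}{(\varepsilon')^{n}\cdot(1+|\xi|)^{n}}\right).$$
Combining the two displays, for $|\xi|\geq R_{\varepsilon'}=:R_{\varepsilon}$ we obtain $|P(\xi)|\geq\ds\inf_{n\in\N_{0}}\bigl(m_{n}\,n!/(\varepsilon^{n}(1+|\xi|)^{n})\bigr)$. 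Since $\varepsilon>0$ was arbitrary, condition \eqref{Harry Potter and the Philosopher's Stone} holds for $\M$, and Theorem \ref{Spider Man II} yields that \eqref{Gandhi} is globally $\M$-hypoelliptic.

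I do not expect a genuine obstacle here: the entire content is the remark that the inclusion of Denjoy–Carleman classes is a purely quantitative comparison $m_k\lesssim A^k\ell_k$ of the two weight sequences, together with the monotonicity in the weight sequence of the symbol inequality \eqref{Harry Potter and the Philosopher's Stone}. The only point requiring mild care is the direction of the estimates — one must dilate the test parameter by $A$ (passing from $\varepsilon$ to $\varepsilon/A$), not by $A^{-1}$ — and the harmless normalization $A\geq 1$ so that $\varepsilon'$ remains positive.
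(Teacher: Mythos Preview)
Your proof is correct and follows essentially the same route as the paper: both translate the inclusion $\E_{\M}(\T^N)\subset\E_{\mathscr L}(\T^N)$ into $m_k\leq A^k\ell_k$ via \eqref{The Godfather}--\eqref{The Godfather II}, feed a rescaled parameter into the symbol criterion \eqref{Harry Potter and the Philosopher's Stone} for $\mathscr L$, and use the monotonicity of the infimum to deduce the criterion for $\M$. Your additional remark explaining why the direct functional-analytic argument fails (the dual inclusion going the wrong way) is a helpful clarification not present in the paper.
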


\begin{proof}
By \eqref{The Godfather} and \eqref{The Godfather II}, there exists $C \geq 1$ such that
$$m_{k} \leq C^{k} \cdot \ell_{k}, \ \forall k \in \N_{0}. $$
Given $\varepsilon > 0$, we find $R_{\varepsilon}$ satisfying
$$|P(\xi)| \geq  \ds \inf_{n \in \N_{0}}\left( \ds \frac {C^{n} \cdot \ell_{n} \cdot n!}{\varepsilon^{n} \cdot (1+|\xi|)^{n}}  \right), \ \ \ \forall \xi \in \Z^N; \ |\xi| \geq R_{\varepsilon}.$$

On the other hand, for every $\xi \in \Z^N$ such that $|\xi| \geq R_{\varepsilon}$, we have
$$\inf_{n \in \N_{0}} \left( \ds \frac{C^{n} \cdot \ell_{n} \cdot n!}{\varepsilon^{n} \cdot (1+|\xi|)^{n}}  \right) \geq \inf_{n \in \N_{0}} \left(\ds \frac{m_{n} \cdot n!} {\varepsilon^{n} \cdot (1+|\xi|)^{n}}  \right) \ \Rightarrow  |P(\xi)| \geq \inf_{n \in \N_{0}} \left(\ds \frac{m_{n} \cdot n!} {\varepsilon^{n} \cdot (1+|\xi|)^{n}}  \right) , $$
which concludes the proof. 
\end{proof}

\subsection{Greenfield-Wallach Vector Fields}

Analogously to what was done in \cite{greenfield1972hypoelliptic}, \cite{greenfield1972global} and \cite{yoshino}, we are able to apply Theorem \ref{Spider Man II} in order to study global $\M$-hypoellipticity of the following system acting on $\T^{2}$:
\begin{equation} \label{The Shawshank Redemption}
P_{\alpha}(D_{1}, D_{2}) = D_{1} - \alpha D_{2}, \ \ \  \ \ \ \alpha \in \C.   
\end{equation}

\noindent and to extend the connections with Number Theory.  By Corollary \ref{Roma}, the interesting cases occur when $\alpha \in \R$.

\begin{Def}
We say $\alpha \in \R \setminus \Q$ is \textbf{Liouville $\M$-exponential} if one can find $\varepsilon > 0$ such that the inequality
$$|\xi - \alpha \eta| < \ds \inf_{n \in \N_{0}} \left( \ds \frac {m_{n} \cdot n!} {\varepsilon^{n} \cdot (1+|\eta|)^{n}} \right), \ \ \ (\xi, \eta) \in \Z \times \Z,$$

\noindent has infinite solutions. 
\end{Def}

\begin{Pro} \label{The Silence of the Lambs} 
Let $\alpha$ be a real number; $P_{\alpha}(D_{1}, D_{2})$ is $\M$-hypoelliptic if and only if $\alpha$ is irrational \textbf{non}-Liouville $\M$-exponential.  
\end{Pro}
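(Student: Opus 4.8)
The plan is to apply Theorem \ref{Spider Man II} to the single operator $P = P_\alpha$ acting on $\T^2$, reducing the statement to an analysis of the growth condition \eqref{Harry Potter and the Philosopher's Stone} for the symbol $P_\alpha(\xi,\eta) = \xi - \alpha\eta$, where $(\xi,\eta) \in \Z\times\Z$ and $|(\xi,\eta)| \sim |\xi| + |\eta|$. First I would dispose of the rational case: if $\alpha = p/q$ with $q \in \N$, then $P_\alpha$ vanishes along the lattice line $(kp, kq)$, $k \in \Z$, which already shows $P_\alpha$ is not $\M$-hypoelliptic (take $u = \sum_k e^{i(kp\,t + kq\,x)}$, which is an ultradistribution by Theorem \ref{Memento} but not smooth, while $P_\alpha u = 0 \in \E_{\M}(\T^2)$); this also matches the fact that "irrational non-Liouville $\M$-exponential" forces irrationality. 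So from now on $\alpha \in \R\setminus\Q$.

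Next I would handle the direction "$\alpha$ non-Liouville $\M$-exponential $\Rightarrow$ $P_\alpha$ globally $\M$-hypoelliptic." By definition, $\alpha$ not being Liouville $\M$-exponential means: for every $\varepsilon > 0$ the set of $(\xi,\eta) \in \Z^2$ with $|\xi - \alpha\eta| < \inf_n \big(m_n n!/(\varepsilon^n(1+|\eta|)^n)\big)$ is finite. I need to convert this into \eqref{Harry Potter and the Philosopher's Stone}, i.e. a lower bound in terms of $(1+|(\xi,\eta)|)$ rather than $(1+|\eta|)$, valid for all large $|(\xi,\eta)|$. The key observation is that when $|\xi|$ is much larger than $|\eta|$ — roughly $|\xi| \ge 2|\alpha||\eta| + 1$ — one has $|\xi - \alpha\eta| \ge |\xi|/2 \ge c(1+|(\xi,\eta)|)$, and since $\inf_n(m_n n!/(\varepsilon^n(1+|(\xi,\eta)|)^n)) \le m_1\cdot 1!/(\varepsilon(1+|(\xi,\eta)|)) \to 0$, the estimate \eqref{Harry Potter and the Philosopher's Stone} holds automatically for such points once $|(\xi,\eta)|$ is large. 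In the complementary region $|\xi| \le 2|\alpha||\eta|+1$ we have $(1+|(\xi,\eta)|) \le C(1+|\eta|)$ for a constant $C$ depending only on $\alpha$, so replacing $\varepsilon$ by $\varepsilon/C$ (and using that $\inf_n$ is monotone in the relevant parameter) converts the hypothesis bound in $(1+|\eta|)$ into the desired bound in $(1+|(\xi,\eta)|)$; the finiteness of the exceptional set lets us absorb those finitely many $(\xi,\eta)$ by enlarging $R_\varepsilon$. This gives \eqref{Harry Potter and the Philosopher's Stone}, hence global $\M$-hypoellipticity by Theorem \ref{Spider Man II}.

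For the converse, "$P_\alpha$ globally $\M$-hypoelliptic $\Rightarrow$ $\alpha$ irrational non-Liouville $\M$-exponential", I argue contrapositively. If $\alpha$ is Liouville $\M$-exponential, there is a fixed $\varepsilon_0 > 0$ and infinitely many $(\xi,\eta)$ with $|\xi - \alpha\eta| < \inf_n(m_n n!/(\varepsilon_0^n(1+|\eta|)^n))$; along such a sequence one checks $|\eta| \to \infty$ (only finitely many solutions can have bounded $|\eta|$, since for fixed $\eta$ the inequality pins $\xi$ to an interval of bounded length), and then $|(\xi,\eta)| \le C(1+|\eta|)$, so $|P_\alpha(\xi,\eta)| < \inf_n(m_n n!/(\varepsilon_1^n(1+|(\xi,\eta)|)^n))$ for $\varepsilon_1 = \varepsilon_0 C$ — this negates \eqref{Harry Potter and the Philosopher's Stone}, so by Theorem \ref{Spider Man II} the operator is not $\M$-hypoelliptic. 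Combined with the rational case above, failure of "irrational non-Liouville $\M$-exponential" always destroys $\M$-hypoellipticity.

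The main obstacle is the bookkeeping that translates between the one-variable growth bound in $(1+|\eta|)$ built into the definition of "Liouville $\M$-exponential" and the two-variable bound in $(1+|(\xi,\eta)|)$ required by Theorem \ref{Spider Man II}; the trick is the case split on $|\xi|$ versus $|\eta|$, where in the "$\xi$ large" regime the symbol is automatically big and the infimum is automatically small, so only the "$\xi$ comparable to $\eta$" regime needs the constant-$C$ comparison of $(1+|(\xi,\eta)|)$ with $(1+|\eta|)$. I also want to be a little careful that replacing $\varepsilon$ by $\varepsilon/C$ interacts correctly with the $\inf_n$ — since $\inf_n(m_n n!/(\varepsilon^n r^n))$ is increasing in $\varepsilon$, shrinking $\varepsilon$ only makes the right-hand side smaller, which is the favorable direction for establishing a lower bound on $|P_\alpha|$.
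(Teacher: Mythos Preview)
Your approach is essentially the paper's: reduce to Theorem \ref{Spider Man II}, handle the rational case directly, and for irrational $\alpha$ translate between the $(1+|\eta|)$-condition in the definition of Liouville $\M$-exponential and the $(1+|(\xi,\eta)|)$-condition \eqref{Harry Potter and the Philosopher's Stone} via a case split comparing $|\xi|$ and $|\eta|$.

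Two small points. First, in the direction ``non-Liouville $\Rightarrow$ hypoelliptic'' no case split is needed: since always $(1+|\eta|) \leq (1+|(\xi,\eta)|)$, one has termwise
\[
\frac{m_n\,n!}{\varepsilon^n(1+|\eta|)^n}\;\geq\;\frac{m_n\,n!}{\varepsilon^n(1+|(\xi,\eta)|)^n},
\]
hence the infimum over $n$ on the left dominates that on the right with the \emph{same} $\varepsilon$, and \eqref{Harry Potter and the Philosopher's Stone} follows immediately from the non-Liouville hypothesis. The paper does it this way; your $|\xi|$-versus-$|\eta|$ split and $\varepsilon/C$ substitution are only genuinely needed in the \emph{other} direction (passing from $(1+|(\xi,\eta)|)$ back down to $(1+|\eta|)$), where both you and the paper use them. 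Second, your closing monotonicity claim is backwards: $\inf_n\bigl(m_n\,n!/(\varepsilon^n r^n)\bigr)$ is \emph{decreasing} in $\varepsilon$, so shrinking $\varepsilon$ makes the right-hand side larger, not smaller. This slip does no real damage --- as just noted, the step it is meant to justify is superfluous in that direction anyway --- but you should correct it.
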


\begin{proof}
Since the symbol of $P_\alpha$ is given by
$$P_{\alpha}(\xi, \eta) = \ds \frac{1}{i} \cdot \left(\xi - \alpha \eta \right), \ \ (\xi, \eta) \in \Z \times \Z,  $$ 
when $\alpha \in \Q$ it is possible to obtain a sequence $\left\{\xi_{m}, \eta_{m} \right\}_{m \in \N}$ such that $P_{\alpha}(\xi_{m}, \eta_{m}) = 0$, for each natural $m$. Thus  
$$|P_{\alpha}(\xi_{m}, \eta_{m})| < \ds \inf_{n \in \N_{0}}  \left(\ds \frac{m_{n} \cdot n!}{\varepsilon^{n} \cdot (1+|\xi_{m}| + |\eta_{m}|)^{n}}  \right), \ \ \forall m \in \N. $$

\noindent By Theorem \ref{Spider Man II}, $P_{\alpha}(D_{1}, D_{2})$ is not globally $\M$-hypoelliptic.

We proceed to the case where $\alpha$ is irrational; if $\alpha$ is non-Liouville $\M$-exponential, for every $\varepsilon > 0$, one can find $R_{\varepsilon} > 0$ such that

$$|\xi - \alpha \eta| \geq \ds \inf_{n \in \N_{0}} \left( \ds \frac{m_{n} \cdot n!} {\varepsilon^{n} \cdot (1+|\eta|)^{n}} \right), \ \ |\xi| + |\eta| \geq R_{\varepsilon}. $$ 
Hence 
\begin{equation} \label{Harry Potter and the Prisoner of Azkaban}
\forall \varepsilon > 0, \ \exists R_{\varepsilon}>0; \ \ \ |\xi - \alpha \eta| \geq  \ds \inf_{n \in \N_{0}} \left(\ds \frac{m_{n} \cdot n!} {\varepsilon^{n} \cdot (1+|\xi| + |\eta|)^{n}} \right), \ \ |\xi| + |\eta| \geq R_{\varepsilon},
\end{equation}
and $P_\alpha$ is $\M$-hypoelliptic (Theorem \ref{Spider Man II}). 

On the other hand, suppose $P_{\alpha}$ globally $\M$-hypoelliptic. It follows from Theorem  \ref{Spider Man II} that \eqref{Harry Potter and the Prisoner of Azkaban} holds.  For any $\delta > 0$, we take $\varepsilon = \ds \frac{\delta}{(|\alpha| + 1)}$. In the situation where $|\xi - \alpha \eta| > 1$,  

$$ |\xi - \alpha \eta| > 1 \geq  \ds \inf_{n \in \N_{0}} \left( \ds \frac{m_{n} \cdot n!}{\varepsilon^{n} \cdot (1+|\eta|)^{n}}  \right).$$

\noindent Otherwise $|\alpha| \cdot |\eta| - |\xi| \leq 1 \ \Rightarrow |\alpha| \cdot |\eta| \leq (1+|\xi|)$. So, when $|\xi| + |\eta| \geq R_{\varepsilon},$
\begin{align*}
|\xi - \alpha \eta| &\geq \ds \inf_{n \in \N_{0}} \left( \ds \frac{m_{n} \cdot n!} {\varepsilon^{n} \cdot (1+|\xi| + |\eta|)^{n}} \right) \\
& \geq \ds \inf_{n \in \N_{0}} \left( \ds \frac{m_{n} \cdot n!} {(\varepsilon \cdot (|\alpha| + 1))^{n} \cdot  (|\eta|)^{n}} \right)  \\
& \geq \ds \inf_{n \in \N_{0}} \left( \ds \frac{m_{n} \cdot n!}{\delta^{n} \cdot (1 +|\eta|)^{n}}  \right),
\end{align*}
since we may consider $\eta \neq 0$, which shows that $\alpha$ is non-Liouville $\M$-exponential. 
\end{proof}

We verified in Corollary \ref{Harry Potter and the Chamber of Secrets} that if $\M$, $\mathscr{L}$ are weight sequences, $\E_{\M}(\T^N) \subset \E_{\mathscr{L}}(\T^N)$ and $P_\alpha$ is globally $\mathscr{L}$-hypoelliptic, then it is also globally $\M$-hypoelliptic.  On the other hand, it is proved in \cite{yoshino} that given $r, s \geq 1$ with $r < s$, one can find $\alpha \in \R \setminus \Q$ such that  $P_{\alpha}$ is globally $\mathcal{G}^{r}$-hypoelliptic, but not globally $\mathcal{G}^{s}$-hypoelliptic. Our goal here is to extend this result to Denjoy-Carleman Classes in general.

\begin{Def}
Let $ \mathscr{L} = \left\{\ell_{n} \right\}_{n \in \N_{0}}, \M = \left\{m_{n} \right\}_{n \in \N_{0}}$ be arbitrary weight sequences.  We will denote $\M \prec  \mathscr{L} $ if $\ds\lim_{k \to +\infty} \left(\ds \frac{m_{k}}{\ell_{k}} \right)^{1/k} = 0$.  
\end{Def}

\begin{Obs} \label{Mission Impossible: Ghost Protocol}
By notation set in \eqref{The Godfather}, when $\M \prec \mathscr{L}$, we have that $\M \preceq \mathscr{L}$ and  $\mathscr{L}  \npreceq \M$.  
\end{Obs}

\begin{Lem} \label{Blade Runner 2049}
If  $\M \prec \mathscr{L} $, $\ds \lim_{t \to +\infty} \ds \frac{\left[ \ds \sup_{p \in \N_{0}} \left( \frac{t^p}{\ell_{p} \cdot p!} \right) \right]}{\left[\ds \sup_{p \in \N_{0}} \left( \frac{\delta^p \cdot  t^p}{m_{p} \cdot p!} \right) \right]} = 0$, for any $\delta > 0$. 
\end{Lem}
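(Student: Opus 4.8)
The plan is to reduce the ratio of suprema to a ratio that is controlled term-by-term, exploiting that $\M \prec \mathscr{L}$ forces $\ell_p/m_p$ to grow superexponentially in $p$. Fix $\delta > 0$. Denote the associated functions by $\Phi_{\mathscr{L}}(t) = \ds\sup_{p \in \N_0}\frac{t^p}{\ell_p \cdot p!}$ and $\Phi_{\M,\delta}(t) = \ds\sup_{p \in \N_0}\frac{\delta^p t^p}{m_p \cdot p!}$. By Remark \ref{Apollo 13}, for each $t>0$ the supremum defining $\Phi_{\mathscr{L}}(t)$ is attained at some index $p = p(t) \in \N_0$, and $p(t) \to +\infty$ as $t \to +\infty$ (since each individual term $t^p/(\ell_p p!)$ tends to $0$, the maximizing index must escape to infinity). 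The key observation is the pointwise bound, valid for the maximizing index $p = p(t)$,
$$\Phi_{\mathscr{L}}(t) = \frac{t^{p}}{\ell_{p}\cdot p!} = \frac{\delta^{p} t^{p}}{m_{p}\cdot p!}\cdot\frac{1}{\delta^{p}}\cdot\frac{m_{p}}{\ell_{p}} \leq \Phi_{\M,\delta}(t)\cdot\left(\frac{1}{\delta}\cdot\left(\frac{m_p}{\ell_p}\right)^{1/p}\right)^{p},$$
so that
$$\frac{\Phi_{\mathscr{L}}(t)}{\Phi_{\M,\delta}(t)} \leq \left(\frac{1}{\delta}\cdot\left(\frac{m_{p(t)}}{\ell_{p(t)}}\right)^{1/p(t)}\right)^{p(t)}.$$

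Now I would finish as follows. Since $\M \prec \mathscr{L}$ means $\left(m_k/\ell_k\right)^{1/k} \to 0$, there is $k_0$ such that $\left(m_k/\ell_k\right)^{1/k} \leq \tfrac{1}{2\delta}$ for all $k \geq k_0$; hence $\tfrac{1}{\delta}(m_k/\ell_k)^{1/k} \leq \tfrac{1}{2}$ for $k \geq k_0$. Choose $T$ large enough that $p(t) \geq k_0$ whenever $t \geq T$. Then for $t \geq T$ the right-hand side above is at most $(1/2)^{p(t)}$, and since $p(t)\to+\infty$, this bound tends to $0$ as $t \to +\infty$. Because $\Phi_{\mathscr{L}}(t)/\Phi_{\M,\delta}(t) \geq 0$, the squeeze gives the claimed limit $0$.

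The only delicate point is justifying that the maximizing index $p(t)$ genuinely diverges; this follows from Remark \ref{Apollo 13}, which guarantees the supremum is a maximum, together with the fact that for fixed $p$ one has $\Phi_{\mathscr{L}}(t) \geq t^p/(\ell_p p!) \to +\infty$, so for any fixed $P$ the maximizing index must eventually exceed $P$. No new properties of weight sequences beyond Remark \ref{Apollo 13} and the definition of $\prec$ are needed, so this argument is essentially self-contained once those are invoked.
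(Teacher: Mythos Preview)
Your argument is correct and takes a genuinely different, more elementary route than the paper.  Both proofs begin by observing that the index $p(t)$ at which $\Phi_{\mathscr{L}}(t)$ attains its supremum must diverge (the paper proves this explicitly with the estimate \eqref{Goodfellas} for $t\geq \ell_{k_0}\cdot k_0!$; your justification via comparison with $t^{P+1}/(\ell_{P+1}(P+1)!)$ is equivalent).  From there the arguments part ways.  You bound the ratio directly by $\bigl(\tfrac{1}{\delta}(m_{p(t)}/\ell_{p(t)})^{1/p(t)}\bigr)^{p(t)}$ and invoke only the definition of $\prec$; the paper instead uses the inequality $\ell_k\geq H^{(q+1)k}m_k$ together with Lemma~\ref{Jaws} (and hence the moderate-growth hypothesis \eqref{Prisoners}) to obtain the sharper quantitative bound $\Phi_{\mathscr{L}}(t)/\Phi_{\M,\delta}(t)\leq [\Phi_{\M,\delta}(t)]^{-1/2}\leq \sqrt{2m_2}/(\delta t)$.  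The gain of the paper's approach is an explicit $O(1/t)$ rate, while your approach has the advantage of requiring no structural hypotheses on the weight sequences beyond $\M\prec\mathscr{L}$.  Since the only later use of this lemma in Theorem~\ref{Road to Perdition} is the bare limit statement, your proof is fully adequate for the purposes of the paper.
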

\begin{proof}
Given $\delta > 0$, take $H$ as in \eqref{Prisoners} and $q \in \N$ satisfying $ \ds \frac{1}{H^{q}} \leq \delta$.  Note that by hypothesis, $\ds\lim_{k \to +\infty} \left(\ds \frac{\ell_{k}}{m_{k}} \right)^{1/k} = + \infty$, and thus there exists $k_{0} \in \N$ such that 
\begin{equation} \label{Harry Potter and the Goblet of Fire}
\left(\ds \frac{\ell_{k}}{m_{k}} \right)^{1/k} \geq H^{q + 1}, \ \ \forall k \geq k_{0}. 
\end{equation}
Consider $t \geq \ell_{k_{0}} \cdot (k_{0}!)$; when $s < k_0$, 
\begin{equation} \label{Goodfellas} 
  \ds \frac{t^{k_{0}}}{\ell_{k_{0}} \cdot k_{0}!} \div \ds \frac{t^{s}}{\ell_{s} \cdot s!} = t^{k_{0} -s} \cdot \ds \frac{\ell_{s} \cdot s!}{\ell_{k_{0}} \cdot k_{0}!} \geq [\ell_{k_{0}} \cdot k_{0}!]^{k_{0} - s - 1} \cdot \ell_{s} \cdot s! \geq  1.
\end{equation}
 
Hence, if $ t \geq \ell_{k_{0}} \cdot (k_{0}!)$, it follows from \eqref{Harry Potter and the Goblet of Fire} and \eqref{Goodfellas} that  
\begin{equation} \label{Harry Potter and the Order of the Phoenix}
\ds \frac{\left[\ds \sup_{p \in \N_0} \left(\ds \frac{t^{p}}{\ell_{p} \cdot p!} \right) \right]}{\left[\ds \sup_{p \in \N_{0}} \left(\ds \frac{(\delta t)^{p}}{m_{p} \cdot p!} \right) \right]} = \ds \frac{\left[\ds \sup_{p \geq k_{0}} \left(\ds \frac{t^{p}}{\ell_{p} \cdot p!} \right) \right]}{\left[\ds \sup_{p \in \N_{0}} \left(\ds \frac{(\delta t)^{p}}{m_{p} \cdot p!} \right) \right]}  \leq \ds \frac{\left[\ds \sup_{p \in \N_{0}} \left(\ds \frac{t^{p}}{(H^{q + 1})^{p} \cdot m_{p} \cdot p!} \right) \right]}{\left[\ds \sup_{p \in \N_{0}} \left(\ds \frac{(\delta t)^{p}}{m_{p} \cdot p!} \right) \right]} .    
\end{equation}
By Lemma \ref{Jaws}, 
\begin{equation} \label{Harry Potter and the Half-Blood Prince}
\left[\ds \sup_{p \in \N_{0}} \left(\ds \frac{t^{p}}{(H^{q + 1})^{p} \cdot m_{p} \cdot p!} \right) \right] \leq \left[\ds \sup_{p \in \N_{0}} \left(\ds \frac{t^{p}}{(H^{q})^{p} \cdot m_{p} \cdot p!} \right) \right]^{1/2} \leq \left[\ds \sup_{p \in \N_{0}} \left(\ds \frac{\delta^{p} \cdot t^{p}}{m_{p} \cdot p!} \right) \right]^{1/2}.
\end{equation}
From \eqref{Harry Potter and the Order of the Phoenix} and \eqref{Harry Potter and the Half-Blood Prince}, we infer that 
$$\ds \frac{\left[ \ds \sup_{p \in \N_{0}} \left( \frac{t^p}{\ell_{p} \cdot p!} \right) \right]}{\left[\ds \sup_{p \in \N_{0}} \left( \frac{\delta^p \cdot  t^p}{m_{p} \cdot p!} \right) \right]} \leq \left[\ds \sup_{p \in \N_{0}} \left(\ds \frac{\delta^{p} \cdot t^{p}}{m_{p} \cdot p!} \right) \right]^{-1/2} \leq \ds \frac{\sqrt{2 \cdot m_2}}{\delta t}, \ \ \forall t \geq \ell_{k_0} \cdot (k_{0})!, $$
which proves the assertion. 
\end{proof}

\begin{Teo} \label{Road to Perdition} 
Let $\mathscr{L}$, $\M$ be weight sequences such that $\M \prec \mathscr{L}$ and  $P_{\alpha}$ as vector field as in  $\eqref{The Shawshank Redemption}$. 
\begin{enumerate} [leftmargin=*]
 \item \label{Harry Potter and the Deathly Hallows} When $P_{\alpha}$ is globally $\mathscr{L}$-hypoelliptic, it is also globally $\M$-hypoelliptic. 
 \item \label{Mission Impossible 2} There exists  $\beta \in \R \setminus \Q$ such that $P_{\beta}$ is globally $\mathscr{\M}$-hypoelliptic, but \textbf{not} globally $\mathscr{L}$-hypoelliptic. 
\end{enumerate}

\end{Teo}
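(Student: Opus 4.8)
Item \ref{Harry Potter and the Deathly Hallows} is immediate: by Remark \ref{Mission Impossible: Ghost Protocol}, $\M \prec \mathscr{L}$ implies $\M \preceq \mathscr{L}$, hence $\E_{\M}(\T^2) \subset \E_{\mathscr{L}}(\T^2)$ by \eqref{The Godfather II}, and Corollary \ref{Harry Potter and the Chamber of Secrets} (applied with $k=1$) shows that global $\mathscr{L}$-hypoellipticity of $P_\alpha$ forces global $\M$-hypoellipticity. For item \ref{Mission Impossible 2}, write $T_{\mathscr{N}}(t) = \sup_{p \in \N_0} \frac{t^p}{n_p \cdot p!}$ for the associated function of a weight sequence $\mathscr{N} = \{n_p\}$; by Remark \ref{Apollo 13} one has $\inf_{p \in \N_0} \frac{n_p \cdot p!}{\varepsilon^p (1+|\eta|)^p} = \frac{1}{T_{\mathscr{N}}(\varepsilon(1+|\eta|))}$, and $T_{\mathscr{N}}(t) \ge t$ since $n_1 = 1$ by \eqref{Rocky}. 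By Proposition \ref{The Silence of the Lambs} it is then enough to produce $\beta \in \R \setminus \Q$ that is \emph{Liouville} $\mathscr{L}$-exponential (so that $P_\beta$ is not globally $\mathscr{L}$-hypoelliptic) and \emph{non-Liouville} $\M$-exponential (so that $P_\beta$ is globally $\M$-hypoelliptic).

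The plan is to build $\beta$ from its continued fraction expansion $\beta = [a_0; a_1, a_2, \dots]$, with convergents $p_j/q_j$, choosing the partial quotients $a_{j+1}$ recursively so that
$$ T_{\mathscr{L}}(1 + q_j) \;\le\; q_{j+1} \;\le\; 3\, T_{\mathscr{L}}(1 + q_j), \qquad j \ge 1. $$
This is possible because $q_{j+1} = a_{j+1} q_j + q_{j-1}$ and $T_{\mathscr{L}}(1 + q_j) \ge 1 + q_j > q_j$, and it forces $q_j \uparrow \infty$, so $\beta$ is irrational. Writing $\|x\|$ for the distance from $x$ to $\Z$, the classical bound $\|q_j \beta\| = |q_j \beta - p_j| < \frac{1}{q_{j+1}} \le \frac{1}{T_{\mathscr{L}}(1 + q_j)}$ shows that the infinitely many pairs $(p_j, q_j)$ solve $|\xi - \beta \eta| < \inf_{p \in \N_0} \frac{\ell_p \cdot p!}{\varepsilon^p (1+|\eta|)^p}$ with $\varepsilon = 1$, so $\beta$ is Liouville $\mathscr{L}$-exponential.

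To check that $\beta$ is non-Liouville $\M$-exponential, fix $\varepsilon > 0$. For $\eta$ with $q_j \le |\eta| < q_{j+1}$, the best-approximation property of continued-fraction convergents gives $\|\eta \beta\| \ge \|q_j \beta\| \ge \frac{1}{q_{j+1} + q_j} > \frac{1}{2 q_{j+1}} \ge \frac{1}{6\, T_{\mathscr{L}}(1 + q_j)}$, hence (since $|\xi - \beta\eta| \ge \|\eta\beta\|$) $|\xi - \beta \eta| > \frac{1}{6\, T_{\mathscr{L}}(1 + q_j)}$ for every $\xi \in \Z$. By Lemma \ref{Blade Runner 2049} with $\delta = \varepsilon$ there is $J(\varepsilon)$ such that $6\, T_{\mathscr{L}}(1 + q_j) \le T_{\M}(\varepsilon(1 + q_j)) \le T_{\M}(\varepsilon(1 + |\eta|))$ for $j \ge J(\varepsilon)$ (using $|\eta| \ge q_j$ and monotonicity of $T_{\M}$), so $|\xi - \beta \eta| \ge \frac{1}{T_{\M}(\varepsilon(1+|\eta|))} = \inf_{p \in \N_0} \frac{m_p \cdot p!}{\varepsilon^p (1+|\eta|)^p}$ for all $|\eta| \ge q_{J(\varepsilon)}$ and all $\xi \in \Z$. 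For the finitely many remaining values $|\eta| < q_{J(\varepsilon)}$ the strict inequality holds for only finitely many $\xi$, since $\inf_{p \in \N_0} \frac{m_p \cdot p!}{\varepsilon^p (1+|\eta|)^p} \le 1$. Thus for every $\varepsilon > 0$ the defining inequality has only finitely many solutions, i.e.\ $\beta$ is non-Liouville $\M$-exponential, and Proposition \ref{The Silence of the Lambs} finishes the argument.

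The delicate point is the compatibility of the two requirements imposed on the single sequence $\{q_j\}$: each $q_{j+1}$ must be large enough ($\gtrsim T_{\mathscr{L}}(1 + q_j)$) to force extremely sharp $\mathscr{L}$-approximation at scale $q_j$, yet small enough ($\lesssim T_{\M}(\varepsilon q_j)$ eventually, for \emph{every} $\varepsilon > 0$) to keep $\M$-approximation bad throughout the block $q_j \le |\eta| < q_{j+1}$. This is precisely the quantitative separation $T_{\mathscr{L}}(t) = o\big(T_{\M}(\delta t)\big)$ furnished by Lemma \ref{Blade Runner 2049}; the only other non-formal ingredient is the classical best-approximation theorem for continued fractions, which yields $\|\eta \beta\| \ge \|q_j \beta\|$ on each block.
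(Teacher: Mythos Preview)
Your proof is correct and follows essentially the same route as the paper's: both construct $\beta$ via a continued fraction whose denominators satisfy $q_{j+1}\asymp T_{\mathscr{L}}(1+q_j)$, use the classical two-sided bounds $\frac{1}{q_{j+1}+q_j}<|p_j-\beta q_j|<\frac{1}{q_{j+1}}$ together with the best-approximation theorem, and invoke Lemma~\ref{Blade Runner 2049} to pass from the $\mathscr{L}$-scale to the $\M$-scale. The only differences are cosmetic: the paper writes down an explicit formula $a_n=\lfloor T_{\mathscr{L}}(q_{n-1}+1)/q_{n-1}\rfloor$ whereas you specify the target window for $q_{j+1}$ and argue existence, and the paper first proves the intermediate bound $|p-q\beta|\ge \tfrac12\inf_r \ell_r r!/(q+1)^r$ before applying the lemma while you collapse these two steps.
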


\begin{proof}
The proof of \ref{Harry Potter and the Deathly Hallows} follows immediately from Corollary  \ref{Harry Potter and the Chamber of Secrets} and Remark \ref{Mission Impossible: Ghost Protocol}. In order to prove \ref{Mission Impossible 2}, we will use \cite{hardy1979introduction} as reference for theory of continued fractions and apply an argument based on \cite{yoshino} to exhibit $\beta = [a_{0}, a_{1},\ldots, a_{n}, \ldots ]$ in the interval $(0,1)$ satisfying the conditions required, defining the sequence $\left\{a_{n} \right\}_{n \in \N_{0}}$ recursively. 

Following  \cite{hardy1979introduction} (Teo. 149), we introduce  $\left\{p_{n} \right\}_{n \in \N_{0}}, \left\{q_{n} \right\}_{n \in \N_{0}}$ as
\begin{equation} \label{Remember the Titans}
p_{0} = 0, \ \ \ \ \ p_{1} = 1, \ \ \ \ \ p_{n} = a_{n} \cdot p_{n-1} + p_{n-2} \ \ \ (2 \leq n).   
\end{equation}
\begin{equation} \label{Man on Fire}
q_{0} = 1, \ \ \ \ \ q_{1} = 0, \ \ \ \ \ q_{n} = a_{n} \cdot q_{n-1} + q_{n-2} \ \ \ (2 \leq n).   
\end{equation}

\noindent We also denote (Sec. 10.9)  $\left\{a_{n}'\right\}_{n \in \N_{0}}$ as 

$$a'_{n} = [a_{n}, a_{n+1}, \ldots], \ \ \forall n \in \N_{0}.$$  

\noindent The following results hold: 

\begin{enumerate} [leftmargin=*]
 \item \label{Gladiator} (Thm 155, 156).   For $n > 3$, $q_{n+1} \geq q_{n} \geq n$. Thus $\ds \lim_{n \to \infty} q_{n} = + \infty $. 
 \item \label{American Gangster}(Thm 168). For every $n \in \N_{0}$, $\left\lfloor a_{n}'\right\rfloor = a_{n}$, where $\left\lfloor {.} \right\rfloor$ is the floor function.   
 \item \label{Black Hawk Down} (Thm 171). For every $n \geq 1$, $|p_{n} - \beta \cdot q_{n}| = \ds \frac{1}{a_{n+1}' \cdot q_{n} + q_{n-1}}$. Hence $|p_{n} - \beta \cdot q_{n}|$ is strictly decreasing and tends to $0$. 
 \item \label{The Martian}(Thm 182). Let  $p \in \Z$, $q \in \N$ such that  $gcd \ (p,q) = 1$ and $q_{k} \leq q < q_{k+1}$. Then   
$$|p - q \cdot \beta | \geq |p_{k} - q_{k} \cdot \beta| > |p_{k+1} - q_{k+1} \cdot \beta|.$$

\end{enumerate}

Consider $a_{0} = 0$  and suppose $a_{j}$ set for  $0 \leq j \leq n-1$. It follows from \eqref{Remember the Titans} and \eqref{Man on Fire} that $p_{j}, q_{j}$ are well defined for $j \leq n-1$.  We take then 

$$
a_{n} = 
\begin{cases}
\left\lfloor \ds \sup_{r \in \N_0} \left(\ds \frac{(q_{n-1} + 1)^{r}}{\ell_{r} \cdot r!} \right) \Big{/} q_{n-1}  \right\rfloor, \  &n \neq 2. \\
1, \ &n = 2.
\end{cases}
$$

Let us show that for $\beta = \left[a_0, a_1, \ldots, a_n, \ldots \right]$ $P_{\beta}$ is globally  $\mathscr{L}$-hypoelliptic. Put $p \in \Z$, $q \in \N$ such that $gcd \ (p,q) = 1$. From \ref{Gladiator}., we conclude the existence of $k_{0} \in \N$ such that $q_{k_{0}} \leq q < q_{k_{0} + 1}$. From \ref{Black Hawk Down}. and \ref{The Martian}., we obtain: 
\begin{equation} \label{Philadelphia}
|p - q \cdot \beta | \geq |p_{k_{0}} - q_{k_{0}} \cdot \beta| = \ds \frac{1}{a_{k_{0}+1}' \cdot q_{k_{0}} + q_{k_{0} - 1}}.  
\end{equation}

On the other hand, if $q \geq q_{5}$, we infer from \ref{American Gangster}. and definition of $\left\{a_{n} \right\}_{n \in \N_{0}}$ that 
\begin{align*}
a_{k_{0}+1}' \cdot q_{k_{0}} + q_{k_{0} - 1} &= \left\lfloor \ds \sup_{r \in \N_0} \left(\ds \frac{(q_{k_0} + 1)^{r}}{\ell_{r} \cdot r!} \right) \Big{/} q_{k_0}  \right\rfloor \cdot q_{k_{0}} + q_{k_{0} - 1} \\
& \leq 2 \ds \sup_{r \in \N_0} \left(\ds \frac{(q + 1)^{r}}{\ell_{r} \cdot r!} \right).
\end{align*}
Hence, by \eqref{Philadelphia},
\begin{equation} \label{Fences}
|p - q \cdot \beta| \geq \ds \frac{1}{2} \cdot \ds \inf_{r \in \N_0} \left(\ds \frac{\ell_{r} \cdot r!} {(q + 1)^{r}} \right). 
\end{equation}

We now fix $\delta > 0$; it follows from  Lemma \ref{Blade Runner 2049} that  $\ds \lim_{t \to +\infty} \ds \frac{\ds \inf_{r \in \N_0} \left(\ds \frac{m_{r} \cdot r!} {\delta^{r} \cdot t^{r}} \right)}{\ds \inf_{r \in \N_0} \left(\ds \frac{\ell_{r} \cdot r!} {t^{r}} \right)} = 0$. Thus there exists $s \in \N$, which we may consider $s \geq q_{5}$, such that  

$$\ds \inf_{r \in \N_0} \left(\ds \frac{ m_{r} \cdot r!} {\delta^{r} \cdot t^{r}} \right) \leq \ds \frac{1}{2} \cdot \ds \inf_{r \in \N_0} \left(\ds \frac{\ell_{r} \cdot r!} {t^{r}} \right), \ \ \ \ \ t \geq s.$$

\noindent So it follows from \eqref{Fences} and the estimate above that
\begin{equation} \label{Hurricane}
|p - q \cdot \beta| \geq \ds \inf_{r \in \N_0} \left(\ds \frac{ m_{r} \cdot r!} {\delta^{r} \cdot (1+q)^{r}} \right), \ \ \forall q \geq s. 
\end{equation}

Consider now that $|p| + |q| \geq 2s$. If $q \geq s$, we apply \eqref{Hurricane}. Otherwise,  
$$|p - q \cdot \beta| \geq |p| - |q| \geq (s+1) - (s-1) = 2 >  \ds \inf_{r \in \N_0} \left(\ds \frac{ m_{r} \cdot r!} {\delta^{r} \cdot (1+q)^{r}} \right). $$
Therefore, by Proposition \ref{The Silence of the Lambs} we deduce that $P_{\beta}$ is globally $\M$-hypoelliptic.  

To prove the second part, let us estimate $|p_{k_{0}} - q_{k_{0}} \cdot \beta|$ from below:
\begin{align*}
a_{k_{0}+1}' \cdot q_{k_{0}} + q_{k_{0} - 1} &= \left\lfloor \ds \sup_{r \in \N_0} \left(\ds \frac{(q_{k_0} + 1)^{r}}{\ell_{r} \cdot r!} \right) \Big{/} q_{k_0}  \right\rfloor \cdot q_{k_{0}} + q_{k_{0} - 1} \\
&> \ds \sup_{r \in \N_0} \left(\ds \frac{(q_{k_0} + 1)^{r}}{\ell_{r} \cdot r!} \right) - q_{k_{0}}.
\end{align*}
With a very similar argument to the one applied in Lemma \ref{Blade Runner 2049}, one can prove that 
$$\ds \lim_{t \to + \infty} \ds \frac{\ds \sup_{r \in \N_0} \left(\ds \frac{t^{r}}{\ell_{r} \cdot r!} \right)}{t} = + \infty.$$ 

Thence, it follows from \ref{Gladiator} that we can find $d \in \N$ such that if $k_{0} \geq d$,  

$$a_{k_{0}+1}' \cdot q_{k_{0}} + q_{k_{0} - 1} > \ds \frac{1}{2} \cdot \ds \sup_{r \in \N_0} \left(\ds \frac{(q_{k_{0}} + 1)^{r}}{\ell_{r} \cdot r!} \right). $$

\noindent That is, 
\begin{equation} \label{Wild Strawberries}
|p_{n} - q_{n} \cdot \beta| < 2 \cdot \ds \inf_{r \in \N_0} \left(\ds \frac{\ell_{r} \cdot r!} {(1 + q_n)^{r}} \right), \ \ \ n \geq d. 
\end{equation}

\noindent It is not difficult to see that \eqref{Wild Strawberries} shows us that $\beta$ is Liouville  $\mathscr{L}$-exponential, as we intended to prove.  
\end{proof}

\subsection[Normal Form]{Global $\M$-hypoellipticity for a Class of Systems of Real Vector Fields}

In this subsection, our environment will be the $(N+1)$ dimensional torus, denoted as $\T^{N+1}$, for some natural number $N$. We will denote its elements as $(t,x)$, with $t \in \T^{N}$, $x \in \T$ and consider the following system of vector fields:

\begin{equation} \label{It's a Wonderful Life}
L_{j} = \ds \frac{\partial}{\partial t_{j}} + a_{j} (t_{j}) \cdot \ds \frac{\partial}{\partial x}, \ \ \ \ \ \ \ (j = 1, 2, \ldots, N), 
\end{equation}
where each $a_j$ is a real-valued element of $\E_{\M}(\T)$. 

In a similar way to what was  done in \cite{hounie}(smooth case) and \cite{arias} (Gevrey case),  we intend to show that \eqref{It's a Wonderful Life} and
\begin{equation} \label{2001} 
\tilde{L}_{j} = \ds \frac{\partial}{\partial t_{j}} + a_{j_{0}} \cdot \ds \frac{\partial}{\partial x}, \ \ \ \ \ \ \ (j = 1, 2, \ldots, N),
\end{equation}
where $a_{j_{0}} = \left[\ds \frac{1}{2 \pi} \cdot \ds \int_{0}^{2 \pi} a_{j}(s) \ ds \right]$ is $a_{j}$'s average, are equivalent in terms of global $\M$-hypoellipticity. We start stating some technical results.

\begin{Lem} \label{Casablanca} (Proposition 4.4 of \cite{bierstone2004resolution})
Let $n$ be a natural number and $k_{1}, \ldots, k_{n}$ non-negative integers such that $k_{1} + 2k_{2} + \ldots + nk_{n} = n$. Then, for  $k:= k_{1} + k_{2} + \ldots + k_{n}$, we have that
$$m_{k} \cdot \ds \prod_{\ell=1}^{n} (m_{\ell})^{k_{\ell}} \leq m_{n}.$$
\end{Lem}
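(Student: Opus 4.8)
The plan is to pass to logarithms and reduce the statement to an elementary averaging fact. Set $\omega_n := \log m_n$. Condition \eqref{Amadeus} says exactly that the forward differences $d_i := \omega_{i+1}-\omega_i$ form a non-decreasing sequence, while the initial conditions \eqref{Rocky} give $\omega_0=\omega_1=0$, hence $d_0=0$ and therefore $d_i\ge 0$ for every $i$ (this is Proposition \ref{Social Network}, item \ref{Schindler's List}). Since $\omega_1=0$ we may write, for $j\ge 2$,
$$\frac{\omega_j}{j-1}=\frac{1}{j-1}\sum_{i=1}^{j-1}d_i,$$
which is the arithmetic mean of the first $j-1$ terms of a non-decreasing non-negative sequence; appending one further term $d_j\ge\max(d_1,\dots,d_{j-1})$ can only increase such a mean, so $j\mapsto \omega_j/(j-1)$ is non-decreasing on $\{2,3,\dots\}$. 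This is the single non-trivial ingredient, and it is precisely where the slightly strengthened log-convexity \eqref{Amadeus} together with the normalization $m_1=1$ is used: Proposition \ref{Social Network}, item \ref{Blade Runner}, only yields that $\omega_j/j$ is non-decreasing, which is not enough here.

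Granting this, assume first $n\ge 2$. Every index $\ell$ with $k_\ell>0$ satisfies $1\le\ell\le n$, and from $k_1+2k_2+\dots+nk_n=n$ one gets $k=\sum_{\ell}k_\ell\le\sum_{\ell}\ell k_\ell=n$. The monotonicity above (with the trivial case $\ell=1$, where $\omega_\ell=0$) therefore gives the uniform bound $\omega_\ell\le\frac{\ell-1}{n-1}\,\omega_n$ for $1\le\ell\le n$, and in particular $\omega_k\le\frac{k-1}{n-1}\,\omega_n$. Summing the contribution of $m_k$ and of the $k_\ell$ copies of $m_\ell$, and using $\sum_{\ell}\ell k_\ell=n$ and $\sum_{\ell}k_\ell=k$, we obtain
$$\omega_k+\sum_{\ell=1}^{n}k_\ell\,\omega_\ell\ \le\ \frac{\omega_n}{n-1}\Big[(k-1)+\sum_{\ell=1}^{n}k_\ell(\ell-1)\Big]\ =\ \frac{\omega_n}{n-1}\big[(k-1)+(n-k)\big]\ =\ \omega_n.$$
Exponentiating yields $m_k\prod_{\ell=1}^{n}m_\ell^{k_\ell}\le m_n$, as claimed. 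The remaining case $n=1$ is immediate: $k_1+2k_2+\dots=1$ forces $k_1=1$ and $k_\ell=0$ for $\ell\ge 2$, hence $k=1$, and the asserted inequality reads $m_1\cdot m_1\le m_1$, true with equality by \eqref{Rocky}.

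The only real obstacle is the averaging lemma of the first paragraph; once it is in place the rest is the bookkeeping enforced by the constraint $k_1+2k_2+\dots+nk_n=n$, and no further use of \eqref{Prisoners} or of the explicit examples is needed. (One could alternatively phrase the argument in terms of the partition of $n$ whose parts are $\ell$ with multiplicity $k_\ell$, combining the superadditivity $\omega_a+\omega_b\le\omega_{a+b}$ from Proposition \ref{Social Network}, item \ref{On the Waterfront}, with the same $\omega_j/(j-1)$ estimate, but the direct computation above is shorter.)
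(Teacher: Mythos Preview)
Your proof is correct. The paper itself does not supply a proof of this lemma; it simply records it as Proposition~4.4 of \cite{bierstone2004resolution} and uses it as a black box in Proposition~\ref{Modern Times}. Your argument is self-contained and captures exactly the point the paper alludes to after \eqref{Amadeus}: under the normalization $m_1=1$ the log-convexity gives not only that $\omega_j/j$ is non-decreasing (item~\ref{Blade Runner} of Proposition~\ref{Social Network}) but that the sharper quotient $\omega_j/(j-1)$ is, and this is precisely what makes the count $(k-1)+\sum_\ell k_\ell(\ell-1)=(k-1)+(n-k)=n-1$ close with equality. Your side remark that item~\ref{Blade Runner} alone would not suffice is correct and is the reason the paper imposes \eqref{Amadeus} on $m_n$ rather than on $M_n=m_n\cdot n!$.
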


\begin{Lem} \label{Lawrence of Arabia} (Lemma 1.4.1 of \cite{krantz2002primer})
For each positive integer $n$ and positive real number $R$, 
$$\ds \sum \ds \frac{k!}{k_{1}! \cdot  k_{2}! \cdot  \ldots \cdot k_{n}!} \cdot R^{k} = R \cdot (1+R)^{n-1} $$
\noindent  where $k = k_1 + k_2 + \ldots + k_n$ and the sum is taken over all $k_1, k_2, \ldots, k_n$ for which $k_1 + 2k_2 + \ldots + n k_n = n$. 
\end{Lem}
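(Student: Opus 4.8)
The plan is to reduce the identity to a standard binomial sum by a short counting argument. First I would note that $\frac{k!}{k_{1}!\cdots k_{n}!}$ is the multinomial coefficient $\binom{k}{k_{1},\ldots,k_{n}}$, so, collecting the terms on the left-hand side according to the common value $k = k_{1} + \cdots + k_{n}$, it suffices to prove that for each $1 \le k \le n$
$$\ds \sum_{\substack{k_1 + \cdots + k_n = k \\ k_1 + 2k_2 + \cdots + nk_n = n}} \binom{k}{k_1,\ldots,k_n} = \binom{n-1}{k-1},$$
for then the left-hand side of the lemma becomes $\sum_{k=1}^{n} \binom{n-1}{k-1} R^{k} = R\sum_{j=0}^{n-1}\binom{n-1}{j}R^{j} = R(1+R)^{n-1}$ by the binomial theorem.

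To prove the displayed identity I would apply the multinomial theorem with the substitution $x_{j} = t^{j}$: the coefficient of $t^{n}$ in $(t + t^{2} + \cdots + t^{n})^{k}$ is exactly $\sum \binom{k}{k_1,\ldots,k_n}$, the sum ranging over all $(k_{1},\ldots,k_{n})$ with $\sum_{\ell} k_{\ell} = k$ and $\sum_{\ell} \ell k_{\ell} = n$. Since only monomials of degree at most $n$ matter, one may replace the polynomial by the geometric series and write $(t + t^{2} + \cdots)^{k} = t^{k}(1-t)^{-k}$; the coefficient of $t^{n}$ here equals the coefficient of $t^{n-k}$ in $(1-t)^{-k}$, which is $\binom{(n-k)+(k-1)}{k-1} = \binom{n-1}{k-1}$. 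Equivalently, and without generating functions, $\binom{n-1}{k-1}$ counts the compositions of $n$ into $k$ positive parts, and grouping these by the multiplicities $k_{j}$ of the part value $j$ produces exactly $\binom{k}{k_1,\ldots,k_n}$ compositions for each admissible vector.

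There is no serious obstacle in this argument; the only points needing a little care are the reindexing by $k$ and the remark that truncating the geometric series does not affect the coefficient of $t^{n}$. A direct induction on $n$ (splitting off either the largest part or a unit part) is also possible but is messier, so I would prefer the route above.
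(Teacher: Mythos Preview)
Your argument is correct. The grouping by $k$, the generating-function identification of the inner sum with the coefficient of $t^{n}$ in $(t+t^{2}+\cdots+t^{n})^{k}$, the passage to $t^{k}(1-t)^{-k}$ (legitimate because $n-k\le n-1$, so truncating each geometric factor at $t^{n-1}$ does not affect the coefficient of $t^{n-k}$), and the final binomial summation all check out.

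As for comparison: the paper does not actually prove this lemma. It is stated with a citation to Lemma~1.4.1 of Krantz--Parks and used as a black box in the proof of Proposition~\ref{Modern Times}. So your write-up is not competing with an argument in the paper; it simply supplies what the paper outsources. The combinatorial aside (compositions of $n$ into $k$ positive parts) is a nice alternative justification of the same inner identity and could replace the generating-function step if you prefer a proof free of formal power series.
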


Since a periodic function has periodic primitive if and only if it has null average, the key step will be to take the primitive of each $a_{j}$ minus its average and to sum all of them. 

\begin{Def}
Let $a_{j}(t_{j}) \in \E_{\M}(\T^N)$ for $j = 1, 2, \ldots, N$ and $a_{j_0}$ its respective average.  We define  
\begin{equation} \label{Bonnie and Clyde}
A: \T^{N} \to \R; \ \ \  A(t) = \ds \sum_{j=1}^{N} \left[\ds \int_{0}^{t_{j}} a_{j}(s)  ds - a_{j_{0}} \cdot t_{j}  \right].
\end{equation}
\end{Def}

\begin{Pro} \label{Modern Times}
Let $A$ be as in  \eqref{Bonnie and Clyde};  for every $\varepsilon > 0$, there exist $C_{\varepsilon}, h_{\varepsilon} > 0$ such that
$$\ds \inf_{p \in \N_{0}} \left( \ds \frac{m_{p} \cdot p!}{\varepsilon^{p} \cdot (1+|\eta|)^{p} } \right) \cdot \left|\partial_{t}^{\alpha} e^{i \eta A(t)} \right| \leq C_{\varepsilon} \cdot h_{\varepsilon}^{|\alpha|} \cdot m_{|\alpha|} \cdot |\alpha|!, \ \ \ \ \ \forall t \in \T^N, \ \forall \eta \in \Z, \ \forall \alpha \in \N_{0}^{N}.$$ 
\end{Pro}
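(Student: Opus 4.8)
The plan is to estimate $\partial_t^\alpha e^{i\eta A(t)}$ via Fa\`a di Bruno's formula applied to each partial differential monomial, exploiting that $A$ itself lies in $\E_\M(\T^N)$, and then to absorb the power of $|\eta|$ coming from the chain rule against the factor $\inf_p (m_p\,p!)/(\varepsilon^p(1+|\eta|)^p)$. First I would record that since each $a_j \in \E_\M(\T)$, the primitive $\int_0^{t_j} a_j(s)\,ds$ is again in $\E_\M(\T)$ (the classes are closed under antiderivative, as follows from \ref{The Graduate}), and hence $A \in \E_\M(\T^N)$; fix constants $C_0, h_0 > 1$ with $|\partial_t^\beta A(t)| \le C_0\, h_0^{|\beta|} m_{|\beta|}|\beta|!$ for all $\beta$, $t$. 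Note also $\partial_{t_j} A$ depends only on $t_j$, so a derivative $\partial_t^\alpha$ hitting $A$ via a given variable really is a one-variable chain-rule computation in each coordinate; writing $\alpha = (\alpha_1,\ldots,\alpha_N)$, one has $\partial_t^\alpha e^{i\eta A(t)}$ expressed through products of $\partial_{t_j}^{r} A$ over $j$, which keeps the combinatorics manageable.

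Next I would apply Fa\`a di Bruno: $\partial_t^\alpha e^{i\eta A}$ is a sum, over partitions recorded by nonnegative integers $k_1,\ldots,k_n$ (with $n = |\alpha|$) satisfying $k_1 + 2k_2 + \cdots + nk_n = n$, of terms of the form
$$
\frac{n!}{k_1!\cdots k_n!}\, (i\eta)^{k}\, e^{i\eta A(t)} \prod_{\ell=1}^{n}\left(\frac{\partial_t^{\ell}A(t)}{\ell!}\right)^{k_\ell},
$$
where $k = k_1 + \cdots + k_n$ (here $\partial_t^\ell A$ abbreviates the relevant $\ell$-th order derivative; the multivariable bookkeeping only improves the constants by a factor $N^n$). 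Using the bound on the derivatives of $A$ and then Lemma \ref{Casablanca} (which gives $m_k \prod_\ell (m_\ell)^{k_\ell} \le m_n$, hence $\prod_\ell (m_\ell)^{k_\ell} \le m_n / m_k \le m_n$ since $m_k \ge 1$), each term is bounded by a constant times $|\eta|^k\, C_0^{k}\, h_0^{n}\, m_n \cdot \frac{n!}{k_1!\cdots k_n!}$. Summing over all admissible $(k_1,\ldots,k_n)$ with Lemma \ref{Lawrence of Arabia} (taking $R = C_0|\eta|$, say — more precisely one groups by the value of $k$ and uses the identity, or simply bounds $R\,(1+R)^{n-1} \le (1+C_0|\eta|)^n$) yields
$$
\left|\partial_t^\alpha e^{i\eta A(t)}\right| \le C_1\, (h_1)^{|\alpha|}\, m_{|\alpha|}\, |\alpha|! \,(1+|\eta|)^{|\alpha|}
$$
for suitable $C_1, h_1 > 0$ independent of $\eta$ and $\alpha$.

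Finally I would multiply by the infimum factor. For the given $\varepsilon > 0$, choosing $p = |\alpha|$ inside $\inf_p$ gives
$$
\inf_{p\in\N_0}\left(\frac{m_p\,p!}{\varepsilon^p(1+|\eta|)^p}\right) \le \frac{m_{|\alpha|}\,|\alpha|!}{\varepsilon^{|\alpha|}(1+|\eta|)^{|\alpha|}}
$$
is the \emph{wrong} direction, so instead I note the infimum is at most $1$ (take $p=0$) and, more usefully, that the product telescopes: the infimum factor times $(1+|\eta|)^{|\alpha|}$ is at most $m_{|\alpha|}\,|\alpha|!/\varepsilon^{|\alpha|}$, again by choosing $p = |\alpha|$. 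Hence
$$
\inf_{p\in\N_0}\left(\frac{m_p\,p!}{\varepsilon^p(1+|\eta|)^p}\right)\cdot \left|\partial_t^\alpha e^{i\eta A(t)}\right| \le C_1\, h_1^{|\alpha|}\, m_{|\alpha|}\,|\alpha|! \cdot \frac{m_{|\alpha|}\,|\alpha|!}{\varepsilon^{|\alpha|}}.
$$
This still carries an extra $m_{|\alpha|}|\alpha|!$, which is not allowed; the fix is to be less wasteful in the chain-rule step. The right move is: the infimum factor is genuinely small when $|\eta|$ is large, and one should match the \emph{number of derivatives on $A$} (that is, $k$, not $|\alpha|$) against it — since in each Fa\`a di Bruno term the power of $|\eta|$ is exactly $k \le |\alpha|$, choosing $p = k$ in the infimum gives exactly the cancellation $|\eta|^k \cdot \inf_p(\cdots) \le m_k\,k!/\varepsilon^k$, and then $m_k \cdot m_n/m_k \le m_n$ from Lemma \ref{Casablanca} leaves only $m_{|\alpha|}$. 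Carrying this through, summing with Lemma \ref{Lawrence of Arabia} (now with $R = C_0/\varepsilon$), and setting $h_\varepsilon = h_0\cdot(1 + C_0/\varepsilon)\cdot N$ and $C_\varepsilon = C_0\cdot(1+C_0/\varepsilon)$ gives the claim.

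\textbf{Main obstacle.} The crux is the bookkeeping in the previous paragraph: one must track that in every Fa\`a di Bruno term the power of $\eta$ equals the number $k$ of ``inner'' derivative factors, feed precisely that $k$ into the infimum to cancel $|\eta|^k$, and simultaneously apply Lemma \ref{Casablanca} with that same $k$ so the product of $m_\ell$'s collapses to $m_{|\alpha|}$ without any residual growth in $|\alpha|$. Getting the two roles of $k$ to line up — and then seeing that Lemma \ref{Lawrence of Arabia} sums the resulting multinomial coefficients to something of the form $C\,(1 + C/\varepsilon)^{|\alpha|}$ — is where all the care goes; the rest is routine.
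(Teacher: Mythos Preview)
Your core mechanism --- Fa\`a di Bruno, then for each term choose $p=k$ in the infimum so that $|\eta|^k \cdot \inf_p(\cdots) \le m_k k!/\varepsilon^k$, then collapse $m_k \prod_\ell m_\ell^{k_\ell}$ to $m_{|\alpha|}$ via Lemma~\ref{Casablanca}, and finally sum with Lemma~\ref{Lawrence of Arabia} taking $R = C_0/\varepsilon$ --- is exactly the paper's engine, and your identification of the ``main obstacle'' (matching the two roles of $k$) is precisely the point.

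Where you diverge is in the treatment of the $N$ variables. The paper exploits the separation $A(t)=\sum_j A_j(t_j)$ to write $e^{i\eta A}=\prod_j e^{i\eta A_j(t_j)}$ and $\partial_t^\alpha e^{i\eta A}=\prod_j \partial_{t_j}^{\alpha_j} e^{i\eta A_j}$, runs the single-variable argument on each factor with a parameter $\sigma$ in place of $\varepsilon$, and then faces the problem that the product needs $[\inf_p(\cdots_\sigma)]^N$ while only one copy of $\inf_p(\cdots_\varepsilon)$ is available. It resolves this with Lemma~\ref{Jaws} (iterated so that $[\inf(\sigma)]^{2^k}\ge \inf(H^k\sigma)$), choosing $\sigma=\varepsilon/H^k$ with $2^k\ge N$. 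Your route instead keeps the infimum intact and feeds the \emph{total} $\eta$-power $K=\sum_j k^{(j)}$ into it, which sidesteps Lemma~\ref{Jaws} altogether. That is a genuine simplification, but the phrase ``the multivariable bookkeeping only improves the constants by a factor $N^n$'' is doing real work you have not displayed: after choosing $p=K$ you obtain a factor $m_K\,K!$, and to apply Lemma~\ref{Casablanca} with the combined exponents $\tilde k_\ell=\sum_j k_\ell^{(j)}$ you must also control $K!$ against $\prod_j k^{(j)}!$ (e.g.\ via $K!\le N^{K}\prod_j k^{(j)}!$) so that the sum over product-partitions factors and Lemma~\ref{Lawrence of Arabia} applies in each coordinate. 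Once you write that step out, your argument is complete and arguably cleaner than the paper's; as written, it is a correct plan with one combinatorial gap left implicit.
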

\begin{proof}
By denoting, for each $j$, $A_{j}(t_{j}) = \ds \int_{0}^{t_{j}} a_{j}(s)  ds - a_{j_{0}} \cdot t_{j},$ we have $A(t) = \ds \sum_{j=1}^{N} A_{j}(t_{j})$. In this situation,
\begin{equation} \label{City Lights}
\left|\partial_{t}^{\alpha} e^{i \eta A(t)} \right| =  \ds \prod_{j=1}^{N} \left| \partial_{t_{j}}^{\alpha_{j}} \left( e^{i \eta \cdot  A_{j}(t_{j})} \right) \right|
\end{equation}
Because every $A_{j}(t_{j})$ belongs to $\E_{\M}(\T)$, one can find $C, h > 1$ satisfying
\begin{equation} \label{Raging Bull}
\left|\partial_{t_j}^{\ell} A_{j}(t_{j}) \right| \leq C \cdot h^{\ell} \cdot m_{\ell} \cdot \ell!, \ \ \ \  \forall t \in \T, \ \ \forall \ell \in \N_{0}, \ \ \forall j \in \left\{1, 2, \ldots, n \right\}. 
\end{equation}

For any $\alpha \in \N$, we define the following set:
$$\Delta(\alpha) = \left\{(k_{1}, k_{2}, \ldots, k_{\alpha}) \in \N_{0}^{\alpha}; \ \ k_{1} + 2 \cdot k_{2} + \ldots + \alpha \cdot k_{\alpha}  = \alpha \right\}.$$
Then, we apply  Fa\`a di Bruno's formula (see \cite{bierstone2004resolution} for a proof) associated to  \eqref{Raging Bull}:
\begin{align*}
\left|\partial_{t_{j}}^{\alpha_{j}} \left(e^{i \cdot \eta \cdot A_{j}(t_{j})} \right) \right| &= \left| \ds \sum_{\Delta(\alpha_{j})} \ds \frac{\alpha_{j} !}{k_{1}! \cdot k_{2} \cdot \ldots k_{\alpha_{j}}!} \cdot e^{i \cdot \eta \cdot A_{j}(t_{j})} \cdot \ds \prod_{\ell = 1}^{\alpha_{j}} \left[ \ds \frac{\partial_{t_{j}}^{\ell} (i  \cdot \eta \cdot A_{j}(t_{j}))}{\ell!} \right]^{k_{\ell}} \right|  \\
&\leq  \ds \sum_{\Delta(\alpha_{j})} \ds \frac{\alpha_{j} !}{k_{1}! \cdot k_{2} \cdot \ldots k_{\alpha_{j}}!} \cdot |\eta|^{k} \cdot \ds \prod_{\ell = 1}^{\alpha_{j}} \left[ \ds \frac{C \cdot h^{\ell} \cdot m_{\ell} \cdot \ell!}{\ell!} \right]^{k_{\ell}}, 
\end{align*}
for $k := \ds \sum_{\ell = 1}^{\alpha_{j}} k_{\ell}$. Given $\sigma > 0$, we now consider $(\star) := \ds \inf_{p \in \N_{0}} \left(\ds \frac{m_{p} \cdot p!}{\sigma^{p} \cdot (1+|\eta|)^{p}} \right) \cdot \left|\partial_{t}^{\alpha} e^{i \eta A(t)} \right|. $
So 
\begin{align*}
(\star) &\leq  \ds \sum_{\Delta(\alpha_{j})} \ds \frac{\alpha_{j} !}{k_{1}! \cdot k_{2} \cdot \ldots k_{\alpha_{j}}!} \cdot \left(\ds \frac{m_{k} \cdot k!}{\sigma^{k} \cdot (1+|\eta|)^{k}} \right) \cdot (1+|\eta|)^{k} \cdot  \ds \prod_{\ell = 1}^{\alpha_{j}} \left[C \cdot h^{\ell} \cdot m_{\ell} \right]^{k_{\ell}} \\
&\leq  \ds \sum_{\Delta(\alpha_{j})} \ds \frac{\alpha_{j} !}{k_{1}! \cdot k_{2} \cdot \ldots k_{\alpha_{j}}!} \cdot \left(\ds \frac{C^{k} \cdot h^{\alpha_{j}} \cdot  k!}{\sigma^{k} } \right) \cdot  \ds \prod_{\ell = 1}^{\alpha_{j}} m_{k} \cdot m_{\ell} ^{k_{\ell}}
\end{align*}

Applying Lemmas \ref{Casablanca} and \ref{Lawrence of Arabia}, we infer that
\begin{align*}
(\star) &\leq  \ds \sum_{\Delta(\alpha_{j})} \ds \frac{\alpha_{j} !}{k_{1}! \cdot k_{2} \cdot \ldots k_{\alpha_{j}}!} \cdot \left(\ds \frac{C^{k} \cdot h^{\alpha_{j}} \cdot k!}{\sigma^{k} } \right) \cdot m_{\alpha_{j}} \\
&\leq (h^{\alpha_{j}} \cdot m_{\alpha_{j}} \cdot \alpha_{j}!) \cdot \left[\ds \sum_{\Delta(\alpha_{j})} \ds \frac{k!}{k_{1}! \cdot k_{2} \cdot \ldots k_{\alpha_{j}}!} \cdot \left(\ds \frac{C}{\sigma} \right)^{k} \right] \\
&\leq \ds \frac{C}{\sigma} \cdot \left[\left(1 + \ds \frac{C}{\sigma} \right) \cdot h \right]^{\alpha_{j}} \cdot m_{\alpha_{j}} \cdot \alpha_{j}!.
\end{align*}

\noindent By taking $C_{\sigma} = \ds \frac{C}{\sigma}$ and $h_{\sigma} = \left[\left(1 + \ds \frac{C}{\sigma} \right) \cdot h \right]$, we conclude that

\begin{equation} \label{The Age of Innocence}
\ds \inf_{p \in \N_{0}} \left(\ds \frac{m_{p} \cdot p!}{\sigma^{p} \cdot (1+|\eta|)^{p}} \right) \cdot \left|\partial_{t}^{\alpha_{j}} e^{i \eta A_{j}(t_{j})} \right| \leq  C_{\sigma} \cdot h_{\sigma}^{\alpha_{j}} \cdot m_{\alpha_{j}} \cdot \alpha_{j}!, \ \ \ j \in \left\{1, 2, \ldots , N \right\}.
\end{equation}

Using Lemma \ref{Jaws}, one can prove by induction  that 

\begin{equation} \label{Atonement} 
\left[\ds \sup_{n \in \N_{0}} \left( \ds \frac{\rho^{n}}{m_{n} \cdot n!} \right) \right]^{2^{k}} \leq \ds \sup_{n \in \N_{0}} \left( \ds \frac{\rho^{n} \cdot (H^{k})^n}{m_{n} \cdot n!} \right), \ \ \ \forall \rho > 0, \ \ \forall k \in \N_{0}.  
\end{equation}
So we choose $k \in \N$ such that  $2^{k} \geq N$ and $\sigma = \ds \frac{\varepsilon}{H^{k}}$. Using \eqref{City Lights} and \eqref{The Age of Innocence},  we deduce that

\begin{equation} \label{One Million Dollar Baby}
\ds \inf_{p \in \N_{0}} \left(\ds \frac{m_{p} \cdot p!}{\sigma^{p} \cdot (1+|\eta|)^{p}} \right)^{N} \cdot \left|\partial_{t}^{\alpha} e^{i \eta A(t)} \right| \leq  C_{\sigma}^{N} \cdot h_{\sigma}^{|\alpha|} \cdot m_{|\alpha|} \cdot |\alpha|!. 
\end{equation}

On the other hand, since $2^{k} \geq N$, we use \eqref{Atonement} in order to obtain 

$$\ds \inf_{p \in \N_{0}} \left(\ds \frac{m_{p} \cdot p!}{\sigma^{p} \cdot (1+|\eta|)^{p}} \right)^{N} \geq \ds \inf_{p \in \N_{0}} \left(\ds \frac{m_{p} \cdot p!}{\varepsilon^{p} \cdot (1+|\eta|)^{p}} \right).$$

\noindent Hence, by associating \eqref{One Million Dollar Baby} to the inequality above and taking $C_{\varepsilon} = C_{\sigma}^N, \ h_{\varepsilon} = h_{\sigma}$, we have

$$\ds \inf_{p \in \N_{0}} \left(\ds \frac{m_{p} \cdot p!}{\varepsilon^{p} \cdot (1+|\eta|)^{p}} \right) \cdot \left|\partial_{t}^{\alpha} e^{i \eta A(t)} \right| \leq  C_{\varepsilon}^{N} \cdot h_{\varepsilon}^{|\alpha|} \cdot m_{|\alpha|} \cdot |\alpha|!,$$

\noindent as we intended to prove. 
\end{proof}

\begin{Teo} \label{Butch Cassidy and the Sundance Kid}
Let $T: \D_{\M}'(\T^{N+1}) \to \D_{\M}'(\T^{N+1})$ be the operator given by 
$$T \left(\ds \sum_{\eta \in \Z} \widehat{u}(t, \eta) e^{i x \eta} \right) = \ds \sum_{\eta \in \Z} \widehat{u}(t, \eta) e^{i (A(t) + x) \eta}. $$
Then $T$ is an automorphism. Moreover, the same holds for $T \big{|}_{\E_{\M}(\T^{N+1})}$.
\end{Teo}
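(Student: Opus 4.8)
The plan rests on the factorization $e^{i\eta(A(t)+x)}=e^{i\eta A(t)}\cdot e^{i\eta x}$: in terms of the partial Fourier series of Theorem \ref{The Royal Tenembauns}, $T$ simply multiplies each partial coefficient $\widehat u(t,\eta)$ by the function $e^{i\eta A(t)}\in\E_{\M}(\T^{N})$, so that $\widehat{Tu}(t,\eta)=e^{i\eta A(t)}\widehat u(t,\eta)$. Since $e^{i\eta A(t)}\cdot e^{-i\eta A(t)}=1$, the natural candidate for $T^{-1}$ is the operator $T'$ given by the same recipe with $A$ replaced by $-A$; note that $-A$ is again of the form \eqref{Bonnie and Clyde}, built from the real-valued functions $-a_{j}\in\E_{\M}(\T)$ (with averages $-a_{j_{0}}$), so every estimate proved for $A$ — in particular Proposition \ref{Modern Times} — holds verbatim for $-A$. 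It therefore suffices to show that $T$ is a well-defined continuous injection of $\D_{\M}'(\T^{N+1})$ into itself which maps $\E_{\M}(\T^{N+1})$ continuously into itself; applying this to $T'$ as well and checking $T'\circ T=T\circ T'=\mathrm{id}$ on Fourier coefficients (using the uniqueness part of Theorem \ref{The Royal Tenembauns}) then shows that $T$ and $T|_{\E_{\M}}$ are automorphisms.

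To see that $T$ is well defined on $\D_{\M}'(\T^{N+1})$, fix $u$ and set $u_{\eta}:=e^{i\eta A(t)}\widehat u(t,\eta)\in\D_{\M}'(\T^{N})$, the product of an ultradistribution by an $\E_{\M}$ function. I would verify the hypothesis of Theorem \ref{A Clockwork Orange} for the sequence $\{u_{\eta}\}$. Writing $\langle u_{\eta},\psi\rangle=\langle\widehat u(t,\eta),e^{i\eta A(t)}\psi\rangle$, a Leibniz expansion together with Proposition \ref{Modern Times}, logarithmic convexity \eqref{Prisoners} and the inequality $m_{j}m_{k}\le m_{j+k}$ (Proposition \ref{Social Network}) gives, for any $\sigma>0$ and suitable $h'=h'(\sigma,h)$,
$$\|e^{i\eta A(t)}\psi\|_{\M,h'}\ \le\ C_{\sigma}\,\|\psi\|_{\M,h}\cdot\sup_{p\in\N_{0}}\Big(\frac{\sigma^{p}(1+|\eta|)^{p}}{m_{p}\cdot p!}\Big);$$
combining this with the dual estimate \eqref{Zero Dark Thirty} for $\widehat u(t,\eta)$ applied to $e^{i\eta A(t)}\psi$ produces a product of two suprema in $(1+|\eta|)$, and choosing both free parameters equal to $\varepsilon/H$ and collapsing the resulting square by Lemma \ref{Jaws} yields a bound of the shape $C\,\|\psi\|_{\M,h}\sup_{p}\big(\varepsilon^{p}(1+|\eta|)^{p}/(m_{p}p!)\big)$. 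Thus $Tu:=\sum_{\eta}u_{\eta}e^{ix\eta}$ lies in $\D_{\M}'(\T^{N+1})$ and has $\widehat{Tu}(t,\eta)=e^{i\eta A(t)}\widehat u(t,\eta)$.

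Injectivity is immediate: $Tu=0$ forces $e^{i\eta A(t)}\widehat u(t,\eta)=0$, hence $\widehat u(t,\eta)=0$ for all $\eta$ (multiply by $e^{-i\eta A(t)}\in\E_{\M}(\T^{N})$), hence $u=0$. For continuity I would use the identity $\langle Tu,\varphi\rangle=\langle u,\varphi(t,x-A(t))\rangle$, obtained from the definition of $T$ by the periodic change of variables $x\mapsto x-A(t)$ inside the pairing of Theorem \ref{The Royal Tenembauns}; since $A\in\E_{\M}(\T^{N})$ — a periodic primitive of an $\E_{\M}(\T)$ function lies in $\E_{\M}(\T)$, because $\M$ is non-decreasing — and $\E_{\M}$ is closed under composition, $\varphi(t,x-A(t))$ is a fixed element of $\E_{\M}(\T^{N+1})$, so $u_{n}\to u$ in $\D_{\M}'(\T^{N+1})$ gives $\langle Tu_{n},\varphi\rangle\to\langle Tu,\varphi\rangle$. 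For the restriction to $\E_{\M}(\T^{N+1})$: by Theorem \ref{Children of Men} the coefficients of a $\varphi\in\E_{\M}(\T^{N+1})$ satisfy $|\partial_{t}^{\alpha}\widehat\varphi(t,\eta)|\le C\,b^{|\alpha|}m_{|\alpha|}|\alpha|!\,\inf_{q}\big(m_{q}q!/(\delta^{q}(1+|\eta|)^{q})\big)$ for fixed $C,b,\delta>0$, and Leibniz together with Proposition \ref{Modern Times} again leads to a product $\sup_{p}\big(\sigma^{p}(1+|\eta|)^{p}/(m_{p}p!)\big)\cdot\inf_{q}\big(m_{q}q!/(\delta^{q}(1+|\eta|)^{q})\big)$. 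Here I would split the decay of $\widehat\varphi$, via Lemma \ref{Jaws} (or its iterate \eqref{Atonement}), into a factor that absorbs the growth of $e^{i\eta A(t)}$ as soon as $\sigma$ is small enough (for instance $\sigma\le\delta/H$) times a genuine residual factor $\inf_{q}\big(m_{q}q!/((\delta/H)^{q}(1+|\eta|)^{q})\big)$; this is precisely the $\inf$-type estimate demanded by Theorem \ref{Brooklyn}, so $T\varphi\in\E_{\M}(\T^{N+1})$. Since the constants produced depend only on the Banach step $\E_{\M,h}(\T^{N+1})$ containing $\varphi$, $T$ is continuous on the DFS space $\E_{\M}(\T^{N+1})$.

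I expect the main obstacle to be exactly this tension between the super-polynomial decay in $\eta$ of the partial Fourier coefficients and the super-polynomial \emph{growth} in $\eta$ of $e^{i\eta A(t)}$: one must dominate the growth using only a portion of the available decay while retaining a genuinely decaying remainder, which is what the free parameter in Proposition \ref{Modern Times} — combined with the doubling inequality of Lemma \ref{Jaws} (resp. \eqref{Atonement}) — is designed to do. The remaining ingredients (the change of variables in the pairing, closedness of $\E_{\M}$ under composition, and uniqueness of partial Fourier coefficients) are routine.
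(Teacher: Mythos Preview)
Your proposal is correct and follows essentially the same route as the paper: verify the hypothesis of Theorem \ref{A Clockwork Orange} for the coefficients $e^{i\eta A(t)}\widehat u(t,\eta)$ via Leibniz and Proposition \ref{Modern Times}, collapse the resulting product of two $\sup$'s using Lemma \ref{Jaws}, then repeat with Theorems \ref{Children of Men}--\ref{Brooklyn} for the $\E_{\M}$ case, and finally observe that $T'$ (built from $-A$) is the inverse. The only differences are cosmetic: the paper feeds $e^{i\eta A(t)}\varphi(t)e^{-ix\eta}$ directly into the estimate \eqref{Minority Report} of Theorem \ref{The Hurt Locker} rather than invoking \eqref{Zero Dark Thirty}, and it does not discuss continuity at all (``automorphism'' is used there in the sense of a linear bijection). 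Your change-of-variables identity $\langle Tu,\varphi\rangle=\langle u,\varphi(t,x-A(t))\rangle$ is a pleasant bonus that the paper omits; note only that you mislabel \eqref{Prisoners} as ``logarithmic convexity'' --- that reference is the moderate-growth condition.
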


\begin{proof}
We verify first that $T$ is well defined. By Theorem \ref{A Clockwork Orange}, given $\varepsilon, h > 0$, one has to find $C_{\varepsilon, h} > 0$ such that 
$$ \left|\<\widehat{u}(t, \eta) e^{i \eta A(t)}, \psi \> \right| \leq C_{\varepsilon, h} \cdot \left\|\psi \right\|_{\M, h} \cdot \ds \sup_{n \in \N_{0}} \left( \ds \frac{\varepsilon^{n} \cdot (1+|\eta|)^{n}}{m_{n} \cdot n!} \right), \ \ \forall \eta \in \Z, \ \ \forall \psi \in \E_{\M, h}(\T^{N}). $$

Let us fix $\varphi \in \E_{\M, h}(\T^{N})$; for $\delta > 0$ that will be chosen later, we denote

$$(\triangle) = \ds \inf_{p \in \N_{0}} \left(\ds \frac{m_{p} \cdot p!}{\delta^{p} \cdot (1+|\eta|)^{p}} \right) \cdot \left |\partial_{t}^{\alpha} \left(e^{i \eta A(t)} \cdot \varphi(t) \right) \right|.$$

\noindent By Proposition \ref{Modern Times}, 
\begin{align*}
(\triangle)  
&\leq \ds \sum_{\beta \leq \alpha} \binom {\alpha} {\beta} \left( C_{\delta} \cdot h_{\delta}^{|\beta|} \cdot m_{|\beta|} \cdot |\beta|! \right) \cdot \left(\left\|\varphi \right\|_{\M, h} \cdot h^{|\alpha| - |\beta|} \cdot m_{|\alpha| - |\beta|} \cdot (|\alpha| - |\beta|)! \right) \\
&\leq C_{\delta} \cdot \left\|\varphi \right\|_{\M, h} \cdot h_{\delta}'^{|\alpha|} \cdot m_{|\alpha|} \cdot |\alpha|!,
\end{align*}
where $h'_{\delta} = 2 \cdot \max \left\{h_{\delta}, h \right\}$. Therefore
\begin{equation} \label{Spartacus}
\ds \inf_{p \in \N_{0}} \left(\ds \frac{m_{p} \cdot p!}{\delta^{p} \cdot (1+|\eta|)^{p}} \right) \cdot \left |  \partial_{t}^{\alpha} \left(e^{i \eta A(t)} \cdot \varphi(t) \right) \right| \leq C_{\delta} \cdot \left\|\varphi \right\|_{\M, h} \cdot h_{\delta}'^{|\alpha|} \cdot m_{\alpha} \cdot \alpha!.
\end{equation}

On the other hand, for every $\rho > 0$, one can find $C_{\rho} > 0$ satisfying
$$\left|\<u, f \> \right| \leq C_\rho \cdot \underset{\gamma \in \N_{0}^{N+1}}{\ds \sup_{{x \in \T^{N+1}}}}  \left(\ds \frac{|\partial^\gamma f (x)| \cdot \rho^{|\gamma|}} {m_{|\gamma|} \cdot |\gamma|!} \right), \ \ \ \ \ \forall f \in \E_\M(\T^{N+1}), $$
by Theorem \ref{The Hurt Locker}. So
\begin{align} \label{Jerry Maguire}
\left|\<\widehat{u}(t, \eta) e^{i \eta A(t)}, \varphi(t) \> \right| &\leq  \ds \frac{C_\rho}{2 \pi} \cdot \underset{(\gamma_{1}, \gamma_{2}) \in \N_{0}^{N+1}}{\ds \sup_{{(t,x) \in \T^{N+1}}}}  \ds \frac{| \partial_{t}^{\gamma_{1}} \left(e^{i \eta A(t) } \varphi(t) \right)| \cdot |\partial_{x}^{\gamma_{2}} \left(e^{-i x \eta} \right)| \cdot \rho^{|\gamma_{1}| + |\gamma_{2}|}} {m_{|\gamma_{1}| + |\gamma_{2}|} \cdot (|\gamma_{1}| + |\gamma_{2}|)!} \nonumber\\ 
&\leq  \ds \frac{C_\rho}{2 \pi} \cdot \underset{(\gamma_{1}, \gamma_{2}) \in \N_{0}^{N+1}}{\ds \sup_{{(t,x) \in \T^{N+1}}}}  \ds \frac{| \partial_{t}^{\gamma_{1}} \left(e^{i \eta A(t) } \varphi(t) \right)| \cdot (1+ |\eta|)^{|\gamma_{2}|} \cdot \rho^{|\gamma_{1}| + |\gamma_{2}|}} {m_{|\gamma_{1}| + |\gamma_{2}|} \cdot (|\gamma_{1}| + |\gamma_{2}|)!}.
\end{align}

We now denote $(\star) = | \partial_{t}^{\gamma_{1}} \left(e^{i \eta A(t) } \varphi(t) \right)| \cdot (1+ |\eta|)^{|\gamma_{2}|} \cdot \rho^{|\gamma_{1}| + |\gamma_{2}|}$.  By Lemma \ref{Jaws}, 
 
$$\left(\ds \sup_{n \in \N_{0}} \ds \frac{\varepsilon^{n} \cdot (1+|\eta|)^{n}}{H^{n} \cdot m_{n} \cdot n!} \right)^{2} \leq \left(\ds \sup_{n \in \N_{0}} \ds \frac{\varepsilon^{n} \cdot (1+|\eta|)^{n}}{m_{n} \cdot n!} \right).$$
 
\noindent Then, if $\delta = \ds \frac{\varepsilon}{H}$, 
\begin{align*}
(\star) &\leq \left|\partial_{t}^{\gamma_{1}} \left(e^{i A(t) \eta} \varphi(t) \right)\right| \cdot  \ds \inf_{p \in \N_{0}} \left(\ds \frac{m_{p} \cdot p!}{\delta^{p} \cdot (1+|\eta|)^{p}} \right)^{2} \cdot  (1+ |\eta|)^{|\gamma_{2}|} \cdot  \left(\ds \sup_{n \in \N_{0}} \ds \frac{\varepsilon^{n} \cdot (1+|\eta|)^{n}}{m_{n} \cdot n!} \right) \cdot \rho^{|\gamma_{1}| + |\gamma_{2}|}. 
\end{align*}

\noindent Applying \eqref{Spartacus}, we infer that
\begin{align*}
(\star) &\leq  C_{\delta} \cdot \left\|\varphi \right\|_{\M, h} \cdot h_{\delta}'^{|\gamma_{1}|} \cdot m_{|\gamma_{1}|} \cdot |\gamma_{1}|! \cdot  \left(\ds \frac{m_{|\gamma_{2}|} \cdot |\gamma_{2}|! \cdot H^{|\gamma_{2}|}}{\varepsilon^{|\gamma_{2}|}}  \right) \cdot \left(\ds \sup_{n \in \N_{0}} \ds \frac{\varepsilon^{n} \cdot (1+|\eta|)^{n}}{m_{n} \cdot n!} \right) \cdot  \rho^{|\gamma_{1}| + |\gamma_{2}|}. 
\end{align*}

Because we may suppose $\varepsilon < H$, 
$$(\star) \leq C_{\delta} \cdot  \left\|\varphi \right\|_{\M, h} \cdot \left(\ds \frac{h_{\delta}' \cdot H \cdot \rho}{\varepsilon} \right)^{|\gamma_{1}| + |\gamma_{2}|} \cdot m_{|\gamma_{1}| + |\gamma_{2}|} \cdot (|\gamma_{1}| + |\gamma_{2}|)! \cdot  \left(\ds \sup_{n \in \N_{0}} \ds \frac{\varepsilon^{n} \cdot (1+|\eta|)^{n}}{m_{n} \cdot n!} \right).$$

\noindent Since the right-hand side of the inequality above does not depend on $(t,x)$, by taking $\rho = \ds \frac {\varepsilon}{h_{\delta}' \cdot H}$ and applying \eqref{Jerry Maguire}, we obtain that 

\begin{equation} \label{The Sixth Sense}
\left|\<\widehat{u}(t, \eta) e^{i \eta A(t)}, \varphi(t) \> \right| \leq  \ds \frac{C_\rho}{2 \pi} \cdot C_{\delta} \cdot  \left\|\varphi \right\|_{\M, h} \cdot \left(\ds \sup_{n \in \N_{0}} \ds \frac{\varepsilon^{n} \cdot (1+|\eta|)^{n}}{m_{n} \cdot n!} \right).
\end{equation}

\noindent Observe  that both $\rho$ and $\delta$ depend only on $\varepsilon$ and $h$, and thus the first part of the proof is completed.

Now let $\psi \in \E_{\M}(\T^{N+1})$; by Theorem \ref{Children of Men}, there exist $C, h, \delta > 0$ such that
$$|\partial_{t}^{\beta}\widehat{\psi}(t, \eta) | \leq C \cdot h^{|\beta|} \cdot m_{|\beta|} \cdot |\beta|! \cdot \left(\ds \inf_{p \in \N_{0}} \ds \frac{m_{p} \cdot p!}{\delta^{p} \cdot (1+|\eta|)^{p}} \right), \ \ \ \forall t \in \T, \ \ \forall \beta \in \N_{0}^{N}.$$
Hence
\begin{align} \label{Back to the Future}
\left| \partial_{t}^{\alpha} \left(\widehat{\psi}(t, \eta) \cdot e^{i A(t) \eta} \right) \right| &\leq \ds \sum_{\beta \leq \alpha} \binom {\alpha} {\beta} \left|\partial_{t}^{\beta}\widehat{\psi}(t, \eta)  \right| \cdot \left|\partial_{t}^{\alpha - \beta} (e^{i A(t) \eta})  \right| \nonumber \\
& \leq \ds \sum_{\beta \leq \alpha} \binom {\alpha} {\beta} C \cdot h^{|\beta|} \cdot m_{|\beta|} \cdot |\beta|! \cdot \left(\ds \inf_{p \in \N_{0}} \ds \frac{m_{p} \cdot p!}{\delta^{p} \cdot (1+|\eta|)^{p}} \right) \cdot \left|\partial_{t}^{\alpha - \beta} (e^{i A(t) \eta})  \right|
\end{align}

Applying once again Lemma \ref{Jaws} and \eqref{Spartacus}, we have
\begin{align} \label{Back to the Future II}
 \left(\ds \inf_{p \in \N_{0}} \ds \frac{m_{p} \cdot p!}{\delta^{p} \cdot (1+|\eta|)^{p}} \right) \left|\partial_{t}^{\alpha - \beta} (e^{i A(t) \eta})  \right| &\leq \left(\ds \inf_{p \in \N_{0}} \ds \frac{H^{p} \cdot m_{p} \cdot p!}{\delta^{p} \cdot (1+|\eta|)^{p}}\right)^{2} \cdot \left|\partial_{t}^{\alpha - \beta} (e^{i A(t) \eta})  \right| \nonumber \\
&\leq \left(\ds \inf_{p \in \N_{0}} \ds \frac{H^{p} \cdot m_{p} \cdot p!}{\delta^{p} \cdot (1+|\eta|)^{p}}\right) \cdot C_{1} \cdot h_{1}^{|\alpha| - |\beta|} \cdot m_{|\alpha| - |\beta|} \cdot (|\alpha| - |\beta|)!.  
\end{align}

\noindent Associating \eqref{Back to the Future} to \eqref{Back to the Future II}, we deduce that 
\begin{align*}
\left| \partial_{t}^{\alpha} \left(\widehat{\psi}(t, \eta) \cdot e^{i A(t) \eta} \right) \right| &\leq \ds \sum_{\beta \leq \alpha} \binom {\alpha} {\beta} C h^{|\beta|}  m_{|\beta|}  |\beta|!  \left(\ds \inf_{p \in \N_{0}} \ds \frac{H^{p} \cdot m_{p} \cdot p!}{\delta^{p} \cdot (1+|\eta|)^{p}}\right)  C_{1}  h_{1}^{|\alpha| - |\beta|}  m_{|\alpha| - |\beta|}  (|\alpha| - |\beta|)!  \\
&\leq C_{2} \cdot h_{2}^{|\alpha|} \cdot m_{|\alpha|} \cdot |\alpha|! \cdot \left(\ds \inf_{p \in \N_{0}} \ds \frac{H^{p} \cdot m_{p} \cdot p!}{\delta^{p} \cdot (1+|\eta|)^{p}}\right),
\end{align*}

\noindent by putting $C_{2} = C \cdot C_{1}$ and $h_{2} = 2 \cdot \max \left\{h_{1}, h \right\}$. Therefore, it follows from Theorem \ref{Children of Men} that  $T(\psi) \in \E_{\M} (\T^N)$. 

To see that the operator is an automorphism,  is easy to check that the same properties proved for $T$ still hold for $T': \D_{\M}(\T^{N+1}) \to \D_{\M}(\T^{N+1})$ given by
$$T' \left(\ds \sum_{\eta \in \Z} \widehat{u}(t, \eta) e^{i x \eta} \right) = \ds \sum_{\eta \in \Z} \widehat{u}(t, \eta) e^{i (-A(t) + x) \eta}.$$

\noindent and that $T'$ is actually the inverse of $T$ in $\D_{\M}'(\T^{N+1})$ and $\E_{\M}(\T^{N+1})$. 
\end{proof}

\begin{Teo} \label{Patton}
The system of complex vector fields described in \eqref{It's a Wonderful Life} is globally $\M$-hypoelliptic if and only if the same is valid for the system defined in \eqref{2001} . 
\end{Teo}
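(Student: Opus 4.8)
The plan is to use the automorphism $T$ of Theorem \ref{Butch Cassidy and the Sundance Kid} to conjugate the two systems. First I would establish the intertwining identity
$$T \circ L_{j} = \tilde{L}_{j} \circ T, \qquad j = 1, 2, \ldots, N,$$
as operators on $\D_{\M}'(\T^{N+1})$ (and hence, by restriction, on $\E_{\M}(\T^{N+1})$). The verification is done mode by mode in the partial Fourier series in $x$. If $u = \sum_{\eta \in \Z} \widehat{u}(t,\eta) e^{i x \eta}$, then $L_{j}$ acts on the $\eta$-th mode as $\widehat{u}(t,\eta) \mapsto \partial_{t_{j}} \widehat{u}(t,\eta) + i\eta\, a_{j}(t_{j})\, \widehat{u}(t,\eta)$, using that $\widehat{\partial_{t_{j}} v}(t,\eta) = \partial_{t_{j}} \widehat{v}(t,\eta)$, that $\widehat{\partial_{x} v}(t,\eta) = i\eta\, \widehat{v}(t,\eta)$, and that multiplication by the $x$-independent function $a_{j}(t_{j}) \in \E_{\M}(\T^{N+1})$ commutes with taking the $\eta$-th coefficient (here one uses that $\E_{\M}$ is an algebra, so $a_{j} u$ is a well-defined ultradistribution). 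By definition of $T$ we have $\widehat{Tu}(t,\eta) = \widehat{u}(t,\eta)\, e^{i\eta A(t)}$, and $\partial_{t_{j}} A(t) = a_{j}(t_{j}) - a_{j_{0}}$ by \eqref{Bonnie and Clyde}; so the Leibniz rule gives, for the $\eta$-th coefficient of $\tilde{L}_{j}(Tu)$,
$$\partial_{t_{j}}\!\left(e^{i\eta A(t)} \widehat{u}(t,\eta)\right) + i\eta\, a_{j_{0}}\, e^{i\eta A(t)} \widehat{u}(t,\eta) = e^{i\eta A(t)}\left[\partial_{t_{j}} \widehat{u}(t,\eta) + i\eta\, a_{j}(t_{j})\, \widehat{u}(t,\eta)\right],$$
which is exactly $e^{i\eta A(t)}$ times the $\eta$-th coefficient of $L_{j} u$, i.e. the $\eta$-th coefficient of $T(L_{j}u)$. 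The one point requiring care is that $e^{i\eta A(t)} \in \E_{\M}(\T^{N})$ with suitable estimates, which is precisely Proposition \ref{Modern Times}; this guarantees the manipulations above stay inside the framework of Section \ref{The Phantom Menace}.

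Granting the identity, the theorem follows at once. Suppose the system \eqref{2001} is globally $\M$-hypoelliptic and let $u \in \D_{\M}'(\T^{N+1})$ satisfy $L_{j} u = f_{j} \in \E_{\M}(\T^{N+1})$ for every $j$. Then $\tilde{L}_{j}(Tu) = T(L_{j}u) = Tf_{j}$, and $Tf_{j} \in \E_{\M}(\T^{N+1})$ because $T$ restricts to an automorphism of $\E_{\M}(\T^{N+1})$ by Theorem \ref{Butch Cassidy and the Sundance Kid}; moreover $Tu \in \D_{\M}'(\T^{N+1})$. By hypothesis $Tu \in \E_{\M}(\T^{N+1})$, and applying the inverse automorphism $T^{-1}$ (also an automorphism of $\E_{\M}(\T^{N+1})$) we conclude $u = T^{-1}(Tu) \in \E_{\M}(\T^{N+1})$; hence \eqref{It's a Wonderful Life} is globally $\M$-hypoelliptic. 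The converse is the mirror image: from $T L_{j} = \tilde{L}_{j} T$ one gets $L_{j} T^{-1} = T^{-1} \tilde{L}_{j}$, so if $u \in \D_{\M}'(\T^{N+1})$ solves $\tilde{L}_{j} u = g_{j} \in \E_{\M}(\T^{N+1})$, then $L_{j}(T^{-1}u) = T^{-1} g_{j} \in \E_{\M}(\T^{N+1})$, whence $T^{-1}u \in \E_{\M}(\T^{N+1})$ by global $\M$-hypoellipticity of \eqref{It's a Wonderful Life}, and therefore $u = T(T^{-1}u) \in \E_{\M}(\T^{N+1})$.

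I expect the only real obstacle to be the rigorous justification of the intertwining identity at the level of ultradistributions, namely checking that commuting $\partial_{t_{j}}$ past the partial Fourier coefficient map, inserting and extracting the $t$-dependent factor $e^{i\eta A(t)}$, and multiplying coefficients by $a_{j}(t_{j})$ are all legitimate operations in the Denjoy--Carleman setting. This is exactly where the partial Fourier series machinery of Section \ref{The Phantom Menace}, the closure of $\E_{\M}$ under products and composition, and the estimates of Proposition \ref{Modern Times} come into play; once these are in place, the rest of the argument is purely formal.
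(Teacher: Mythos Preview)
Your proposal is correct and follows exactly the paper's approach: the paper's proof consists of the single line ``It follows immediately from Theorem~\ref{Butch Cassidy and the Sundance Kid} and the fact that $\tilde{L}_{j} = T \circ L_{j} \circ T^{-1}$,'' which is precisely your intertwining identity $T \circ L_{j} = \tilde{L}_{j} \circ T$ rewritten. You have simply supplied the mode-by-mode verification and the standard transfer-of-hypoellipticity argument that the paper leaves implicit.
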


\begin{proof}
It follows immediately from Theorem \ref{Butch Cassidy and the Sundance Kid} and the fact that $\tilde{L}_{j} =  T \circ L_{j} \circ T^{-1}.$ 
\end{proof}

\begin{minipage}[b]{7cm}
	{\bf Alexandre Kirilov}\\
	Departament of Mathematics\\ 
	Federal University of Parana\'a\\
	Curitiba, PR 82590-300, Brazil\\
	E-mail: {\it alexandrekirilov@gmail.com}
\end{minipage}
\hfill
\begin{minipage}[b]{7cm}
	{\bf Bruno de Lessa Victor}\\
	Departament of Mathematics\\ 
	Federal University of Parana\'a\\
	Curitiba, PR 82590-300, Brazil\\
	E-mail: {\it brunodelessa@gmail.com}
\end{minipage}

\end{document}